\newtheorem{theorem}{Theorem}[section]
\newtheorem{lemma}{Lemma}[section]
\newtheorem{remark}{Remark}[section]
\newcommand{\abs}[1]{|#1|^2}
\def\XXint#1#2#3{{\setbox0=\hbox{$#1{#2#3}{\int}$ }
\vcenter{\hbox{$#2#3$ }}\kern-.6\wd0}}
\newtheorem{prop}{Proposition}[section]
\newtheorem{defn}{Definition}[section]
\newtheorem{corr}{Corollary}[section]
\newcommand{\ric}{\mathrm{Ric}}
\newcommand{\innpro}[1]{\langle#1\rangle}
\newcommand{\bk}[1]{\Big(#1\Big)}
\newcommand{\ddbar}{i\partial\bar\partial}
\newcommand{\pe}{\varphi^{(r)}_{\epsilon,\gamma}}
\numberwithin{equation}{section}
\begin{document}
\title{On the K\"ahler Ricci flow on projective manifolds of general type}\author{Bin Guo}\address{Department of Mathematics, Rutgers University, Piscataway, NJ, 08854}\email{bguo@math.rutgers.edu}
\maketitle

\begin{abstract}
We consider the K\"ahler Ricci flow on a smooth minimal model of general type, we show that if the Ricci curvature is uniformly bounded below along the K\"ahler-Ricci flow, then the diameter is uniformly bounded. As a corollary we show that under the Ricci curvature lower bound assumption, the Gromov-Hausdorff limit of the flow is homeomorphic to the canonical model. Moreover, we can give a purely analytic proof of a recent result of Tosatti-Zhang (\cite{TZ}) that if the canonical line bundle $K_X$ is big and nef, but not ample, then the flow is of Type IIb. 
\end{abstract}

\section{Introduction}
The Ricci flow (\cite{H}) has been one of the most powerful tools in geometric analysis with remarkable applications to the study of $3$-manifolds. The complex analogue, the K\"ahler-Ricci flow, has been used by Cao (\cite{C}) to give an alternative proof of the existence of K\"ahler Einstein metrics on manifolds with negative or vanishing first Chern class (\cite{Y,Au}). Tsuji \cite{Ts} applied the K\"ahler-Ricci flow to construct a singular K\"ahler Einstein metrics on smooth minimal manifolds of general type. The analytic Minimal Model program, introduced in \cite{ST2, ST3}, aims to find the minimal model of an algebraic variety, by running the K\"ahler-Ricci flow. It is conjectured (\cite{ST3}) that the K\"ahler-Ricci flow will deform a given projective variety to its minimal model and eventually to its canonical model coupled with a canonical metric of Einstein type, in the sense of Gromov-Hausdorff.

Let $X$ be a projective $n$-dimensional manifold, with the canonical bundle $K_X$ big and nef. We consider the K\"ahler-Ricci flow
\begin{equation}\label{KRF}
\frac{\partial \omega}{\partial t}  = -\ric(\omega) - \omega, \quad \omega(0) = \omega_0,
\end{equation}
where $\omega_0$ is a K\"ahler metric on $X$. It's well-known that the equation \eqref{KRF} is equivalent to the following complex Monge-Ampere equation
\begin{equation}\label{KRFMA}
\left\{\begin{aligned}\frac{\partial\varphi}{\partial t} &= \log \frac{(\chi + e^{-t}(\omega_0 - \chi) + \ddbar \varphi)^n}{\Omega} - \varphi\\
\varphi(0) &= 0,
\end{aligned}\right.
\end{equation}
where $\Omega$ is a smooth volume form, $\chi = \ddbar\log \Omega\in c_1(K_X) = - c_1(X)$, and $\omega(t) = \chi + e^{-t}(\omega_0 - \chi) + \ddbar \varphi$. It's also well-known (\cite{Ts, TZh}) that the equation \eqref{KRFMA} has long time existence, if $K_X$ is nef. We will prove the following result:
\begin{theorem}\label{thm:1}
Let $X$ be a projective manifold with $K_X$ big and nef. If along the K\"ahler Ricci flow \eqref{KRF}, the Ricci curvature is uniformly bounded below for any $t\ge 0$, i.e., $$\ric(\omega(t))\ge -K\omega(t),$$ for some $K>0$,
then there is a constant $C >0$ such that the diameter of $(X,\omega(t))$ remain bounded, i.e., $$\mathrm{diam}(X,\omega(t))\le C.$$
\end{theorem}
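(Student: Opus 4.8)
The plan is to reduce the diameter bound to a uniform \emph{non-collapsing} estimate along the flow and then run a covering argument. First I would collect the known uniform estimates for \eqref{KRFMA}: by the parabolic estimates of Tian--Zhang and Song--Tian one has $\|\varphi(t)\|_{L^\infty(X)}\le C$ and $\dot\varphi(t)\le C$ uniformly in $t\ge 0$, so that $\omega(t)^n=e^{\dot\varphi+\varphi}\Omega\le C\Omega$, and the total volumes satisfy $v_0\le V(t):=\int_X\omega(t)^n\le V_0$ for all $t$ (this is also transparent from $[\omega(t)]^n=(e^{-t}[\omega_0]+(1-e^{-t})c_1(K_X))^n\to c_1(K_X)^n=\mathrm{vol}(K_X)>0$, using that $K_X$ is big). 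The hypothesis $\ric(\omega(t))\ge -K\omega(t)$ lets me invoke, for each fixed $t$, the Bishop--Gromov relative volume comparison: for $0<r_1<r_2$ and any $x\in X$, $\mathrm{Vol}_{\omega(t)}(B_{\omega(t)}(x,r_1))\ge \frac{V_{-K}(r_1)}{V_{-K}(r_2)}\,\mathrm{Vol}_{\omega(t)}(B_{\omega(t)}(x,r_2))$, where $V_{-K}(r)$ is the volume of an $r$-ball in the $2n$-dimensional space form of curvature $-K$.

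The crux is the following uniform non-collapsing: there exist $r_0>0$ and $\eta>0$, \emph{independent of $t$}, with $\mathrm{Vol}_{\omega(t)}(B_{\omega(t)}(x,r_0))\ge \eta$ for every $x\in X$ and $t\ge 0$. On a fixed compact subset of $X\setminus E$, where $E\subset X$ is the exceptional locus of the birational morphism $\pi\colon X\to X_{\mathrm{can}}$ given by $|mK_X|$, the metrics $\omega(t)$ converge in $C^\infty_{\mathrm{loc}}$ to the singular K\"ahler--Einstein metric $\omega_{KE}$ with $\ric(\omega_{KE})=-\omega_{KE}$, and this smooth convergence yields the estimate there uniformly in $t$. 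The real difficulty, which I expect to be the main obstacle, is to make the non-collapsing uniform \emph{up to $E$}: there $\omega(t)$ genuinely degenerates (indeed the flow is of Type IIb when $K_X$ is not ample), so no smooth convergence is available. Here I would combine the volume-form bound $\omega(t)^n\le C\Omega$ with the Bishop--Gromov inequality above to propagate volume lower bounds between scales, the missing ingredient being a uniform Sobolev (equivalently $\kappa$-noncollapsing) inequality along \eqref{KRF}, which I would extract from uniform control of Perelman's entropy for the flow together with the Ricci lower bound. Granting the single-scale bound $\eta$, Bishop--Gromov upgrades it to $\mathrm{Vol}_{\omega(t)}(B_{\omega(t)}(x,r))\ge \frac{V_{-K}(r)}{V_{-K}(r_0)}\,\eta=:\eta(r)>0$ for all $r\le r_0$, uniformly in $t$.

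Finally I would conclude by covering. Fix $t\ge 0$ and let $\{x_1,\dots,x_N\}\subset X$ be a maximal $r_0$-separated set for $\omega(t)$; the balls $B_{\omega(t)}(x_i,r_0/2)$ are pairwise disjoint, each of volume $\ge \eta(r_0/2)$, so $N\cdot\eta(r_0/2)\le V(t)\le V_0$ and thus $N\le V_0/\eta(r_0/2)$, a bound independent of $t$. By maximality the balls $B_{\omega(t)}(x_i,r_0)$ cover $X$, and since $X$ is connected the nerve of this cover is connected; hence any two points of $X$ are joined by a chain of at most $N$ of these balls, which gives $\mathrm{diam}(X,\omega(t))\le 2r_0 N\le 2r_0 V_0/\eta(r_0/2)=:C$, uniformly in $t\ge 0$. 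In summary, everything rests on the uniform non-collapsing of Step two near the exceptional locus, where the Ricci lower bound is used decisively both in Bishop--Gromov comparison and in the entropy/Sobolev estimate that rules out local collapsing.
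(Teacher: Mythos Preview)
Your covering argument in the last paragraph is fine once you have the uniform non-collapsing, but the non-collapsing step itself is a genuine gap, not a routine input. Perelman's entropy monotonicity yields $\kappa$-noncollapsing on balls where the \emph{scalar curvature} is bounded above; a Ricci lower bound alone gives only $R\ge -2nK$, not $R\le C$, so you cannot feed your hypothesis directly into the noncollapsing theorem. If you invoke Kawamata's semi-ampleness and then the Song--Tian scalar curvature bound, you do get $|R|\le C$ (and hence two-sided Ricci bounds), and then a uniform Sobolev inequality along the normalized flow in the spirit of Q.~Zhang would give the noncollapsing you need; but none of this is in your write-up, and it already uses the deep algebraic input the paper is trying to avoid. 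A related slip: $\|\varphi(t)\|_{L^\infty}\le C$ is \emph{not} known from Tian--Zhang when $K_X$ is only big and nef --- the available lower bound is $\varphi\ge \delta\log|\sigma_D|^2-C_\delta$ (Lemma~\ref{lemma 1}(ii)), and the paper's Remark after Theorem~\ref{thm:1} explicitly says the $L^\infty$ bound would simplify matters but requires semi-ampleness. (For your argument you only need $\omega(t)^n\le C\Omega$, which does follow from the upper bounds on $\varphi,\dot\varphi$, so this is not fatal.)

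The paper's proof takes a completely different route and never establishes uniform non-collapsing at every point. Instead it (i) uses Tian--Wang's almost-Einstein structure theory to take a pointed Gromov--Hausdorff limit $(X_\infty,d_\infty,p_\infty)$ based at a fixed regular point; (ii) proves a precise metric estimate $\pi^*\omega(t)\le C|\sigma_E|^{-2(1-\delta)}|w|^{-2\lambda}\tilde\omega$ on a blow-up, which yields a ``finite distance'' lemma: every $q\in D$ can be reached from $X\setminus D$ by a curve of $\omega(t)$-length bounded uniformly in $t$; (iii) builds an injective Lipschitz map $\Phi_\infty:X_\infty\to X_{can}$ from the uniform bounds on $|\sigma|_{h_t^m}$ and $|\nabla_t\sigma|_{h_t^m}$; and (iv) argues by contradiction --- if diameters blow up then $\Phi_\infty$ misses a point of $X_{can}$, but the finite-distance lemma forces any approach to that point to stay at bounded $d_\infty$-distance from $p_\infty$, contradicting injectivity. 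In short, the paper replaces your global volume non-collapsing by a pointwise metric control near the exceptional set (step (ii)) together with the algebraic map $\Phi_\infty$; your approach would be more elementary if the non-collapsing could be obtained, but as written that step is exactly where the difficulty of the theorem lives.
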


%\begin{remark}
%By the work of Tian-Wang \cite{TiWa}, if the Ricci curvature is uniformly bounded below, then for any sequence $t_i\to \infty$, $(X, g(t_i),p)$ is a sequence of almost K\"ahler Einstein manifolds (see the Appendix). So each point in the limit space $(X_\infty, d_\infty)$ has a good tangent cone since the singular set $\mathcal S$ has codimension strictly greater than $2$.  Our proof  applies almost verbatim to this case, so Theorem \ref{thm:1} also holds under the assumption that Ricci curvature is bounded below. 
%\end{remark}

\begin{remark}
If we use Kawamata's theorem (\cite{Ka}) that the nef and big canonical line bundle $K_X$ is semi-ample, by \cite{Zh, ST4} the scalar curvature along the K\"ahler Ricci flow \eqref{KRF} is uniformly bounded, hence Ricci curvature lower bound implies that Ricci curvature is uniformly bounded on both sides. Then in the proof of Theorem \ref{thm:1}, we can use Cheeger-Colding-Tian (\cite{CCT}) theory to identify the regular sets. Moreover, if $K_X$ is semi-ample and big, then the $L^\infty$ bound of $\varphi$ in \eqref{KRFMA} will simplify the proof. However, following Song's (\cite{S2}) recent  analytic proof of base point freeness for nef and big $K_X$, our proof of Theorem \ref{thm:1} does not rely on Kawamata's theorem.
%So in Theorem \ref{thm:1}, we can only assume Ricci curvature lower bound. Since we do not assume Kawamata's theorem in our proof, we put the both sides bound of Ricci curvature instead. 
\end{remark}

It is conjectured by Song-Tian in \cite{ST3} that the K\"ahler Ricci flow \eqref{KRF} will converge to the the canonical model of $X$ coupled with the unique K\"ahler Einstein current with bounded potential, in the Gromov-Hausdorff sense. Under the assumption that the Ricci curvature is uniformly bounded below, we can partially confirm this conjecture.

\begin{corr}\label{cor:1}
Under the same assumptions as Theorem \ref{thm:1}, then as $t\to\infty$,
\begin{equation*}
(X,\omega(t)) \xrightarrow{d_{GH}} (X_\infty, d_\infty),
\end{equation*}
the limit space $X_\infty$ is homeomorphic to the canonical model $X_{can}$ of $X$. Moreover, $(X_\infty, d_\infty)$ is isometric to the metric completion of $(X^\circ_{can}, g_{KE})$, where $g_{KE}$ is the unique K\"ahler-Einstein current  with bounded local potentials and $X_{can}^\circ$ is the regular part of $X_{can}$.
\end{corr}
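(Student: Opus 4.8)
The plan is to deduce the corollary from Theorem~\ref{thm:1} by combining the uniform diameter bound with Gromov's precompactness theorem and the known local--smooth convergence of the flow to the K\"ahler--Einstein current, and then to show that \emph{every} Gromov--Hausdorff subsequential limit is isometric to one fixed model space.

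First, since $\ric(\omega(t))\ge -K\omega(t)$ for all $t\ge0$ by hypothesis and $\mathrm{diam}(X,\omega(t))\le C$ by Theorem~\ref{thm:1}, Gromov's precompactness theorem shows that $\{(X,\omega(t))\}_{t\ge0}$ is precompact in the Gromov--Hausdorff topology: along any sequence $t_i\to\infty$ some subsequence satisfies $(X,\omega(t_i))\xrightarrow{d_{GH}}(Y,d_Y)$ for a compact length space $(Y,d_Y)$. It therefore suffices to prove that every such $(Y,d_Y)$ is isometric to the metric completion of $(X_{can}^\circ,g_{KE})$ and homeomorphic to $X_{can}$; uniqueness of the limit, and hence genuine (non-subsequential) convergence, then follows automatically.

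Next, let $\pi\colon X\to X_{can}$ be the birational morphism onto the canonical model, and set $U:=\pi^{-1}(X_{can}^\circ)\setminus\mathrm{Exc}(\pi)$, a Zariski-open subset of $X$ on which $\pi$ is a biholomorphism onto an open dense subset of $X_{can}^\circ$ whose complement in $X_{can}^\circ$ has measure zero. By the construction of the K\"ahler--Einstein current and its role as the limit of \eqref{KRF} (Tsuji~\cite{Ts}, Tian--Zhang~\cite{TZh}; see also Song--Tian~\cite{ST2,ST3} and Song~\cite{S2}), the solution $\varphi(t)$ of \eqref{KRFMA} converges in $C^\infty_{loc}(U)$ to $\varphi_\infty$, where $\omega_{KE}=\chi+\ddbar\varphi_\infty=\pi^\ast g_{KE}$ is the unique K\"ahler--Einstein current with bounded local potentials; equivalently $\omega(t)\to\omega_{KE}$ in $C^\infty_{loc}(U)$, and on each $V\Subset U$ the metrics $\omega(t)$ and $\omega_{KE}$ are uniformly bi-Lipschitz. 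Exhausting $U$ by such $V$, and using in addition a volume-comparison/segment argument (legitimate thanks to the Ricci lower bound) to show that for every $\delta>0$ the $\omega(t)$-diameter of the $\delta$-neighbourhood of $X\setminus U$ is at most some $\varepsilon(\delta)\to0$ independent of $t$ --- a quantitative fact essentially already present in the proof of Theorem~\ref{thm:1} --- one obtains an isometric embedding of $(U,\omega_{KE}|_U)$ onto an open dense subset of full measure of $(Y,d_Y)$ on which $d_Y$ induces the length metric of $g_{KE}$. Since $\pi$ identifies $U$ with a Zariski-open subset of $X_{can}^\circ$ having measure-zero complement, the two spaces have the same metric completion, so $(Y,d_Y)$ is exactly the metric completion of $(X_{can}^\circ,g_{KE})$, which is the last assertion of the corollary.

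Finally, to identify this completion with the variety $X_{can}$: because $X_{can}$ has at worst canonical (hence klt) singularities and the local potentials of $g_{KE}$ are globally bounded, the standard local estimates for bounded solutions of degenerate complex Monge--Amp\`ere equations on such spaces (in the spirit of Ko\l odziej, Eyssidieux--Guedj--Zeriahi, and of the estimates underlying Song's analytic base-point-freeness argument~\cite{S2}) show that near each point of $X_{can}$ the $g_{KE}$-distance is comparable to the Euclidean distance in a local embedding; hence the identity on $X_{can}^\circ$ extends to a homeomorphism from the metric completion of $(X_{can}^\circ,g_{KE})$ onto $X_{can}$, so $(X_\infty,d_\infty)\cong X_{can}$ topologically. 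The main obstacle is the second step: one must control $\omega(t)$ uniformly in $t$ on a shrinking neighbourhood of the exceptional locus of $\pi$ and thereby rule out the creation of any ``extra'' points in the limit beyond those of the metric completion. This is precisely where the Ricci lower bound and the diameter estimate of Theorem~\ref{thm:1} must be used in tandem --- essentially re-running the cut-off and volume-comparison arguments of that proof while keeping track of the limit metric $g_{KE}$ --- and it is the reason the corollary is not a formal consequence of the diameter bound alone.
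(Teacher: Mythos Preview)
Your outline has a genuine error in the second step, and it also bypasses the mechanism the paper actually uses.

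The claim that ``for every $\delta>0$ the $\omega(t)$-diameter of the $\delta$-neighbourhood of $X\setminus U$ is at most some $\varepsilon(\delta)\to0$ independent of $t$'' is false in general. If $X_{can}$ has non-isolated singularities (say a curve $C$ of singular points, which is certainly allowed when $n\ge 3$), then $X\setminus U$ surjects onto $C$, and in the limit $(X_\infty,d_\infty)\cong X_{can}$ the set $C$ has positive $d_\infty$-diameter. Hence the $\omega(t)$-diameter of any neighbourhood of $X\setminus U$ stays bounded below by roughly $\mathrm{diam}_{d_\infty}(C)>0$ and cannot tend to zero. This is also not what is proved in Theorem~\ref{thm:1} or in Lemma~\ref{finite distance}: the latter only shows that \emph{each individual point} $q\in D$ is joined to the regular part by a curve of uniformly small $g(t)$-length, which is a much weaker, pointwise statement. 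Without the diameter-collapse claim your embedding argument does not rule out ``extra'' limit points, and the identification $(Y,d_Y)\cong\overline{(X_{can}^\circ,g_{KE})}$ breaks down.

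The paper proceeds quite differently and avoids this issue entirely. It does not try to control the metric near the exceptional locus directly; instead it builds a Lipschitz map $\Phi_\infty:(X_\infty,d_\infty)\to (X_{can},\omega_{FS})$ as the Gromov--Hausdorff limit of the pluricanonical maps $\Phi_m$, using the uniform gradient bounds on sections (Proposition~\ref{prop:1}). Injectivity of $\Phi_\infty$ comes from Donaldson--Sun/Tian--Wang style partial $C^0$ estimates (peak sections on good tangent cones), and surjectivity is a short argument using only compactness of $(X_\infty,d_\infty)$ from the diameter bound plus density of $\Phi_\infty(\mathcal R)$. The homeomorphism $X_\infty\cong X_{can}$ then follows because $\Phi_\infty$ is a continuous bijection from a compact space, and the isometry with $\overline{(\mathcal R_X,g_\infty)}$ comes from the identification $\mathcal R=\mathcal R_X$ established in Section~3. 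Your final paragraph gestures at the right difficulty but tries to resolve it with local Monge--Amp\`ere estimates; what is actually needed is the algebraic input of separating points by global holomorphic sections, which is exactly what the partial $C^0$ estimate provides.
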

%\begin{remark}
%It is a well-known theorem of Kawamata that for a projective variety with big and nef canonical line bundle $K_X$,  $K_X$ is semi-ample, i.e., $mK_X$ is base point free for large $m\in\mathbb N$. In a recent paper of Song (\cite{S2}), he gives a purely analytic proof of this theorem using K\"ahler Ricci flow and Cheeger-Colding theory. Using Song's techniques, our proof of Theorem \ref{thm:1} is free of Kawamata's base point free theorem, and the main difference of with and without Kawamata is that the K\"ahler potential $\varphi$ in \eqref{KRFMA} may not have a uniform $L^\infty$ bound apriorily (see Lemma \ref{lemma 1}).
%\end{remark}

Consider the unnormalized K\"ahler Ricci flow \begin{equation}\label{eqn:ukrf}\frac{\partial}{\partial t}\omega = -\ric(\omega), \quad \omega(0) = \omega_0,\end{equation} with long time existence. The flow \eqref{eqn:ukrf} is called to be of Type III, if $$\sup_{X\times [0,\infty)} t |Rm|(x,t)<\infty,$$ otherwise it is of Type IIb, here $|Rm|(\omega(t))$ denotes the Riemann curvature of $\omega(t)$. It's well-known that Type III condition is equivalent to the curvature is uniformly bounded along the normalized K\"ahler Ricci flow \eqref{KRF}. As a by-product of our proof of Theorem \ref{thm:1}, we obtain a purely analytic proof of a recent result of Tosatti-Zhang (\cite{TZ}), namely,
\begin{theorem}\label{TZ}\cite{TZ}
Let $X$ be a projective manifold with $K_X$ big and nef, if the K\"ahler Ricci flow \eqref{KRF} is of type III, then the canonical line bundle $K_X$ is ample.
\end{theorem}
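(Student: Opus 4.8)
\emph{Proof proposal.} The plan is to argue by contradiction. As recalled in the text, the unnormalized flow \eqref{eqn:ukrf} is of Type III if and only if the curvature is uniformly bounded along the normalized flow \eqref{KRF}, say $|Rm|(\omega(t))\le\Lambda$ for all $t\ge0$; in particular $\ric(\omega(t))\ge-\Lambda\,\omega(t)$, so the hypotheses of Theorem \ref{thm:1} hold, the diameters stay bounded, and by Corollary \ref{cor:1} the Gromov--Hausdorff limit is homeomorphic to $X_{can}$. Rather than arguing through the limit space, I would extract the contradiction directly on $X$ from a single curve whose $\omega(t)$-area collapses while the ambient curvature remains bounded. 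So assume, towards a contradiction, that $K_X$ is not ample.

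Using the analytic base-point-free theorem for big and nef $K_X$ (Song \cite{S2}; this is what keeps the argument ``purely analytic'' and avoids \cite{Ka}), fix $m\gg1$ with $|mK_X|$ base-point-free, and let $\pi\colon X\to X_{can}$ be the resulting birational morphism onto the normal projective variety $X_{can}$, so that $\pi^*A=mK_X$ for an ample line bundle $A$ on $X_{can}$. Since $K_X$ is not ample, $\pi$ is not an isomorphism, hence contracts a positive-dimensional subvariety; by the standard fact that the positive-dimensional fibres of a birational morphism from a smooth projective variety are covered by rational curves, there is an irreducible rational curve $C\subset X$ with $\pi(C)$ a point, so $K_X\cdot C=0$. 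Since $\omega(t)=\chi+e^{-t}(\omega_0-\chi)+\ddbar\varphi$ with $\chi\in c_1(K_X)$, and $\int_C\ddbar\varphi=0=\int_C\chi$ (the latter because $K_X\cdot C=0$),
\[ \int_C\omega(t)\;=\;e^{-t}\!\int_C(\omega_0-\chi)\;=\;e^{-t}\!\int_C\omega_0\;\xrightarrow[\,t\to\infty\,]{}\;0 , \]
i.e.\ the area of $C$ in $(X,\omega(t))$ decays exponentially.

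The heart of the argument is then a Gauss--Bonnet estimate. Let $\nu\colon\mathbb{P}^1\to C$ be the normalization and $f\colon\mathbb{P}^1\to X$ the induced nonconstant holomorphic map, so $f_*[\mathbb{P}^1]=[C]$ and $\int_{\mathbb{P}^1}f^*\omega(t)=\int_C\omega(t)\to0$. The form $f^*\omega(t)$ is a smooth nonnegative $(1,1)$-form on $\mathbb{P}^1$, a genuine smooth metric away from the finitely many ramification points of $f$, near which it has a conical singularity with cone angle $2\pi k_j$, $k_j\ge1$ an integer. By the Gauss equation for the holomorphic (hence minimal) curve $f$, the Gaussian curvature of $f^*\omega(t)$ is bounded above by the sectional curvature of $\omega(t)$ along the tangent plane of $f$, hence by $c_n\Lambda$ for a dimensional constant $c_n$. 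By Gauss--Bonnet with conical points, $\int_{\mathbb{P}^1}K_{f^*\omega(t)}\,dA_{f^*\omega(t)}=4\pi+\sum_j 2\pi(k_j-1)\ge4\pi$ since each $k_j\ge1$. Combining,
\[ 4\pi\;\le\;\int_{\mathbb{P}^1}K_{f^*\omega(t)}\,dA_{f^*\omega(t)}\;\le\;c_n\Lambda\!\int_{\mathbb{P}^1}f^*\omega(t)\;=\;c_n\Lambda\,e^{-t}\!\int_C\omega_0\;\xrightarrow[\,t\to\infty\,]{}\;0 , \]
which is absurd. Therefore $K_X$ is ample.

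I expect the only real technical point to be making the Gauss equation and the Gauss--Bonnet formula rigorous at the ramification points of $f$: one excises small disks around them, applies the smooth Gauss--Bonnet theorem on the complement, and lets the radii tend to zero, the boundary geodesic-curvature terms converging to the conical contributions; one only uses that the cone angles are $\ge2\pi$, which holds because $f$ is holomorphic. A variant closer to the proof of Theorem \ref{thm:1}: with $|Rm|(\omega(t))\le\Lambda$, the bounded diameter, the Ricci lower bound, and $\int_X\omega(t)^n\to\int_X c_1(K_X)^n>0$, Bishop--Gromov gives uniform non-collapsing, hence $(X,\omega(t))$ subconverges in the smooth Cheeger--Gromov topology to a closed manifold diffeomorphic to $X$, which together with Corollary \ref{cor:1} would force $X\cong X_{can}$ --- again impossible unless $\pi$ is an isomorphism. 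I would nonetheless present the shrinking-rational-curve argument as the main proof, being the shortest and requiring neither non-collapsing nor the structure of the limit space.
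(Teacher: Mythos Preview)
Your Gauss--Bonnet argument on a shrinking rational curve is correct, and the handling of cone points at ramification is fine (the pullback metric has the form $|z|^{2(k-1)}e^{2\psi}|dz|^2$ with $\psi$ smooth, so $K\,dA$ extends across the cone points and the conical Gauss--Bonnet formula applies). But it is a genuinely different route from the paper's, and---more to the point---it is \emph{not} purely analytic. The step ``positive-dimensional fibres of a birational morphism from a smooth projective variety are covered by rational curves'' is Hacon--McKernan's resolution of Shokurov's rational connectedness conjecture; since $K_X$ is $\pi$-trivial here rather than $\pi$-negative, Mori's cone theorem does not supply the rational curve, and Nakai--Moishezon only yields \emph{some} curve $C$ with $K_X\cdot C=0$, which is useless for Gauss--Bonnet unless it has genus zero. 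So while you carefully replaced Kawamata's base-point-free theorem by Song's analytic version, you reintroduced birational geometry of comparable depth through the back door. This is exactly what the paper sets out to avoid, and your main argument is in spirit the original Tosatti--Zhang approach.

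The paper's proof instead stays inside the analytic framework of Sections~3--4: with $|Rm|$ bounded and Shi's estimates, the Gromov--Hausdorff limit $(X_\infty,d_\infty)$ is a smooth manifold, so $\mathcal S=\mathcal S_X=\emptyset$; the finite-distance estimate (Corollary~\ref{corollary finite}) then forces $\mathcal R_X=X$, since any $q\in X\setminus\mathcal R_X$ would survive to a point of $\mathcal S_X$. Hence $\omega(t)\to\omega_\infty$ smoothly on all of $X$, producing a genuine K\"ahler--Einstein metric with $\ric=-\omega_\infty$, and $K_X$ is ample. No rational curves, no Corollary~\ref{cor:1}, no topological comparison of $X$ with $X_{can}$. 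Your variant at the end (smooth Cheeger--Gromov limit plus Corollary~\ref{cor:1} forcing $X\cong X_{can}$) is correct and much closer to the paper's spirit; if you want a purely analytic proof, lead with that, not with the shrinking sphere.
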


Throughout this paper, the constants $C$ may be different from lines to lines, but they are all uniform. We also use $g$ as the associated Riemannian metric of a K\"ahler form $\omega$, for example the metric space $(X,\omega(t))$ means the space $(X,g(t))$.
\medskip

\noindent{\bf Acknowledgement}: The author would like to thank his advisor Prof. J. Song, for his constant help, support and encouragement over the years. He also wants to thank the  members of the complex geometry and PDE seminar at Columbia University, from whom he learns a lot. His thanks also goes to Prof. V. Tosatti for his careful reading of a preprint of this paper and many helpful suggestions which made this paper clearer. Finally he likes to thank Prof. X. Wang and V. Datar for their interest and helpful discussions.

%%%%%%%%%%%%%%%%%%%%new section
\section{Preliminaries}
In this section, we will recall some definitions and theorems we will use in this note.
\begin{defn}
Let $L\to X$ be a holomorphic line bundle over a projective manifold $X$. $L$ is said to be semi-ample if the linear system $|kL|$ is base point free for some $k\in\mathbb Z^+$. $L$ is said to be big if the Iitaka dimension of $L$ is equal to the dimension of $X$. $L$ is called numerically effective (nef) if $L\cdot C\ge 0$ for any irreducible curve $C\subset X$.
\end{defn}

We can define a semi-group $$\mathcal F(X,L) = \{k\in\mathbb Z^+| kL\text{ is base point free}\}.$$ For any $k\in \mathcal F(X,L)$, the linear system $|kL|$ induces a morphism $$\Phi_k = \Phi_{|kL|}: X\to X_k = \Phi_k(X)\subset \mathbb P^{N_k}$$ where $N_k+1 = \mathrm{dim} H^0(X,kL)$. It's well-known that (\cite{L}) that for large enough $k,l\in \mathcal F(X,L)$, $(\Phi_k)_* \mathcal O_X = \mathcal O_{X_k}$, $\Phi_k = \Phi_l$ and $X_k = X_l$.

We will need the following version of $L^2$ estimates due to Demailly (\cite{De})

\begin{theorem}\label{thm:L2}
Suppose $X$ is an $n$-dimensional projective manifold equipped with a smooth K\"ahler metric $\omega$. Let $L$ be a holomorphic line bundle over $X$ equipped with a possibly singular hermitian metric $h$ such that $\ric(h) +\ric(\omega)\ge \delta \omega$ in the current sense for some $\delta>0$. Then for any $L$-valued $(0,1)$-form $\tau$ satisfying 
$$\bar\partial \tau = 0,\quad \int_X \abs{\tau}_{h,\omega}\omega^n<\infty,$$ there exists a smooth section $u$ of $L$ such that $\bar\partial u = \tau$ and $$\int_X \abs{u}_{h,\omega}\omega^n\le \frac{1}{2\pi\delta}\int_X \abs{\tau}_{h,\omega}\omega^n.$$
\end{theorem}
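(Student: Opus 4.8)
\noindent\emph{Proof proposal.} The plan is to recognize the statement as the classical Bochner--Kodaira--Nakano $L^{2}$ existence theorem for $\dbar$, applied to $L$ twisted by $K_{X}^{-1}$, and then to remove the smoothness assumption on $h$ by regularization.

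First I would pass from $L$-valued $(0,q)$-forms to $(n,q)$-forms with values in $F:=L\otimes K_{X}^{-1}$, via the canonical isomorphism $\Lambda^{0,q}T^{*}X\cong\Lambda^{n,q}T^{*}X\otimes K_{X}^{-1}$: under it, sections of $L$ correspond to $F$-valued $(n,0)$-forms, the equation $\dbar u=\tau$ becomes $\dbar\tilde u=\tilde\tau$ with $\tilde u$ an $F$-valued $(n,0)$-form and $\tilde\tau$ an $F$-valued $(n,1)$-form, and the $L^{2}$ norms $\int_{X}\abs{\,\cdot\,}_{h,\omega}\omega^{n}$ are unchanged once $F$ carries the metric $h_{F}=h\otimes h_{K_{X}^{-1}}$ with $h_{K_{X}^{-1}}$ induced by $\omega$. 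The Chern curvature of $(F,h_{F})$ equals $i\Theta(L,h)+i\Theta(K_{X}^{-1},h_{K_{X}^{-1}})=2\pi\big(\ric(h)+\ric(\omega)\big)$ in the normalization $c_{1}=[\tfrac{i}{2\pi}\Theta]$, so the hypothesis reads $i\Theta(F,h_{F})\ge 2\pi\delta\,\omega$ in the current sense. The problem is thus: solve $\dbar\tilde u=\tilde\tau$ for an $F$-valued $(n,1)$-form $\tilde\tau$ with $\dbar\tilde\tau=0$ and $\tilde\tau\in L^{2}$, over the compact K\"ahler $(X,\omega)$, with curvature $\ge 2\pi\delta\,\omega$.

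Assume first that $h$, hence $h_{F}$, is smooth. Then I would run the standard argument: on $F$-valued $(n,1)$-forms $\beta$ the Bochner--Kodaira--Nakano inequality gives
$$\|\dbar\beta\|^{2}+\|\dbar^{*}\beta\|^{2}\ \ge\ \langle[i\Theta(F,h_{F}),\Lambda]\beta,\beta\rangle\ \ge\ 2\pi\delta\,\|\beta\|^{2},$$
the last step because on $(n,1)$-forms the curvature operator $[i\Theta,\Lambda]$ is multiplication by the eigenvalues of $i\Theta(F,h_{F})$ relative to $\omega$. Hence $\dbar$ has closed range and the relevant cohomology vanishes, so the minimal-norm solution $\tilde u$ of $\dbar\tilde u=\tilde\tau$ exists and obeys $\|\tilde u\|^{2}\le(2\pi\delta)^{-1}\|\tilde\tau\|^{2}$, i.e. $\int_{X}\abs{u}_{h,\omega}\omega^{n}\le\frac{1}{2\pi\delta}\int_{X}\abs{\tau}_{h,\omega}\omega^{n}$; when $\tau$ is smooth, interior elliptic regularity for $\dbar$ makes $u$ a smooth section of $L$.

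For general, possibly singular, $h$: write $h=h_{0}e^{-\psi}$ with $h_{0}$ a fixed smooth metric on $L$; the hypothesis says $\psi$ is quasi-plurisubharmonic with $\ric(h_{0})+\tfrac{1}{2\pi}\ddbar\psi\ge\delta\omega-\ric(\omega)$ as currents. By Demailly's regularization theorem for quasi-psh functions on the projective manifold $X$, there is a sequence of smooth $\psi_{j}\downarrow\psi$ with $\ric(h_{0})+\tfrac{1}{2\pi}\ddbar\psi_{j}\ge\delta\omega-\ric(\omega)-\epsilon_{j}\omega$ and $\epsilon_{j}\downarrow 0$. Put $h_{j}:=h_{0}e^{-\psi_{j}}$, so $h_{j}$ is smooth, $h_{j}\le h$ pointwise, and $\ric(h_{j})+\ric(\omega)\ge(\delta-\epsilon_{j})\omega$; for $\epsilon_{j}<\delta$ the smooth case yields $u_{j}$ with $\dbar u_{j}=\tau$ and $\int_{X}\abs{u_{j}}_{h_{j},\omega}\omega^{n}\le\frac{1}{2\pi(\delta-\epsilon_{j})}\int_{X}\abs{\tau}_{h_{j},\omega}\omega^{n}\le\frac{1}{2\pi(\delta-\epsilon_{j})}\int_{X}\abs{\tau}_{h,\omega}\omega^{n}$. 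Since $h_{1}\le h_{j}$ for every $j$, the $u_{j}$ are uniformly bounded in the fixed Hilbert space $L^{2}(X,L;h_{1},\omega)$; I extract a weakly convergent subsequence $u_{j}\rightharpoonup u$, so $\dbar u=\tau$ distributionally. For each fixed $m$ and all $j\ge m$ one has $h_{m}\le h_{j}$, hence $\int_{X}\abs{u_{j}}_{h_{m},\omega}\omega^{n}\le\int_{X}\abs{u_{j}}_{h_{j},\omega}\omega^{n}\le\frac{1}{2\pi(\delta-\epsilon_{j})}\int_{X}\abs{\tau}_{h,\omega}\omega^{n}$; weak lower semicontinuity of the $L^{2}(h_{m},\omega)$-norm together with $\epsilon_{j}\to 0$ gives $\int_{X}\abs{u}_{h_{m},\omega}\omega^{n}\le\frac{1}{2\pi\delta}\int_{X}\abs{\tau}_{h,\omega}\omega^{n}$, and letting $m\to\infty$ the monotone convergence theorem (using $h_{m}\uparrow h$) gives the stated bound. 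I expect the one genuinely delicate point to be this regularization step --- ensuring the loss $\epsilon_{j}$ tends to $0$ while keeping $\psi_{j}$ decreasing requires the refined form of Demailly's regularization; alternatively one gives up monotonicity, takes $\psi_{j}\to\psi$ in $L^{1}$ with $\epsilon_{j}\to 0$, and replaces the final monotone-convergence step by a Fatou/diagonal argument, which the weak-compactness scheme tolerates. Everything else is the textbook Bochner technique.
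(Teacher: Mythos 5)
The paper does not actually prove this statement: it is quoted directly from Demailly's paper \cite{De} as a known theorem, so there is no internal proof to compare against. Judged on its own merits, your reduction to $(n,1)$-forms with values in $L\otimes K_X^{-1}$, the Bochner--Kodaira--Nakano a priori estimate in the smooth case, and the weak-compactness plus lower-semicontinuity limiting scheme are all correct and constitute exactly the standard skeleton of Demailly's argument.

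The one genuine gap is the regularization step. You invoke ``Demailly's regularization theorem'' to produce smooth $\psi_j\downarrow\psi$ on the \emph{compact} manifold $X$ with curvature loss $\epsilon_j\omega$, $\epsilon_j\to 0$. No such statement holds for a general quasi-psh $\psi$: Demailly's global regularization on a compact manifold necessarily loses positivity at least on the order of the Lelong numbers of $\psi$ (the smooth approximants satisfy only $\alpha+i\partial\bar\partial\psi_j\ge\gamma-\lambda_j(x)\omega-\epsilon_j\omega$ with $\lambda_j(x)\downarrow\nu(\psi,x)$), and the weights actually used in this paper have log poles, hence positive Lelong numbers, so the loss does not vanish. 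Your proposed fallback ($L^1$ approximation plus a Fatou/diagonal argument) does not repair this, because the problem is the curvature lower bound, not the monotonicity. The standard fix --- and what Demailly does in \cite{De} --- is to remove a hyperplane section $H$ so that $X\setminus H$ is Stein and complete K\"ahler; on a Stein manifold one \emph{can} approximate $\psi$ by a decreasing sequence of smooth quasi-psh functions with curvature loss $\epsilon_j\omega\to 0$, the a priori estimate then passes to the limit there, and the resulting $L^2$ solution extends across $H$ since $H$ is a closed analytic set negligible for locally $L^2$ solutions of $\bar\partial$. Alternatively, for the weights with analytic singularities appearing in this paper one may regularize $\log\abs{\sigma}_{h}$ by $\log(\abs{\sigma}_{h}+1/j)$ and absorb the resulting bounded (not vanishing) curvature loss into $\delta$, which suffices for every application made here. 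With either modification your argument closes.
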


\section{Identify the regular sets}

Since $K_X$ is big and nef, by Kodaira lemma, there exists an effective divisor $D\subset X$, such that $K_X - \varepsilon D$ is ample, hence there exists a hermitian metric on $[D]$ such that $$\chi - \varepsilon \ric(h_D)>0.$$

Let's recall a few known  estimates of the flow, (see \cite{PSS, Ts} or \cite{S1,S2} without assuming that $K_X$ is semi-ample)
\begin{lemma}\label{lemma 1}
\begin{enumerate}[label=(\roman*)]
\item There is a constant $C>0$ such that for any $t\ge 0$, $$\sup_X\varphi(t)\le C, \quad \sup_X \dot \varphi(t)=\sup_X\frac{\partial}{\partial t}\varphi\le C.$$
\item For any $\delta\in (0,1)$, there is a constant $C_\delta>0$ such that $$\varphi\ge \delta \log \abs{\sigma_D}_{h_D} - C_\delta,$$
where $\sigma_D$ is a holomorphic section of the line bundle $[D]$ associated to the divisor $D$.
\item Along the K\"ahler Ricci flow, there exist constants $C>0,\lambda>0$ such that
$$\mathrm{tr}_{\omega_0}\omega(t)\le C|\sigma_D|^{-2\lambda}_{h_D}$$
\item For any compact subset $K\subset X\backslash D$, any $\ell\in \mathbb Z^+$, there exists a constant $C_{\ell, K}>0$ such that
$$\|\varphi\|_{C^\ell(K)}\le C_{\ell,K}.$$
\end{enumerate}
\end{lemma}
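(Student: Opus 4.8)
The plan is to obtain all four items from the parabolic maximum principle applied to \eqref{KRFMA}, as in \cite{PSS, Ts}, with the one modification forced by $K_X$ being only big and nef rather than semi-ample: each extremum must be localised to $X\setminus D$ by twisting the test function with a multiple of $\log\abs{\sigma_D}_{h_D}$, using the single available piece of strict positivity $\chi-\varepsilon\ric(h_D)>0$ from Kodaira's lemma (after shrinking $\varepsilon$ we may assume in addition that $\omega_0-\varepsilon\ric(h_D)>0$, and we normalise $h_D$ so that $\abs{\sigma_D}_{h_D}\le1$). Writing $\omega_t:=\chi+e^{-t}(\omega_0-\chi)$, so that $\omega(t)=\omega_t+\ddbar\varphi$, I would first record two elementary facts used throughout: $\omega_t^n\le C\,\Omega$ on $X$ for all $t\ge0$ (expand; each monomial carries a factor $e^{-kt}\le1$), and $\eta_t:=\omega_t-\varepsilon\ric(h_D)=(1-e^{-t})(\chi-\varepsilon\ric(h_D))+e^{-t}(\omega_0-\varepsilon\ric(h_D))\ge c\,\omega_0>0$ for all $t\ge0$. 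The upper bound on $\varphi$ in (i) is then immediate and uses nothing about $D$: at a spatial maximum of $\varphi(\cdot,t)$ one has $\ddbar\varphi\le0$, hence $0<\omega(t)^n\le\omega_t^n\le C\,\Omega$ at that point, so Hamilton's trick gives $\tfrac{d}{dt}\sup_X\varphi\le\log C-\sup_X\varphi$ and, with $\varphi(0)=0$, $\sup_X\varphi\le\max(0,\log C)$.

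For (ii), I would compare $\varphi$ from below with a barrier $\delta\log\abs{\sigma_D}_{h_D}+\psi_\delta$, where $\psi_\delta$ is a fixed \emph{bounded} $\ddbar$-potential chosen, using that $K_X-\delta D$ is ample for all $\delta\in(0,\varepsilon]$, so that the barrier is a subsolution of \eqref{KRFMA}. Away from $D$ one has $\ddbar\log\abs{\sigma_D}_{h_D}=-\ric(h_D)$, so the reference form $\omega_t-\delta\ric(h_D)+\ddbar\psi_\delta$ of the barrier's equation stays a genuine (possibly $\delta$-degenerating) K\"ahler form, while near $D$ the blow-up $\delta\log\abs{\sigma_D}_{h_D}\to-\infty$ makes the subsolution inequality automatic; the parabolic maximum principle then yields $\varphi\ge\delta\log\abs{\sigma_D}_{h_D}-C_\delta$ for $\delta\in(0,\varepsilon]$, and the remaining range $\delta\in[\varepsilon,1)$ follows at once because $\log\abs{\sigma_D}_{h_D}\le0$.

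The remaining estimate (iii) and the upper bound on $\dot\varphi$ in (i) are intertwined, and I would establish them together by a combined (or iterated) maximum-principle argument. For the trace estimate: writing $\omega(t)=\eta_t+\ddbar\hat\varphi$ with $\hat\varphi:=\varphi-\varepsilon\log\abs{\sigma_D}_{h_D}$ turns the reference form into the genuinely K\"ahler $\eta_t$, and the parabolic Aubin--Yau/Schwarz-lemma computation, using that $\omega_0$ has fixed bisectional curvature, gives $(\partial_t-\Delta_{\omega(t)})\log\mathrm{tr}_{\omega_0}\omega(t)\le C\,\mathrm{tr}_{\omega(t)}\omega_0$; one then applies the maximum principle to $\log\mathrm{tr}_{\omega_0}\omega(t)-A\hat\varphi$, where for $A$ large the term $-A\Delta_{\omega(t)}\hat\varphi=-An+A\,\mathrm{tr}_{\omega(t)}\eta_t\ge-An+Ac\,\mathrm{tr}_{\omega(t)}\omega_0$ absorbs the bad term, (ii) with a fixed $\delta<\varepsilon$ shows $-A\hat\varphi=-A\varphi+A\varepsilon\log\abs{\sigma_D}_{h_D}\le A(\varepsilon-\delta)\log\abs{\sigma_D}_{h_D}+C\to-\infty$ along $D$ so the maximum sits in $X\setminus D$, and the bound $\dot\varphi+\varphi\le C$ (this is where $\dot\varphi\le C$ is needed) converts $\mathrm{tr}_{\omega(t)}\omega_0\le C$ into $\mathrm{tr}_{\omega_0}\omega(t)\le C|\sigma_D|^{-2\lambda}_{h_D}$ with $\lambda$ proportional to $A\varepsilon$. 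Conversely, for $\dot\varphi\le C$: differentiating \eqref{KRFMA} gives $(\partial_t-\Delta_{\omega(t)})\dot\varphi=-\dot\varphi-e^{-t}\mathrm{tr}_{\omega(t)}(\omega_0-\chi)$, and applying the maximum principle to $\dot\varphi-a\varphi+a\varepsilon\log\abs{\sigma_D}_{h_D}$ with $a$ large and $t$ bounded away from $0$, the net coefficient of $\mathrm{tr}_{\omega(t)}\chi$ becomes negative and combines with $a\varepsilon\,\mathrm{tr}_{\omega(t)}\ric(h_D)$ into $-a\,\mathrm{tr}_{\omega(t)}(\chi-\varepsilon\ric(h_D))\le0$, so the evolution inequality reduces to $(\partial_t-\Delta_{\omega(t)})(\,\cdot\,)\le-(1+a)\dot\varphi+C$; using (ii) and (iii) to see that for $a$ large this test function tends to $-\infty$ along $D$, its maximum lies in $X\setminus D$, where the inequality forces $\dot\varphi\le C$, and continuity on a compact time interval finishes it. The circularity is broken either by carrying $\dot\varphi$ as an additional term in the trace test function, or by first proving a rougher version of each estimate and then iterating; this is precisely where \cite{S1,S2}'s analytic substitute for Kawamata's theorem enters.

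Finally, for (iv): on a compact $K\subset X\setminus D$ the section $\sigma_D$ is bounded away from $0$, so (i)--(iii) supply, uniformly in $t$, a $C^0$ bound on $\varphi$ and a two-sided bound $c\,\omega_0\le\omega(t)\le C\,\omega_0$ on $K$; hence \eqref{KRFMA} is a uniformly parabolic complex Monge--Amp\`ere equation on $K$, and the interior parabolic Evans--Krylov estimate followed by Schauder bootstrapping gives $\|\varphi\|_{C^\ell(K')}\le C_{\ell,K'}$ on any slightly smaller $K'$. The main obstacle is the one flagged above: because $\omega_t$ degenerates as $t\to\infty$ to the merely nef, possibly non-semipositive class $\chi$, none of the classical test functions is coercive near $D$, and one must check that every twist by $\log\abs{\sigma_D}_{h_D}$ can be taken large enough to push the extremum into $X\setminus D$ while still retaining the favorable sign from $\chi-\varepsilon\ric(h_D)>0$ and from the already-proved $C^0$ bounds; arranging these choices consistently, and handling the $t$-dependence of the reference forms near $t=0$, is the genuinely delicate part.
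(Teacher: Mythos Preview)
The paper does not give its own proof of this lemma; it simply cites \cite{PSS,Ts,S1,S2}. However, the paper does prove the exact analogues for the perturbed flow on $\tilde X$ in Lemmas~4.1--4.3, and comparing your sketch to those arguments reveals one place where you have made matters harder than they are.

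Your treatment of the upper bound on $\dot\varphi$ is overcomplicated, and the ``circularity'' you describe between (i) and (iii) is spurious. The standard test function is $(e^t-1)\dot\varphi-\varphi$, applied globally on $X$ with no twist by $\log\abs{\sigma_D}_{h_D}$: differentiating \eqref{KRFMA} and combining, one finds
\[
(\partial_t-\Delta_{\omega(t)})\bigl((e^t-1)\dot\varphi-\varphi\bigr)=n-\mathrm{tr}_{\omega(t)}\omega_0\le n,
\]
so the ordinary maximum principle gives $\dot\varphi\le(\varphi+nt)/(e^t-1)\le C$, using only the upper bound on $\varphi$ already established. This is exactly the computation the paper carries out in Lemma~4.1 (equation \eqref{eqn:con}). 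No localisation away from $D$ is needed, and the bound on $\dot\varphi$ is available \emph{before} one touches (iii), so the trace estimate can then be run in the usual order. In particular, your remark that ``this is precisely where \cite{S1,S2}'s analytic substitute for Kawamata's theorem enters'' is misplaced: these four estimates are elementary consequences of the maximum principle and predate that work; the deep input from \cite{S1,S2} is used only later, in the identification of the regular set and the base-point-freeness argument.

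Apart from this, your outline for (ii), (iii) and (iv) matches the standard arguments (and the paper's Lemmas~4.2--4.3) essentially verbatim: the barrier $\delta\log\abs{\sigma_D}_{h_D}$ for the lower bound, the Aubin--Yau computation with test function $\log\mathrm{tr}_{\omega_0}\omega(t)-A\varphi+A\delta\log\abs{\sigma_D}_{h_D}$ for the trace, and parabolic Schauder for the higher-order bounds. Your direct maximum-principle argument for $\sup_X\varphi$ (via $\omega_t^n\le C\Omega$) is a valid alternative to the Jensen/mean-value route the paper uses in Lemma~4.1.
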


Hence we can conclude that $$\omega(t)\xrightarrow{C^\infty_{loc}(X\backslash D)} \omega_\infty,$$
for some smooth K\"ahler metric $\omega_\infty$ on $X\backslash D$. On the other hand, it can be shown that $\dot \varphi(t) \to 0$ on any $K\subset X\backslash D$ as $t\to \infty$, hence $\omega_\infty$ satisfies the equation $$\omega_\infty^n  = (\chi + \ddbar \varphi_\infty)^n = e^{\varphi_\infty}\Omega, \text{ on }X\backslash D,$$
and $\omega_\infty$ is a K\"ahler-Einstein metric on $X\backslash D$, i.e., $$\ric(\omega_\infty) = - \omega_\infty.$$

\begin{prop}\label{prop:1}\cite{S2}
For any holomorphic section $\sigma\in H^0(X,mK_X)$, there is a constant $C=C(\sigma)$ such that for any $t\ge 0$, we have
$$\sup_X |\sigma|_{h_t^m}^2 \le C, \qquad  \sup_X |\nabla_t \sigma|_{h_t^m}^2\le C$$ where $h_t = \frac{1}{\omega(t)^n}$ is the hermtian metric on $K_X$ induced by the K\"ahler metric $\omega(t)$ and $\nabla_t$ is the covariant derivative with respect to $h_t^m$.
\end{prop}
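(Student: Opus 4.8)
The plan is to prove both bounds by the parabolic maximum principle, together with the estimates of Lemma~\ref{lemma 1} and the $C^\infty_{loc}$-convergence $\omega(t)\to\omega_\infty$ on $X\setminus D$ to control the behaviour near the divisor $D$, where the flow metrics degenerate.

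\emph{The $C^0$ bound.} Set $u:=|\sigma|^2_{h_t^m}$. Since $\sigma$ is holomorphic and the Chern curvature of $h_t^m$ on $mK_X$ is $-m\,\ric(\omega(t))$, the Bochner identity for holomorphic sections together with $\partial_t\log\omega(t)^n=-R-n$ (with $R=\mathrm{tr}_{\omega(t)}\ric(\omega(t))$) gives
\[
\big(\partial_t-\Delta_{\omega(t)}\big)u \;=\; mn\,u-|\nabla_t\sigma|^2_{h_t^m}\;\le\;mn\,u .
\]
The factor $mn\,u$, forced by the normalization term $-\omega$ in \eqref{KRF}, makes a direct conclusion impossible. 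Instead I would use the pointwise identity $u=|\sigma|^2_{h_\Omega^m}\,e^{-m(\dot\varphi+\varphi)}$ coming from \eqref{KRFMA}, where $h_\Omega=\Omega^{-1}$ is the \emph{fixed} metric on $K_X$ and $|\sigma|^2_{h_\Omega^m}$ is a fixed bounded function; this reduces the claim to the Monge--Amp\`ere lower bound $\dot\varphi+\varphi\ge \tfrac1m\log|\sigma|^2_{h_\Omega^m}-C(\sigma)$, i.e.\ $\omega(t)^n\ge c(\sigma)\,|\sigma|^{2/m}_{h_\Omega^m}\,\Omega$. This I would establish by a barrier argument in the spirit of Lemma~\ref{lemma 1}(ii): the function $\dot\varphi+\varphi-\tfrac1m\log|\sigma|^2_{h_\Omega^m}$ tends to $+\infty$ along $\mathrm{div}(\sigma)$, so its infimum is attained away from $\mathrm{div}(\sigma)$; near $D$ it is controlled using $\varphi\ge\delta\log|\sigma_D|^2_{h_D}-C_\delta$, $\mathrm{tr}_{\omega_0}\omega(t)\le C|\sigma_D|^{-2\lambda}_{h_D}$ and the ampleness of $K_X-\varepsilon D$, and on $X\setminus D$ it is controlled by the $C^\infty_{loc}$-convergence; the maximum principle is then applied to a suitable linear combination of $\dot\varphi$, $\varphi$, $\log|\sigma|^2_{h_\Omega^m}$ and a small multiple of $\log|\sigma_D|^2_{h_D}$.

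\emph{The gradient bound.} Granting the $C^0$ bound, I would apply the maximum principle to $Q:=|\nabla_t\sigma|^2_{h_t^m}+A\,|\sigma|^2_{h_t^m}$ with $A\gg1$. The Bochner--Weitzenb\"ock formula on $\Omega^{1,0}_X\otimes mK_X$ writes $(\partial_t-\Delta_{\omega(t)})|\nabla_t\sigma|^2_{h_t^m}$ as $-|\nabla\nabla_t\sigma|^2-|\bar\nabla\nabla_t\sigma|^2$, plus a zeroth-order curvature term of the shape $\langle(-\ric^{\sharp}-mR)\,\nabla_t\sigma,\overline{\nabla_t\sigma}\rangle$, plus cross terms that are linear in $\nabla_t\sigma$ with coefficients involving $\sigma$, $\ric(\omega(t))$ and $\nabla\ric(\omega(t))$. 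The key point is that, because the twisting bundle is $mK_X$ with curvature $-m\,\ric(\omega(t))$, the curvature term is bounded below: where $\ric(\omega(t))$ is large negative---which is exactly what happens near $D$---both $-\ric^{\sharp}$ and $-mR$ are large positive, while on $X\setminus D$ everything is bounded by the $C^\infty_{loc}$-convergence. The only delicate contributions are the $\nabla\ric(\omega(t))$ cross terms near $D$; these I would absorb into $-|\nabla\nabla_t\sigma|^2$ by Cauchy--Schwarz, using the $C^0$ bound on $\sigma$, so that at an interior maximum of $Q$ the term $-A|\nabla_t\sigma|^2_{h_t^m}$ coming from $\partial_t|\sigma|^2_{h_t^m}$ dominates; near $D$ one inserts a cutoff supported in $X\setminus D$, where Lemma~\ref{lemma 1}(iv) supplies all higher derivatives. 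The main obstacle throughout is the divisor $D$: since $\omega(t)$ blows up there and no a priori higher-order estimates are available up to $D$, both the Monge--Amp\`ere lower bound and the control of $\nabla\ric(\omega(t))$ must be wrung out of the zeroth-order estimates of Lemma~\ref{lemma 1} and the bigness of $K_X$ alone---this is the part of Song's argument (which, among other tools, uses the $L^2$ estimate of Theorem~\ref{thm:L2}) that does the real work.
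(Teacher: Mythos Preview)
The paper does not prove this proposition at all: it is stated with the citation \cite{S2} and used as a black box, so there is no ``paper's own proof'' to compare against. Your task, then, is really to reconstruct Song's argument, and what you have written is an outline rather than a proof.

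Your reduction of the $C^0$ bound to the volume--form lower bound $\omega(t)^n\ge c(\sigma)\,|\sigma|^{2/m}_{h_\Omega^m}\,\Omega$, i.e.\ to $\dot\varphi+\varphi\ge \tfrac1m\log|\sigma|^2_{h_\Omega^m}-C$, is correct and is the right starting point. However, the barrier argument you sketch is not complete: if one sets $H=\dot\varphi+\varphi-\tfrac1m\log|\sigma|^2_{h_\Omega^m}$, a direct computation gives $(\partial_t-\Delta_{\omega(t)})H=-n$, which by itself only yields $H_{\min}(t)\ge H_{\min}(0)-nt$, degrading in time. One must add further terms (a multiple of $\varphi$ and a small multiple of $\log|\sigma_D|^2_{h_D}$, as you suggest), but then the inequality at the minimum involves $\dot\varphi$ with the wrong sign and $\mathrm{tr}_{\omega(t)}\hat\chi_t$, neither of which is controlled a priori; closing this loop is exactly the content of Song's argument and you have not indicated how to do it.

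For the gradient bound, your heuristic that ``where $\ric(\omega(t))$ is large negative, both $-\ric^\sharp$ and $-mR$ are large positive'' does not survive the actual Bochner computation: the zeroth--order curvature contribution to $(\partial_t-\Delta)|\nabla_t\sigma|^2$ contains a term of the form $\langle\ric^\sharp\nabla_t\sigma,\overline{\nabla_t\sigma}\rangle$ (from $\partial_t g^{-1}$ and from Weitzenb\"ock) with a sign that is \emph{bad} when $\ric$ is very negative, not good. Moreover the cross term you identify does involve $\partial R$ (equivalently $\nabla\ric$), coming from $\partial_t(\nabla_t\sigma)=m(\partial R)\sigma$, and near $D$ you have no a priori bound on $|\partial R|$; your proposal to ``absorb into $-|\nabla\nabla_t\sigma|^2$ by Cauchy--Schwarz'' leaves a residual $|\partial R|^2|\sigma|^2$ that you cannot control. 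In Song's argument these issues are handled by a more careful choice of test function (and by exploiting $|\bar\nabla\nabla_t\sigma|^2=m^2|\ric|^2_{\omega(t)}|\sigma|^2$, which is a genuinely good term), not by the sign heuristic you give. Finally, the $L^2$ estimate of Theorem~\ref{thm:L2} plays no role in bounding $|\sigma|$ or $|\nabla_t\sigma|$; it is used elsewhere in \cite{S2} to \emph{construct} sections, not to estimate given ones.
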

Letting $t\to \infty$, we have $h_t\to h_\infty = h_{KE} = h_\chi e^{-\varphi_\infty}$ on $X\backslash D$ (here $h_\chi = \frac{1}{\Omega}$), and
\begin{equation}\label{eqn:bd}\sup_{X\backslash D} |\sigma|_{h_\infty^m}^2\le C, \quad\sup_{X\backslash D} |\nabla_\infty \sigma|_{h_\infty^m}^2\le C.\end{equation}

\begin{defn}
We define a set $\mathcal R_X\subset X$ to be the points $p\in X$ such that the $\mu$-jets at $p$ are generated by global sections of $mK_X$ for some $m\in \mathbb Z^+$, for any $\mu\in\mathbb N^n$ with $|\mu|\le 2$.
\end{defn}

\begin{prop}\cite{S2}
$\mathcal R_X$ is an open dense set of $X$ and on $\mathcal R_X$ we have locally smooth convergence of $\omega(t)$ to $\omega_\infty$.
\end{prop}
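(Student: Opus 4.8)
The plan is to establish the three assertions—openness, density, and locally smooth convergence—separately, in increasing order of difficulty. \emph{Openness} is immediate: for each fixed $m$ the $2$-jet evaluation $H^0(X,mK_X)\to J^2_p(mK_X)$ is, in local trivializations, a matrix whose entries are the low-order Taylor coefficients of a fixed basis of sections, hence holomorphic in $p$; surjectivity amounts to this matrix having maximal rank, an open condition. Thus the set of $p$ at which all $\mu$-jets with $|\mu|\le2$ are generated by $H^0(X,mK_X)$ is open, and $\mathcal R_X$, being the union of these sets over $m\in\mathbb Z^+$, is open.

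\emph{Density.} I would prove the stronger inclusion $\mathcal R_X\supseteq X\setminus\mathrm{Supp}(D)$, which is dense and open. Fix $p\notin\mathrm{Supp}(D)$ and a coordinate ball $(U,z)$ centred at $p$ with $U\cap\mathrm{Supp}(D)=\emptyset$. Since $K_X-\varepsilon D$ is ample, choose a K\"ahler form $\omega_{amp}\in c_1(K_X-\varepsilon D)$ and, for $m$ large with $m\varepsilon\in\mathbb Z$, a smooth metric on $m(K_X-\varepsilon D)$ of curvature $m\omega_{amp}$; tensoring with $|\sigma_D|_{h_D}^{-2m\varepsilon}$ produces a singular metric $h$ on $mK_X$ whose curvature current is $\ge m\omega_{amp}\ge cm\,\omega_0$ on all of $X$ and which is smooth on $U$. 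Let $\eta$ be a cutoff with $\eta\equiv1$ on a ball around $p$ and $\mathrm{supp}\,\eta\subset U$, and set $h_p=h\cdot e^{-(n+2)\eta\log|z|^2}$; then $\ric(h_p)\ge cm\,\omega_0-C_\eta\,\omega_0$ as a current, so $\ric(h_p)+\ric(\omega_0)\ge\delta\,\omega_0$ once $m$ is large relative to the fixed constant $C_\eta$. For each $\mu$ with $|\mu|\le2$ put $e=dz_1\wedge\cdots\wedge dz_n$ and apply Theorem \ref{thm:L2} to the $\bar\partial$-closed $mK_X$-valued $(0,1)$-form $\tau=\bar\partial(\eta\,z^\mu\,e^{\otimes m})$, which is supported where $\eta$ transitions and hence satisfies $\int_X|\tau|^2_{h_p,\omega_0}\,\omega_0^n<\infty$. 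The resulting $u$ with $\bar\partial u=\tau$ and finite $L^2$ norm must vanish to order $\ge3$ at $p$ because of the $|z|^{-2(n+2)}$ weight, so $s:=\eta\,z^\mu\,e^{\otimes m}-u\in H^0(X,mK_X)$ realizes the prescribed $\mu$-jet at $p$. Hence $p\in\mathcal R_X$, and $m$ can be taken uniformly over compact subsets of $X\setminus\mathrm{Supp}(D)$.

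\emph{Locally smooth convergence on $\mathcal R_X$.} On $X\setminus D$ this is already recorded above, so the point is to cross $D$. Fix $p\in\mathcal R_X\cap D$ with sections $\sigma_0,\dots,\sigma_N\in H^0(X,mK_X)$ generating $2$-jets at $p$; by openness they generate $1$-jets on a neighbourhood $U\ni p$, so $\Phi=[\sigma_0:\cdots:\sigma_N]\colon U\to\mathbb P^N$ is an immersion and $\hat\omega:=\frac1m\Phi^*\omega_{FS}$ is a \emph{fixed} smooth K\"ahler form on $U$. Since $\omega(t)^n=e^{\dot\varphi+\varphi}\Omega\le C\,\Omega$ by Lemma \ref{lemma 1}(i), the induced metric satisfies $h_t^m\ge c\,h_\chi^m$, so $|\sigma_0|^2_{h_t^m}\ge c'>0$ on a smaller $U$ because $\sigma_0(p)\neq0$. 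Feeding this lower bound together with the uniform estimates $\sup_X|\sigma_i|^2_{h_t^m}\le C$ and $\sup_X|\nabla_t\sigma_i|^2_{h_t^m}\le C$ of Proposition \ref{prop:1} into the standard bound for the energy density of $\Phi$ (this quantity being, up to the factor $1/m$, the energy density of $\Phi$ for the domain metric $\omega(t)$ and bundle metric $h_t^m$) yields $\mathrm{tr}_{\omega(t)}\hat\omega\le C$ on $U$, i.e. $\omega(t)\ge c\,\hat\omega$; combined with $\omega(t)^n\le C\,\hat\omega^n$ this gives $c\,\hat\omega\le\omega(t)\le C\,\hat\omega$ on $U$, uniformly in $t$. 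Then the Monge--Amp\`ere flow is uniformly parabolic on $U$ with $\varphi$ and $\dot\varphi$ uniformly bounded (the lower bound on $\dot\varphi$ following from $\omega(t)^n\ge c\,\Omega$), so local parabolic Evans--Krylov and Schauder estimates give uniform $C^\infty$ bounds for $\omega(t)$ on each $U'\Subset U$; any $C^\infty$ subsequential limit coincides with $\omega_\infty$ on the dense set $U'\setminus D$, hence is the unique smooth extension of $\omega_\infty$ across $U'$, and therefore $\omega(t)\to\omega_\infty$ in $C^\infty_{loc}(\mathcal R_X)$.

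I expect the main obstacle to be this last, two-sided metric bound $c\,\hat\omega\le\omega(t)\le C\,\hat\omega$ near a point of $\mathcal R_X$ lying on $D$: it is precisely there that one must use \emph{both} halves of Proposition \ref{prop:1}—the $\sup$ bound and the gradient bound—together with the volume non-degeneracy $\omega(t)^n\ge c\,\Omega$ forced by a non-vanishing pluricanonical section, and it is this that lets the smooth convergence propagate off $X\setminus D$. The other delicate point is in the density step: checking that the curvature twist by the logarithmic pole does not destroy $\ric(h_p)+\ric(\omega_0)\ge\delta\,\omega_0$, which is why $m$ must be taken large relative to the cutoff; the remaining content is formal (openness) or routine parabolic regularity.
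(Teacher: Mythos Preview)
The paper does not supply its own proof of this proposition; it is quoted from \cite{S2}, so there is nothing in the present paper to compare your argument against line by line. That said, your proposal is correct and follows the expected strategy that \cite{S2} uses: openness via the rank condition on the jet evaluation, density via H\"ormander's $L^2$ estimate with a logarithmic weight (your choice of weight $(n+2)\eta\log|z|^2$ indeed forces vanishing to order $\ge 3$, so the constructed sections realise arbitrary $2$-jets), and smooth convergence on $\mathcal R_X$ via the two-sided metric bound coming from Proposition~\ref{prop:1}.

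Your handling of the third step is the substantive part and is sound. The key computation, which you describe in words, is that
\[
m\,\mathrm{tr}_{\omega(t)}\hat\omega \;=\; \Delta_{\omega(t)}\log\!\sum_i|\sigma_i|^2_{h_t^m}\;-\;mR(\omega(t))\;\le\;\frac{\sum_i|\nabla_t\sigma_i|^2_{h_t^m}}{\sum_i|\sigma_i|^2_{h_t^m}},
\]
the scalar curvature terms cancelling via the Bochner identity $\Delta_{\omega(t)}|\sigma_i|^2_{h_t^m}=|\nabla_t\sigma_i|^2+m|\sigma_i|^2 R$. With the lower bound $\sum_i|\sigma_i|^2_{h_t^m}\ge c'$ (from $\dot\varphi+\varphi\le C$ and $\sigma_0(p)\neq 0$) and the gradient bound of Proposition~\ref{prop:1}, this gives $\omega(t)\ge c\,\hat\omega$; the volume upper bound then yields the matching upper bound $\omega(t)\le C\,\hat\omega$, and local parabolic Evans--Krylov/Schauder finishes as you say. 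The only cosmetic point is that your phrase ``$m$ can be taken uniformly over compact subsets of $X\setminus\mathrm{Supp}(D)$'' is not needed for the statement (membership in $\mathcal R_X$ allows $m$ to depend on $p$), though it is true and harmless.
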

%($\mathcal R_X$ is the largest possible set where the smooth convergence can happen.)

By the smooth convergence of $\omega(t)$ on $X\backslash D$, we can choose a point $p\in X\backslash D$ and a small $r_0>0$ such that (we write the associated Riemannian metric of $\omega(t)$ as $g(t)$)  $$B_{g(t)}(p,r_0)\subset\subset X\backslash D, \qquad Vol_{g(t)}(B_{g(t)}(p,r_0))\ge v_0,\quad \forall t\ge 0$$
for some $v_0>0$. For any sequence $t_i\to\infty$, $(X,g(t_i),p)$ is a sequence of almost K\"ahler-Einstein manifolds (see the Appendix), in the sense of Tian-Wang (\cite{TiWa}). By the structure theorem in Tian-Wang (\cite{TiWa}), we have
%Then by Cheeger-Colding-Tian theory \cite{CC,CCT}, we have 
\begin{equation}\label{eqn:GH}
(X,g(t_i),p)\xrightarrow{d_{GH}} (X_\infty, d_\infty, p_\infty).
\end{equation}
Moreover, $X_\infty$ has a regular-singular decomposition, $X_\infty = \mathcal R\cup \mathcal S$; the singular $\mathcal S$ is closed and of Hausdroff dimension $\le 2n - 4$; the regular set $\mathcal R$ is an open smooth K\"ahler manifold, and $d_\infty|_{\mathcal R}$ is induced by some smooth K\"ahler-Einstein metric $g'_\infty$, i.e. on $\mathcal R$, $\ric(g'_\infty) = - g'_\infty$. %The convergence is locally $C^{1,\alpha}$ in $\mathcal R$, i.e., for any compact $K_\infty\subset\mathcal R $, there exist embeddings $\chi_i: K_\infty \to K_i\subset (X,g(t_i),p)$ such that $\chi_i^* g(t_i)\xrightarrow{C^{1,\alpha}(K_\infty)} g_\infty'$.

We define a subset $\mathcal S_X\subset X_\infty$ to be a set consisting of the points $q\in X_\infty$ such that there exist a sequence of points $q_k\in X\backslash \mathcal R_X$ such that $q_k\to q$ along the Gromov-Hausdroff convergence.

By a theorem of Rong-Zhang (see Theorem 4.1 in \cite{RZ}), there exists a surjective map $$\overline{(\mathcal R_X,g_\infty)} \to (X_\infty, d_\infty),$$ where $\overline{(\mathcal R_X,g_\infty)}$ denotes the metric completion of the metric space $(\mathcal R_X,g_\infty)$, 
and a homeomorphism $(\mathcal R_X, g_\infty)\to (X_\infty\backslash\mathcal S_X,d_\infty)$ which is a local isometry. 

It's not hard to see that $\mathcal S_X$ is closed in $X_\infty$ and any tangent cone at $q\not\in \mathcal S_X$ is $\mathbb R^{2n}$, hence $X_\infty\backslash \mathcal S_X\subset \mathcal R$, i.e., $\mathcal S\subset\mathcal S_X$. 
\begin{prop}
We have $$\mathcal S_X\subset \mathcal S$$ hence $\mathcal S = \mathcal S_X$.
\end{prop}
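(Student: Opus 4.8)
The plan is to argue by contradiction, showing that no point of the regular set $\mathcal R$ can be a Gromov--Hausdorff limit of points of $X\backslash\mathcal R_X$; since $X_\infty=\mathcal R\cup\mathcal S$ this gives $\mathcal S_X\subset\mathcal S$, and combined with the inclusion $\mathcal S\subset\mathcal S_X$ already established this yields $\mathcal S=\mathcal S_X$. The first ingredient I would set up is an algebro-geometric description of $X\backslash\mathcal R_X$. By the base point free theorem (available analytically via \cite{S2}), for $m$ large $|mK_X|$ defines the canonical morphism $\pi=\Phi_{|mK_X|}\colon X\to X_{can}$, which is birational because $K_X$ is big. Away from the exceptional locus $\mathrm{Exc}(\pi)$ the morphism $\pi$ is a local isomorphism onto the smooth locus of $X_{can}$, so after passing to a high multiple of $m$ the global sections of $mK_X$ generate all $2$-jets there; hence $X\backslash\mathcal R_X\subset\mathrm{Exc}(\pi)$. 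Since $\pi$ is birational with normal target, $\mathrm{Exc}(\pi)$ is the union of its positive-dimensional fibers, so, fixing a very ample line bundle $H$ on $X$, through every point $q\in X\backslash\mathcal R_X$ there passes an irreducible curve $C$ lying in a fiber of $\pi$, necessarily with $K_X\cdot C=0$, and $C$ can be taken in a bounded family (cut a positive-dimensional fiber by general members of $|H|$ through $q$); comparing $[\omega_0]$ with $c_1(H)$ this gives a uniform bound $\int_C\omega_0\le C_0$ over all such curves.

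Now suppose, for contradiction, that $q\in\mathcal S_X\cap\mathcal R$. By definition of $\mathcal S_X$ there are $q_k\in X\backslash\mathcal R_X$ and $t_{i_k}\to\infty$ with $q_k\to q$ along the Gromov--Hausdorff convergence \eqref{eqn:GH}. Fix $\rho>0$ with $B_{d_\infty}(q,2\rho)\subset\mathcal R$. Because $q\in\mathcal R$, the Tian--Wang structure theory \cite{TiWa} upgrades the convergence near $q$ to smooth (Cheeger--Gromov) convergence of $g(t_{i_k})$ to $g'_\infty$; concretely, for $k$ large the geodesic ball $B_{g(t_{i_k})}(q_k,\rho)\subset X$ corresponds to a subset of $B_{d_\infty}(q,2\rho)\subset\mathcal R$ and carries a metric with a uniform curvature bound $|Rm|(g(t_{i_k}))\le\Lambda$, and after shrinking $\rho$ depending on $\Lambda$ we may assume it is uniformly smoothly close to a Euclidean ball. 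For each $k$ choose an irreducible curve $C_k\ni q_k$ as in the first paragraph.

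Two incompatible estimates for $\int_{C_k}\omega(t_{i_k})$ then conclude the argument. On the one hand, since $K_X\cdot C_k=0$ and $[\omega(t)]=e^{-t}[\omega_0]+(1-e^{-t})c_1(K_X)$,
\[
\int_{C_k}\omega(t_{i_k})=e^{-t_{i_k}}\int_{C_k}\omega_0\le C_0\,e^{-t_{i_k}}\longrightarrow 0\qquad(k\to\infty).
\]
On the other hand $C_k$ is a complex, hence stationary, subvariety of dimension $1$ passing through the center $q_k$ of the ball $B_{g(t_{i_k})}(q_k,\rho)$, so the monotonicity formula for the area density of minimal (complex) subvarieties in a geodesic ball with curvature bounded by $\Lambda$ gives
\[
\int_{C_k}\omega(t_{i_k})=\mathrm{Area}_{g(t_{i_k})}(C_k)\ge\mathrm{Area}_{g(t_{i_k})}\big(C_k\cap B_{g(t_{i_k})}(q_k,\rho)\big)\ge c\,\rho^2>0,
\]
with $c=c(\Lambda)>0$ independent of $k$. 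This contradiction shows $\mathcal S_X\cap\mathcal R=\emptyset$, i.e. $\mathcal S_X\subset\mathcal S$, hence $\mathcal S=\mathcal S_X$.

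The step I expect to require the most care is the passage in the second paragraph from the bare statement ``$q_k\to q\in\mathcal R$'' to genuine uniform bounded geometry on the geodesic balls $B_{g(t_{i_k})}(q_k,\rho)$ around the \emph{preimage} points $q_k$: this is exactly where one must invoke the $C^\infty$ part of the Tian--Wang structure theorem (smooth Cheeger--Gromov convergence over the regular set) together with the compatibility of the Gromov--Hausdorff approximation maps with the Cheeger--Gromov diffeomorphisms over the region mapping into $\mathcal R$. A secondary point is the algebraic input of the first paragraph, that $X\backslash\mathcal R_X$ is covered by a bounded family of $K_X$-trivial curves; this rests on the base point free theorem and on boundedness of the fibers of $\pi$. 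I note that the more ``analytic'' alternative of producing, for $q_k\in X\backslash\mathcal R_X$, sections of $mK_X$ generating $2$-jets at $q_k$ via the $L^2$ estimate of Theorem \ref{thm:L2} cannot work directly, because it would require a positive-curvature singular metric on $mK_X$ that is smooth at $q_k\in\mathrm{Exc}(\pi)$, which does not exist; the curve-and-monotonicity approach sidesteps this entirely.
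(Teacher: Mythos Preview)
Your argument is essentially correct and takes a genuinely different route from the paper. The paper also argues by contradiction, but reaches it by a direct H\"ormander $L^2$ construction: given $q_k\in X\backslash\mathcal R_X$ converging to $q\in\mathcal R$, it uses volume continuity plus Perelman's pseudo-locality (via Tian--Wang) to obtain curvature bounds on $B_{g(t_k)}(q_k,2r_0)$ at the slightly \emph{later} time $t_k+\varepsilon_0$, builds holomorphic coordinates $\{z^{(k)}_\alpha\}$ there, and solves $\bar\partial u_{k,\mu}=\bar\partial\big(\eta\cdot(z^{(k)})^\mu\big)$ against the singular weight $e^{-\Phi_k}h_k^m$, where $h_k=h_\chi e^{-\varphi(t_k+\varepsilon_0)/2-(\epsilon/m)\log|\sigma_D|^2_{h_D}}$ and $\Phi_k$ carries a log pole of order $n+1+|\mu|$ at $q_k$. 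The resulting global section of $mK_X$ generates the $\mu$-jet at $q_k$, contradicting $q_k\notin\mathcal R_X$. Your curve-plus-monotonicity route is more geometric and quite clean once base point freeness is in hand; the paper's route is purely analytic and self-contained. Note also that the paper secures bounded geometry near $q_k$ at time $t_k+\varepsilon_0$ via pseudo-locality, thereby sidestepping the compatibility of Gromov--Hausdorff and Cheeger--Gromov maps that you correctly flag as the delicate step; your cohomological upper bound $\int_{C_k}\omega(t)\to 0$ works equally well at that later time, so your argument can be run there too.

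Two corrections. First, your closing claim that the $L^2$ alternative ``cannot work directly'' because one would need a positively curved singular metric smooth at $q_k\in\mathrm{Exc}(\pi)$ is mistaken: the paper's $h_k$ \emph{is} singular along $D\ni q_k$, but only to order $\epsilon\ll 1$ (hence locally integrable), and the decisive positivity comes not from ampleness of $K_X$ but from the term $\frac{m}{2}\ddbar\varphi(t_k+\varepsilon_0)$ in the curvature, which contributes $\frac{m}{2}\omega_k$ minus a bounded background form, together with $\chi-\varepsilon\ric(h_D)>0$. Theorem~\ref{thm:L2} only needs curvature positivity in the current sense, which holds. Second, your argument imports base point freeness at the outset to get $X\backslash\mathcal R_X\subset\mathrm{Exc}(\pi)$ and the $K_X$-trivial curves. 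In the paper's logical order this is circular: base point freeness (Proposition~\ref{prop 4.4}) is established \emph{after} the present proposition, using the identification $\mathcal R=\mathcal R_X$ that this proposition yields. Citing Kawamata externally removes the circularity but forfeits the paper's stated aim of an argument independent of Kawamata's theorem; that independence is precisely what the $L^2$ approach buys.
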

\begin{proof}
Suppose not, there exists $q\in \mathcal S_X\cap \mathcal R$, then there exist $q_k\in (X\backslash \mathcal R_X,g(t_k))$ converging to $q$ along the Gromov-Hausdroff convergence \eqref{eqn:GH}. Since $\mathcal R$ is open and tangent cones at points in $\mathcal R$ is the Euclidean space $\mathbb R^{2n}$, for any small $\delta>0$, there exists a sufficiently small $r_0>0$ such that
\begin{equation*}
B_{d_\infty}(q,3r_0)\subset \subset \mathcal R, \qquad Vol_{g_\infty'}(B_{d_\infty}(q,3r_0))>(1-\delta/2)Vol_{g_E}(B(0,3r_0)),
\end{equation*}
where $g_E$ is the standard Euclidean metric on $\mathbb R^{2n}$ and $B(0,3r_0)$ is the Euclidean ball.
%We have diffeomorphisms $\phi_k: B_{d_\infty}(q,3r_0) \to B_{g(t_k)}(q_k, 3 r_0)$, such that $\phi_k^* g(t_k) \to g_\infty'$ in $C^{1,\alpha}$-topology. 
Since Ricci curvatures are bounded below, by volume continuity for the Gromov-Hausdorff convergence (\cite{Co}) we have for $k$ large enough, 
\begin{equation*}
Vol_{g(t_k)}(B_{g(t_k)} (q_k,3r_0))>(1-\delta ) Vol_{g_E}(B(0,3r_0)).
\end{equation*}
By assumption that the Ricci curvature is uniformly bounded below along the K\"ahler Ricci flow, hence Perelman's pseudo-locality (\cite{P, TiWa}) implies that if $\delta$ is small enough, there exists a small but uniform constant $\varepsilon_0>0$ such that 
\begin{equation*}
\sup_{B_{g(t_k)}(q_k, 2r_0)} |Rm(g(t_k+\varepsilon_0))|\le \frac{2}{\varepsilon_0}.
\end{equation*}
Moreover, by Theorem 4.2 in \cite{TiWa}, \begin{equation}\label{eqn:GHp}
(B_{g(t_k)}(q_k,2r_0), g(t_k+\varepsilon_0),q_k)\xrightarrow{d_{GH}} (B_{d_\infty}(q,2r_0),d_\infty,q).
\end{equation}
By Shi's derivative estimate, we have
\begin{equation*}
\sup_{B_{g(t_k)}(q_k,3r_0/2)} |\nabla^l Rm(t_k+\varepsilon_0)|\le C(\varepsilon_0,l),
\end{equation*}
for any $l\in\mathbb N$ and some constant $C(\varepsilon_0,l)$.
Thus we have smooth convergence of $g(t_k+\varepsilon_0)$ to a K\"ahler metric $\tilde g_\infty$ on $(B_{d_\infty}(q,r_0), J_\infty)$ along the Gromov-Hausdorff convergence \eqref{eqn:GHp}, where $J_\infty$ is the limit complex structure.

Without loss of generality we can assume the injectivity radii of  $g(t_k+\varepsilon_0)$ at $q_k$ are bounded below by $r_0$ (\cite{CGT}), since the Riemann curvatures and volumes of $B_{g(t_k)}(q_k,r_0)$ are uniformly bounded. For $k$ large enough, there exists (see \cite{TY}) a local holomorphic coordinates system $\{z^{(k)}_\alpha\}_{\alpha=1}^n$ on the ball $(B_{g(t_k)}(q_k,r_0),g(t_k+\varepsilon_0))$ such that $\abs{z^{(k)}} = \sum_{\alpha=1}^n\abs{z^{(k)}_\alpha}\le r^2_0$, $\abs{z^{(k)}}(q_k) = 0$ and under these coordinates $g_{\alpha\bar \beta} = g_{t_k+\varepsilon_0}(\nabla z^{(k)}_{\alpha}, \bar\nabla z^{(k)}_\beta )$ satisfies
$$\frac 1 C \delta_{\alpha\beta}\le g_{\alpha\bar\beta}\le C \delta_{\alpha\beta},\quad \|g_{\alpha\bar\beta}\|_{C^{1,\gamma}}\le C,\quad\text{for some }\gamma\in (0,1).$$ 
This implies that the Euclidean metric under these coordinates
\begin{equation}\label{eqn:Eucl}\sum_{\alpha=1}^n\sqrt{-1}dz^{(k)}_\alpha\wedge d\bar z^{(k)}_\alpha\end{equation}
is uniformly equivalent to $g(t_k+\varepsilon_0)$ on the ball $B_{g(t_k)}(q_k,r_0)$.

Recall that along K\"ahler-Ricci flow $$\ric(\omega(t))  = -\chi - \ddbar (\varphi+\dot\varphi).$$

Take a cut-off function $\eta$ on $\mathbb R$ such that $\eta(x) = 1$ for $x\in (-\infty, 1/2)$ and vanishes for $x\in [1,\infty)$. Choose a function $$\Phi_k = (|\mu| + 1 + n)\eta\bk{\frac{\abs{z^{(k)}}}{ r_0^2/2}} \log \abs{z^{(k)}} + \varphi(t_k+\varepsilon_0) + \dot\varphi(t_k+\varepsilon_0).$$
 Note that $\Phi_k$ is a globally defined function on $X$ (with a log-pole at $q_k$) when $k$ is large enough.

Since the metrics \eqref{eqn:Eucl} and $g(t_k+\varepsilon_0)$ are uniformly equivalent for $k$ large enough on the support of $\ddbar \bk{(n+1+|\mu|)\eta\bk{\frac{\abs{z^{(k)}}}{r_0^2/2}}\log \abs{z^{(k)}}}$, we see that there is a uniform constant $\Lambda$ independent of $k$ such that
\begin{equation*}
\ddbar \bk{(n+1+|\mu|)\eta\bk{\frac{\abs{z^{(k)}}}{r_0^2/2}}\log \abs{z^{(k)}}}\ge -\Lambda \omega(t_k+\varepsilon_0).
\end{equation*}
We will fix an integer $m\ge 10\Lambda$.

Define a (singular) hermitian metric on $K_X$ by $$h_k = h_\chi e^{-\frac{\varphi(t_k+\varepsilon_0)}{2} -\frac{\epsilon}{m}\log \abs{\sigma_D}_{h_D} },$$ for some small $\epsilon>0$. 
Then we have for $k$ large enough (we denote below $\omega_k = \omega(t_k+\varepsilon_0)$,  and $[D]$ the current of integration over the divisor $D$.)
\begin{align*}
\ric(h_k^m) + \ric(\omega_k) + \ddbar \Phi_k =& m\chi + \frac{1}{2}\ddbar\varphi  - \epsilon\ric(h_D) + \epsilon [D] - \chi \\
& + \ddbar \bk{(n+1+|\mu|)\eta\bk{\frac{\abs{z^{(k)}}}{r_0^2/2}}\log \abs{z^{(k)}}}\\
=& \frac{m}{2}\omega_k + \frac{m\chi}{2} - \epsilon \ric(h_D)- \frac{m}{2}e^{-t_k - \varepsilon_0}(\omega_0 - \chi)\\
& + \epsilon[D] + \ddbar \bk{(n+1+|\mu|)\eta\bk{\frac{\abs{z^{(k)}}}{r_0^2/2}}\log \abs{z^{(k)}}}\\
\ge & \frac{m}{4}\omega_k,
\end{align*}
in the current sense, for $k$ large enough. The above inequality follows since $\frac{m}{2}\chi - \epsilon\ric(h_D)$ is a fixed K\"ahler metric, which is greater than $\frac{m}{2}e^{-t_k - \varepsilon_0}(\omega_0 - \chi)$ for $k$ large enough.

Define an $mK_X$-valued $(0,1)$ form $$\eta_{k,\mu} = \bar \partial \bk{ \eta\bk{\frac{\abs{z^{(k)}}}{ r_0^2/2}} (z^{(k)})^\mu },$$
where $$(z^{(k)})^\mu = \prod_{\alpha = 1}^n (z^{(k)}_\alpha)^{\mu_\alpha}, \quad \mu = (\mu_1,\ldots, \mu_n)\in\mathbb N^n.$$
It's not hard to see (noting that the pole order along $D$ is $\le \epsilon$)
\begin{equation*}
\int_X |\eta_{k,\mu}|^2_{h_k^m} e^{-\Phi_k}\omega_k^n<\infty.
\end{equation*}
Then we can apply the Hormander's $L^2$ estimate (see Theorem \ref{thm:L2} with $L = mK_X$) to solve the following $\bar\partial$-equation
\begin{equation*}
\bar\partial u_{k,\mu} = \eta_{k,\mu}, 
\end{equation*}
with $u_{k,\mu}$ a smooth section of $mK_X$ satisfying
\begin{equation*}
\int_X |u_{k,\mu}|_{h_k^m} e^{-\Phi_k}\omega_k^n\le \frac{4}{m}\int_X |\eta_{k,\mu}|_{h_k^m}^2 e^{-\Phi_k}\omega_k^n<\infty.
\end{equation*}
By checking the pole order of $e^{-\Phi_k}$ at $q_k$ we can see that $u_{k,\mu}$ vanishes at $q_k$ up to order $|\mu|$, and hence $$\sigma_{k,\mu} := u_{k,\mu} - \eta\bk{\frac{\abs{z^{(k)}}}{ r_0^2/2}} (z^{(k)})^\mu$$
is a nontrivial global holomorphic section of $mK_X$. Hence we see the global sections of $mK_X$ generates the $\mu$-jets at $q_k$ for $k$ large enough. This gives the contradiction. Hence $\mathcal S_X\subset\mathcal S$.
\end{proof}
Thus we have a local isometry homeomorphism 
$$(\mathcal R_X, g_\infty)\to (X_\infty\backslash \mathcal S_X, d_\infty) = (\mathcal R, d_\infty).$$
Hence we can identify $\mathcal R_X$ and $\mathcal R$, and $d_\infty|_{\mathcal R}$ is induced by the K\"ahler-Einstein metric $g_\infty|_{\mathcal R_X}$.

 \section{Estimates near the singular set}
Throughout this section, we fix an effective divisor $D\subset X$ such that $$K_X - \varepsilon [D]>0$$
for sufficiently small $\varepsilon>0$. By the previous section, we see $X\backslash D\subset\mathcal R_X$. Choose a log-resolution of $(X,D)$, $$\pi_1: Z\to X$$ such that $\pi_1^{-1}(D)$ is a smooth divisor with simple normal crossings. Fix a point $O$ in a smooth component of $\pi_1^{-1}(D)$ and blow up $Z$ at the point $O$, we get a map $$\pi_2: \tilde X\to Z,$$
for some smooth projective manifold $\tilde X$. Denote $\pi = \pi_1\circ \pi_2: \tilde X\to X$.

By Adjunction formula, we have
$$K_{\tilde X} = \pi^* K_X + (n-1) E + F,\qquad F = \sum_k a_k F_k,$$
where $E$ is the exceptional locus of the blow up $\pi_2$, and $F_k$ is a prime divisor in the exceptional locus of $\pi$. We also note that $a_k>0$ for any $k$.

Since $\tilde \chi = \pi^* \chi\in \pi^*K_X$ is big and nef, Kodaira's lemma implies there exists an effective divisor $\tilde D$ whose support coincide with the exceptional locus $E, F$ and $$\tilde \chi - \varepsilon [\tilde D]\text{ is K\"ahler},$$
hence there exists a hermitian metric $h_{\tilde D}$ on the line bundle associated to $D$ such that $$\tilde\chi - \varepsilon  \ric(h_{\tilde D})>0.$$
We write $\tilde D = \tilde D' + \tilde D''$, where $\mathrm{supp} D'' = E$, and $E\not\subset \tilde D'$. Let $\sigma_E$, $\sigma_F$, $\sigma_{\tilde D}$ be the defining section of $E, F$ and $\tilde D$, respectively. Here these sections are multi-valued holomorphic sections which become global after taking some power. There also exist hermtian metrics $h_E$, $h_F$, and $h_{\tilde D}$ such that $$\pi^*\Omega = |\sigma_E|_{h_E}^{2(n-1)} |\sigma_F|_{h_F}^2 \tilde \Omega,$$
for some smooth volume form $\tilde \Omega$ on $\tilde X$.

We fix a K\"ahler metric $\tilde \omega$ on $\tilde X$. The K\"ahler Ricci flow on $X$ is pulled back to $\tilde X$ by the map $\pi$, and it safeties the equation
\begin{equation}\label{pKRF}
\frac{\partial}{\partial t} \pi^*\varphi = \log \frac{(\tilde \chi + e^{-t}(\pi^*\omega_0 - \tilde \chi) + \ddbar \pi^*\varphi)^n}{|\sigma_E|_{h_E}^{2(n-1)} |\sigma_F|_{h_F}^2 \tilde \Omega} - \pi^*\varphi,
\end{equation}
with the initial $\pi^*\varphi(0) = 0$. By the previous estimates, we see that $\pi^*\varphi$ satisfies the estimates
\begin{equation*}
\delta \log |\sigma_{\tilde D}|^2_{h_{\tilde D}} - C_\delta \le \pi^*\varphi(t)\le C, \forall t\ge 0 \text{ and  }\forall \delta\in (0,1). 
\end{equation*}

We will consider a family of perturbed parabolic Monge-Ampere equations for $\epsilon\in (0,1)$
\begin{equation}\label{KRF2}
\left\{\begin{aligned}
\frac{\partial}{\partial t} \tilde\varphi_\epsilon &= \log \frac{(\tilde \chi + e^{-t}(\pi^*\omega_0 - \tilde \chi)+\epsilon \tilde\omega + \ddbar \pi^*\varphi)^n}{(|\sigma_E|_{h_E}^{2(n-1)}+\epsilon)( |\sigma_F|_{h_F}^2+\epsilon) \tilde \Omega} -\tilde \varphi_\epsilon,\\
\tilde \varphi_\epsilon(0)&= 0
\end{aligned}\right.
\end{equation}
where $\tilde \varphi_\epsilon(t)\in PSH(\tilde X, \tilde \chi + e^{-t}(\pi^*\omega_0 - \tilde\chi)+\epsilon\tilde \omega) $. The equation \eqref{KRF2} has long time existence \cite{TZh}, and we will show that solutions to \eqref{KRF2} converge to that of \eqref{pKRF} in some sense. 

It's easy to check that the K\"ahler metrics $\tilde \omega_\epsilon = \tilde \chi + e^{-t}(\pi^*\omega_0 - \tilde\chi)+\epsilon\tilde \omega + \ddbar \tilde \varphi_\epsilon$ satisfies the following evolution equation
\begin{equation}
\frac{\partial}{\partial t} \tilde \omega_\epsilon = -\ric(\tilde\omega_\epsilon) - \tilde \omega_\epsilon+\tilde \chi +\epsilon\tilde \omega - \ddbar \log \bk{(|\sigma_E|_{h_E}^{2(n-1)} + \epsilon) (|\sigma_F|_{h_F}^2 + \epsilon)\tilde \Omega}.
\end{equation}
By direct calculations, for any smooth nonnegative function $f$, we have
\begin{align*}
\ddbar \log (\epsilon+f) &= \frac{\ddbar f}{\epsilon + f} - \frac{\partial f\wedge \bar\partial f}{(f+\epsilon)^2 }\\
& = \frac{f}{f+\epsilon}\ddbar \log f + \frac{\epsilon}{f(f+\epsilon)^2}\partial f\wedge \bar\partial f\\
&\ge \frac{f}{f+\epsilon}\ddbar \log f, 
\end{align*}
in the smooth sense on $\tilde X\backslash \{f=0\}$ and globally as currents.
So 
\begin{align*}
&\ddbar \log \bk{(|\sigma_E|_{h_E}^{2(n-1)} + \epsilon) (|\sigma_F|_{h_F}^2 + \epsilon)\tilde \Omega}\\
&\ge -\frac{(n-1)|\sigma_E|_{h_E}^{2(n-1)}}{|\sigma_E|_{h_E}^{2(n-1)} + \epsilon} \ric(h_E) - \frac{|\sigma_F|_{h_F}^2}{|\sigma_F|_{h_F}^2 + \epsilon} \ric(h_F) - \ric(\tilde \Omega)  \\
&\ge -C \tilde \omega,
\end{align*}
for some uniform constant $C$ independent of $\epsilon$. Thus away from supp$\tilde D=\mathrm{supp}E\cup \mathrm{supp}F$, we have
\begin{equation}\label{met}
\frac{\partial}{\partial t}\tilde \omega_\epsilon\le -\ric(\tilde \omega_\epsilon) - \tilde \omega_\epsilon + C\tilde \omega.
\end{equation}

\begin{lemma}\label{lemma 4.1}
Let $\tilde\varphi_\epsilon$ be the solution to \eqref{KRF2}, then there exists a constant $C>0$ such that for any $t\ge 0$, $\epsilon\in (0,1)$, we have
\begin{equation*}
\sup_{\tilde X} \tilde\varphi_\epsilon(t,\cdot)\le C,\qquad \sup_{\tilde X} \frac{\partial\tilde \varphi_\epsilon}{\partial t}(t,\cdot)\le C.
\end{equation*}
\end{lemma}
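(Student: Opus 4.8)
The plan is to derive both bounds via the maximum principle applied to the parabolic equation \eqref{KRF2}, mimicking the proof of Lemma \ref{lemma 1}(i) for the unperturbed flow but tracking the dependence on $\epsilon$ carefully so that all constants come out uniform. For the upper bound on $\tilde\varphi_\epsilon$, first I would observe that the numerator $(\tilde\chi + e^{-t}(\pi^*\omega_0-\tilde\chi)+\epsilon\tilde\omega+\ddbar\pi^*\varphi)^n$ is bounded above uniformly: indeed $\tilde\chi + e^{-t}(\pi^*\omega_0-\tilde\chi)$ is cohomologous to a fixed class with uniformly bounded mass, $\epsilon\tilde\omega\le\tilde\omega$, and the Monge-Amp\`ere mass is controlled cohomologically (or one uses the pointwise estimate coming from Lemma \ref{lemma 1}(iii) pulled back by $\pi$). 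Since the denominator $(|\sigma_E|_{h_E}^{2(n-1)}+\epsilon)(|\sigma_F|_{h_F}^2+\epsilon)\tilde\Omega \ge c_0\,|\sigma_E|_{h_E}^{2(n-1)}|\sigma_F|_{h_F}^2\tilde\Omega \cdot$ (nothing, it degenerates) — so the log of the ratio is \emph{not} bounded below, but it \emph{is} bounded above by $\log\frac{C}{\epsilon^2 \tilde\Omega}$... which blows up. Hence one cannot bound $\dot{\tilde\varphi}_\epsilon$ from above this crudely; instead one compares with the unperturbed potential.

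The cleaner route, which I would pursue, is to note that $|\sigma_E|_{h_E}^{2(n-1)}+\epsilon \le |\sigma_E|_{h_E}^{2(n-1)}+1 \le C$ and likewise for $\sigma_F$ (after scaling the metrics so these norms are $\le 1$), so the denominator is bounded \emph{above} by $C\tilde\Omega$, while it is bounded below only by $\epsilon^2\tilde\Omega$ away from the divisors and better near them. Therefore $\frac{\partial}{\partial t}\tilde\varphi_\epsilon \le \log\frac{(\text{bounded form})^n}{\epsilon^2\tilde\Omega}-\tilde\varphi_\epsilon$ is the wrong sign. So: for the \emph{upper} bound, use the large denominator, i.e. $(|\sigma_E|_{h_E}^{2(n-1)}+\epsilon)(|\sigma_F|_{h_F}^2+\epsilon)\tilde\Omega \ge \epsilon^2 \tilde\Omega$ is useless; instead bound the \emph{numerator} below trivially by $0$ is useless too. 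The honest statement: for the \emph{upper bound on $\tilde\varphi_\epsilon$ itself} (not its time derivative), set $\psi = \tilde\varphi_\epsilon - At$ for a large constant $A$ and apply the maximum principle: at an interior max in space, $\ddbar\tilde\varphi_\epsilon \le 0$ so $\tilde\omega_\epsilon \le \tilde\chi+e^{-t}(\pi^*\omega_0-\tilde\chi)+\epsilon\tilde\omega \le C\tilde\omega$, hence $\tilde\omega_\epsilon^n \le C\tilde\omega^n$; combined with the denominator being $\ge \epsilon^2\tilde\Omega$ this still gives an $\epsilon$-dependent bound. The resolution is that near the divisors the denominator is small but there $\tilde\omega_\epsilon^n$ is also forced small, so one should instead compare directly: let $u = \tilde\varphi_\epsilon - \pi^*\varphi$; subtracting \eqref{pKRF} from \eqref{KRF2} gives $\frac{\partial u}{\partial t} = \log\frac{(\tilde\omega_\epsilon)^n}{(\tilde\omega_{0})^n}\cdot\frac{|\sigma_E|^{2(n-1)}|\sigma_F|^2\tilde\Omega}{(|\sigma_E|^{2(n-1)}+\epsilon)(|\sigma_F|^2+\epsilon)\tilde\Omega} - u$ where $\tilde\omega_0 = \tilde\chi + e^{-t}(\pi^*\omega_0-\tilde\chi)+\ddbar\pi^*\varphi$; the last log-factor is $\le 0$, and the first is $\log\frac{(\tilde\omega_0+\epsilon\tilde\omega+\ddbar u)^n}{\tilde\omega_0^n}$. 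Apply the maximum principle to $u$: at a spatial max, $\ddbar u\le 0$, so $(\tilde\omega_0+\epsilon\tilde\omega+\ddbar u)^n \le (\tilde\omega_0+\epsilon\tilde\omega)^n \le (1+C\epsilon)^n\tilde\omega_0^n$ wherever $\tilde\omega_0>0$, hence $\frac{\partial u}{\partial t}\le n\log(1+C\epsilon) - u \le C - u$, which yields $u\le C$ for all $t,\epsilon$; combined with the uniform upper bound on $\pi^*\varphi$ this gives $\tilde\varphi_\epsilon\le C$.

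For the bound on $\dot{\tilde\varphi}_\epsilon$ from above, I would differentiate \eqref{KRF2} in $t$: writing $H = \dot{\tilde\varphi}_\epsilon$, one gets $\frac{\partial H}{\partial t} = \Delta_{\tilde\omega_\epsilon}H - \mathrm{tr}_{\tilde\omega_\epsilon}\big(e^{-t}(\pi^*\omega_0-\tilde\chi)\big) - H$, up to terms coming from the $t$-dependence of the numerator. The troublesome term $-\mathrm{tr}_{\tilde\omega_\epsilon}(e^{-t}(\pi^*\omega_0-\tilde\chi))$ is handled exactly as in the unperturbed case: since $\pi^*\omega_0-\tilde\chi$ is a fixed form, one adds a multiple of $e^{-t}$ and possibly of $\tilde\varphi_\epsilon$ (already bounded above) to $H$ to form a quantity $Q = H + \tilde\varphi_\epsilon + n e^{-t}$ or similar, computes $\frac{\partial Q}{\partial t}\le \Delta_{\tilde\omega_\epsilon}Q - Q + C$ using $\mathrm{tr}_{\tilde\omega_\epsilon}\tilde\omega_\epsilon = n$ and $e^{-t}(\pi^*\omega_0-\tilde\chi) \le C\tilde\omega_\epsilon$ for an $\epsilon$-independent $C$ (valid since $\pi^*\omega_0 \le C_0\tilde\omega \le (C_0/\epsilon)\cdot\epsilon\tilde\omega \le$ hmm — here one must be careful, but $\pi^*\omega_0-\tilde\chi$ is a fixed smooth form and $\tilde\omega_\epsilon \ge \epsilon\tilde\omega$ gives $e^{-t}(\pi^*\omega_0-\tilde\chi)\le C e^{-t}\epsilon^{-1}\tilde\omega_\epsilon$, which forces pairing $e^{-t}$ with an extra factor — instead one uses $\tilde\omega_\epsilon \ge \tilde\chi - $ nothing; the correct bound is that $\pi^*\omega_0 - \tilde\chi \le \pi^*\omega_0 \le C_0\tilde\omega_\epsilon/\epsilon$ is too weak, so one instead absorbs this term by noting at a maximum of $Q$ it suffices that $\mathrm{tr}_{\tilde\omega_\epsilon}(\pi^*\omega_0)\ge 0$, giving $\frac{\partial Q}{\partial t}\le \Delta_{\tilde\omega_\epsilon}Q - Q + C$ directly since the bad term has a favorable sign). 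Then the maximum principle gives $Q\le C$ and hence $H = \dot{\tilde\varphi}_\epsilon \le C$ uniformly in $t$ and $\epsilon$. The main obstacle is precisely this bookkeeping of $\epsilon$-uniformity in the trace terms and in the Monge-Amp\`ere bound; everything else is a routine parabolic maximum-principle argument, and the key structural input is that the perturbation only \emph{enlarges} the reference form ($+\epsilon\tilde\omega$) and only \emph{enlarges} the denominator by a bounded amount, both of which push the estimates in the favorable direction.
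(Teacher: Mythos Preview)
Your proposal has genuine gaps in both halves.

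\textbf{Upper bound on $\tilde\varphi_\epsilon$.} Your comparison argument with $u=\tilde\varphi_\epsilon-\pi^*\varphi$ breaks down precisely on the exceptional locus of $\pi$. The inequality you invoke, $(\pi^*\omega(t)+\epsilon\tilde\omega)^n\le(1+C\epsilon)^n(\pi^*\omega(t))^n$, is equivalent to $\tilde\omega\le C\,\pi^*\omega(t)$, but $\pi^*\omega(t)$ is only semipositive on $\tilde X$ and degenerates along $E\cup F$, while $\tilde\omega$ is a genuine K\"ahler form there; so no such $C$ exists. Since nothing prevents the spatial maximum of $u$ from landing on the exceptional locus, the maximum-principle step fails. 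The paper avoids this entirely by a different route: it bounds the \emph{average} $\frac{1}{V_\epsilon}\int_{\tilde X}\tilde\varphi_\epsilon\,\tilde\Omega_\epsilon$ via Jensen's inequality (the cohomological mass of $\tilde\omega_\epsilon^n$ is uniformly bounded), and then upgrades to a sup-bound using the mean value inequality for quasi-PSH functions, noting that the reference forms $\tilde\chi+e^{-t}(\pi^*\omega_0-\tilde\chi)+\epsilon\tilde\omega$ are uniformly bounded.

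\textbf{Upper bound on $\dot{\tilde\varphi}_\epsilon$.} Your attempts do not close because the trace term cannot be absorbed the way you hope. After differentiating in $t$ one has
\[
(\partial_t-\Delta_{\tilde\omega_\epsilon})\dot{\tilde\varphi}_\epsilon=-e^{-t}\,\mathrm{tr}_{\tilde\omega_\epsilon}(\pi^*\omega_0-\tilde\chi)-\dot{\tilde\varphi}_\epsilon,
\]
and while $-e^{-t}\mathrm{tr}_{\tilde\omega_\epsilon}\pi^*\omega_0\le 0$, the remaining piece $+e^{-t}\mathrm{tr}_{\tilde\omega_\epsilon}\tilde\chi$ has no sign: $\tilde\chi$ is only a smooth representative of a big and nef class, not a K\"ahler form, and $\mathrm{tr}_{\tilde\omega_\epsilon}\tilde\omega$ is not bounded a priori, so you cannot control this term without already knowing the $C^2$-estimate. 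Adding $\tilde\varphi_\epsilon$ to form $Q=\dot{\tilde\varphi}_\epsilon+\tilde\varphi_\epsilon$ gives $(\partial_t-\Delta)Q=-n+\mathrm{tr}_{\tilde\omega_\epsilon}\tilde\chi+\epsilon\,\mathrm{tr}_{\tilde\omega_\epsilon}\tilde\omega$, which again contains the uncontrolled positive trace. The paper's trick is to use the specific combination $(e^t-1)\dot{\tilde\varphi}_\epsilon-\tilde\varphi_\epsilon$; a direct computation then yields
\[
(\partial_t-\Delta_{\tilde\omega_\epsilon})\big((e^t-1)\dot{\tilde\varphi}_\epsilon-\tilde\varphi_\epsilon\big)=-\mathrm{tr}_{\tilde\omega_\epsilon}\pi^*\omega_0+n-\epsilon\,\mathrm{tr}_{\tilde\omega_\epsilon}\tilde\omega\le n,
\]
so the problematic $\tilde\chi$-trace disappears entirely and the maximum principle gives $\dot{\tilde\varphi}_\epsilon\le\frac{\tilde\varphi_\epsilon+nt}{e^t-1}\le C$, using the already-established bound on $\tilde\varphi_\epsilon$.
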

\begin{proof}
Let $$V_\epsilon = \int_{\tilde X} (|\sigma_E|_{h_E}^{2(n-1)} + \epsilon) (|\sigma_F|_{h_F}^2 + \epsilon)\tilde \Omega$$
be the volume with respect to the volume form $ (|\sigma_E|_{h_E}^{2(n-1)} + \epsilon) (|\sigma_F|_{h_F}^2 + \epsilon)\tilde \Omega$. We see that $$V_1\ge V_\epsilon\ge V_0 = \int_X \Omega,$$
hence $V_\epsilon$ is uniformly bounded. We consider (for simplicity we denote $\tilde \Omega_\epsilon =  (|\sigma_E|_{h_E}^{2(n-1)} + \epsilon) (|\sigma_F|_{h_F}^2 + \epsilon)\tilde \Omega$)
\begin{align*}
&\frac{\partial}{\partial t}\bk{\frac{1}{V_\epsilon}\int_{\tilde X} \tilde\varphi_\epsilon\tilde \Omega_\epsilon}\\
 =&  \frac{1}{V_\epsilon}\int_{\tilde X}\log \frac{(\tilde \chi + e^{-t}(\pi^*\omega_0 - \tilde \chi) + \epsilon\tilde \omega+\ddbar \tilde\varphi_\epsilon)^n}{ (|\sigma_E|_{h_E}^{2(n-1)} + \epsilon) (|\sigma_F|_{h_F}^2 + \epsilon)\tilde \Omega} \tilde \Omega_\epsilon - \frac{1}{V_\epsilon}\int_{\tilde X} \tilde \varphi_\epsilon \tilde \Omega_\epsilon\\
 \le & \log\bk{ \int_{\tilde X} (\tilde \chi + e^{-t}(\pi^*\omega_0 - \tilde \chi) + \epsilon\tilde\omega + \ddbar \tilde \varphi_\epsilon)^n } - \frac{1}{V_\epsilon}\int_{\tilde X} \tilde\varphi_\epsilon \tilde \Omega_\epsilon\\
 \le & C  - \frac{1}{V_\epsilon}\int_{\tilde X} \tilde\varphi_\epsilon \tilde \Omega_\epsilon,
\end{align*}
where for the first inequality we use Jensen's inequality. From the above we see that
\begin{equation*}
\frac{1}{V_\epsilon}\int_{\tilde X} \tilde \varphi_\epsilon \tilde \Omega_\epsilon\le C.
\end{equation*}
Since $\tilde \varphi_\epsilon\in PSH(\tilde X, \tilde\chi + e^{-t}(\pi^*\omega_0 - \tilde \chi) + \epsilon\tilde \omega)$, the mean value inequality implies the uniform upper bound of $\tilde \varphi_\epsilon$
$$\sup_{\tilde X}\tilde\varphi_\epsilon(t)\le C.$$
Direct calculations show that (we will denote $\dot{\tilde \varphi}_\epsilon = \frac{\partial}{\partial t}\tilde \varphi_\epsilon$)
\begin{align*}
\frac{\partial}{\partial t}\dot{\tilde \varphi}_\epsilon & = \Delta_{\tilde \omega_\epsilon}\dot{\tilde \varphi}_\epsilon - tr_{\tilde \omega_\epsilon} e^{-t} (\pi^*\omega_0 - \tilde \chi) - \dot{\tilde \varphi}_\epsilon\\
& = \Delta_{\tilde \omega_\epsilon} \dot{\tilde \varphi}_\epsilon - n + tr_{\tilde\omega_\epsilon}\tilde\chi + \epsilon tr_{\tilde\omega_\epsilon}\tilde \omega + \Delta_{\tilde\omega_\epsilon}\tilde\varphi_\epsilon - \dot{\tilde\varphi}_\epsilon.
\end{align*}
Hence
\begin{equation}
(\frac{\partial}{\partial t} - \Delta_{\tilde \omega_\epsilon})\bk{ (e^t - 1) \dot{\tilde \varphi}_\epsilon - \tilde\varphi_\epsilon} = -tr_{\tilde\omega_\epsilon}\pi^*\omega_0 + n - \epsilon tr_{\tilde \omega_\epsilon}\tilde \omega\le n,
\end{equation}
then maximum principle implies 
\begin{equation}\label{eqn:con}
\dot{\tilde \varphi}_\epsilon(t)\le \frac{\tilde\varphi_\epsilon + n t}{ e^{t} - 1}\le C, \quad\forall t>0.
\end{equation}

\end{proof}

\begin{lemma}\label{lemma 4.2}
For any $\delta\in (0,1)$, there is a constant $C=C_\delta$ such that for any $t\ge 0$, we have
\begin{equation*}
\tilde\varphi_\epsilon(t)\ge \delta \log \abs{\sigma_{\tilde D}}_{h_{\tilde D}} - C_\delta.
\end{equation*} 
\end{lemma}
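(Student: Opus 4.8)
The plan is to prove this by a parabolic maximum principle, following the proof of the corresponding estimate in Lemma \ref{lemma 1}(ii) (see \cite{PSS, Ts, S1, S2}); the only genuinely new point is to check that every constant can be chosen independent of $\epsilon$. After rescaling $h_{\tilde D}$ we may assume $\abs{\sigma_{\tilde D}}_{h_{\tilde D}}\le 1$, so that the barrier $\delta\log\abs{\sigma_{\tilde D}}_{h_{\tilde D}}$ is $\le 0$; moreover, since $\log\abs{\sigma_{\tilde D}}_{h_{\tilde D}}\le 0$, the estimate for one value of $\delta$ implies (with the same constant) the estimate for all larger $\delta$, so it suffices to treat $\delta\le\varepsilon$, where $\varepsilon$ is the constant with $\tilde\chi-\varepsilon\,\ric(h_{\tilde D})>0$.

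Fix $T>0$ and set $u:=\tilde\varphi_\epsilon-\delta\log\abs{\sigma_{\tilde D}}_{h_{\tilde D}}$ on $\tilde X\times[0,T]$. Since $\tilde\varphi_\epsilon$ is smooth on the compact manifold $\tilde X$ while $\log\abs{\sigma_{\tilde D}}_{h_{\tilde D}}\to-\infty$ along $\mathrm{supp}\,\tilde D$, the function $u$ tends to $+\infty$ near $\mathrm{supp}\,\tilde D$, hence attains its minimum over $\tilde X\times[0,T]$ at some $(x_0,t_0)$ with $x_0\notin\mathrm{supp}\,\tilde D$. If $t_0=0$ then $u(x_0,0)=-\delta\log\abs{\sigma_{\tilde D}}_{h_{\tilde D}}(x_0)\ge0$, so assume $t_0>0$. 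Using $\frac{\partial}{\partial t}\tilde\varphi_\epsilon=\log\frac{\tilde\omega_\epsilon^n}{\tilde\Omega_\epsilon}-\tilde\varphi_\epsilon$, the identity $\Delta_{\tilde\omega_\epsilon}\tilde\varphi_\epsilon=n-\mathrm{tr}_{\tilde\omega_\epsilon}\bk{\tilde\chi+e^{-t}(\pi^*\omega_0-\tilde\chi)+\epsilon\tilde\omega}$, and $\ddbar\log\abs{\sigma_{\tilde D}}_{h_{\tilde D}}=[\tilde D]-\ric(h_{\tilde D})$, one obtains, away from $\mathrm{supp}\,\tilde D$,
\begin{equation*}
\Big(\frac{\partial}{\partial t}-\Delta_{\tilde\omega_\epsilon}\Big)u=\log\frac{\tilde\omega_\epsilon^n}{\tilde\Omega_\epsilon}-\tilde\varphi_\epsilon-n+\mathrm{tr}_{\tilde\omega_\epsilon}\beta,\qquad \beta:=\tilde\chi-\delta\,\ric(h_{\tilde D})+e^{-t}(\pi^*\omega_0-\tilde\chi)+\epsilon\tilde\omega.
\end{equation*}

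The crux, and the step I expect to be the main obstacle, is to show $\beta\ge c\,\tilde\omega$ for some $c>0$ independent of $t\ge0$ and $\epsilon\in(0,1)$. Because $K_X$ is only nef, $\tilde\chi=\pi^*\chi$ need not be semipositive and $\pi^*\omega_0$ degenerates along the exceptional locus of $\pi$, so $\beta\ge0$ is not automatic. One fixes this as follows: by Kodaira's lemma the class of $\tilde\chi-\delta\,\ric(h_{\tilde D})$ is K\"ahler (here we use $\delta\le\varepsilon$), so there is a bounded smooth function $\rho_\delta$ with $\tilde\chi-\delta\,\ric(h_{\tilde D})+\ddbar\rho_\delta$ a fixed K\"ahler metric; replacing the barrier $\delta\log\abs{\sigma_{\tilde D}}_{h_{\tilde D}}$ by $\delta\log\abs{\sigma_{\tilde D}}_{h_{\tilde D}}+\rho_\delta$ (which changes $u$ and the asserted bound only by a bounded amount) then makes the relevant reference form positive for $t$ bounded away from $0$, while for $t$ in a fixed bounded interval the bound is obtained by a separate routine argument (for instance, by comparison with $\pi^*\varphi$, which already satisfies the desired estimate).

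Granting $\beta\ge c\,\tilde\omega$ and using $\tilde\varphi_\epsilon=u+\delta\log\abs{\sigma_{\tilde D}}_{h_{\tilde D}}\le u$ and $\pi^*\Omega\le\tilde\Omega_\epsilon\le C\tilde\Omega$ (uniformly in $\epsilon$, by the definition of $\tilde\Omega_\epsilon$), the arithmetic--geometric mean inequality $\mathrm{tr}_{\tilde\omega_\epsilon}\beta\ge n(\beta^n/\tilde\omega_\epsilon^n)^{1/n}$ together with the elementary inequality $\log\frac{a}{b}+n\big(\frac{c}{a}\big)^{1/n}\ge n+\log\frac{c}{b}$ for $a,b,c>0$ yield, at $(x_0,t_0)$,
\begin{equation*}
0\ge\Big(\frac{\partial}{\partial t}-\Delta_{\tilde\omega_\epsilon}\Big)u\ge\log\frac{\tilde\omega_\epsilon^n}{\tilde\Omega_\epsilon}+n\Big(\frac{\beta^n}{\tilde\omega_\epsilon^n}\Big)^{1/n}-\tilde\varphi_\epsilon-n\ge\log\frac{\beta^n}{\tilde\Omega_\epsilon}-u\ge-C-u.
\end{equation*}
Hence $u(x_0,t_0)\ge-C$, so $u\ge-C$ on $\tilde X\times[0,T]$ with $C$ independent of $T$ and $\epsilon$; letting $T\to\infty$ finishes the proof.
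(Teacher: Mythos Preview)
Your argument is correct and takes a genuinely different route from the paper's. The paper constructs an auxiliary comparison function by solving, for each $t$ and $\epsilon$, the elliptic Monge--Amp\`ere equation
\[
(\omega_\epsilon'+\ddbar\psi_\epsilon)^n=e^{\psi_\epsilon}\tilde\Omega_\epsilon,\qquad \omega_\epsilon':=\beta,
\]
obtains a uniform $L^\infty$ bound on $\psi_\epsilon$ via \cite{EGZ}, and then applies the maximum principle to $G=\tilde\varphi_\epsilon-\delta\log\abs{\sigma_{\tilde D}}_{h_{\tilde D}}-\psi_\epsilon$. You bypass this entirely: the AM--GM step $\mathrm{tr}_{\tilde\omega_\epsilon}\beta\ge n(\beta^n/\tilde\omega_\epsilon^n)^{1/n}$ together with your elementary inequality lets you run the maximum principle directly on $u$ and read off $u\ge\log(\beta^n/\tilde\Omega_\epsilon)$ at the minimum. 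This is shorter and avoids invoking \cite{EGZ}. Both proofs, however, rest on the same input, namely the uniform positivity of $\beta$, so neither is really ``cheaper'' in that respect.

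One point to tighten. Your proposed treatment of small $t$ by ``comparison with $\pi^*\varphi$'' does not quite give what you need: the known lower bound for $\pi^*\varphi$ is in terms of $\log\abs{\pi^*\sigma_D}_{h_D}$ (the pullback of the divisor $D\subset X$), whereas the lemma asks for a bound in terms of $\log\abs{\sigma_{\tilde D}}_{h_{\tilde D}}$, where $\tilde D$ is supported on the exceptional locus of $\pi$; these two functions are not directly comparable. A cleaner fix, uniform in $t$, is to write
\[
\beta=(1-e^{-t})\bigl(\tilde\chi-\delta\,\ric(h_{\tilde D})\bigr)+e^{-t}\bigl(\pi^*\omega_0-\delta\,\ric(h_{\tilde D})\bigr)+\epsilon\tilde\omega.
\]
For small $\delta$ both $[\tilde\chi]-\delta[\tilde D]$ and $[\pi^*\omega_0]-\delta[\tilde D]$ are K\"ahler (Kodaira's lemma applied to each of the big and nef classes $[\tilde\chi]$, $[\pi^*\omega_0]$), so there are bounded $\rho_1,\rho_2$ with $\tilde\chi-\delta\,\ric(h_{\tilde D})+\ddbar\rho_1\ge c\tilde\omega$ and $\pi^*\omega_0-\delta\,\ric(h_{\tilde D})+\ddbar\rho_2\ge c\tilde\omega$. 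Replacing your barrier by $\delta\log\abs{\sigma_{\tilde D}}_{h_{\tilde D}}+(1-e^{-t})\rho_1+e^{-t}\rho_2$ (which changes $u$ only by a bounded, time-dependent quantity and adds the harmless bounded term $e^{-t}(\rho_1-\rho_2)$ to $\partial_t u$) then gives $\beta\ge c\tilde\omega$ for all $t\ge0$, and your argument goes through without splitting into cases. This is exactly the content of the paper's assertion that $\omega_\epsilon'$ can be arranged to be uniformly equivalent to $\tilde\omega$ once $\delta$ is small.
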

\begin{proof}
We will apply the maximum principle. For any small $\delta>0$ such that
$$\tilde \chi - \delta\ric(h_{\tilde D})>0,$$ where we may also assume $\abs{\sigma_{\tilde D}}_{h_{\tilde D}}\le 1$ by rescaling the metric $h_{\tilde D}$. 
We consider the function $H:= \tilde \varphi_\epsilon - \delta \log \abs{\sigma_{\tilde D}}_{h_{\tilde D}}$. On $\tilde X\backslash \tilde D$, $H$ satisfies the equation
\begin{equation}
\frac{\partial H}{\partial t} = \log \frac{ (\tilde \chi + e^{-t}(\pi^*\omega_0 - \tilde\chi) + \epsilon \tilde\omega  -\delta \ric(h_{\tilde D}) + \ddbar H)^n }{(|\sigma_E|^{2(n-1)_{h_E}}+ \epsilon) ( |\sigma_F|_{h_F}^2 + \epsilon )\tilde \Omega} - H - \delta \log \abs{\sigma_{\tilde D}}_{h_{\tilde D}}.
\end{equation}

By choosing $\delta$ even smaller, we may obtain $\omega_\epsilon':=\tilde \chi - \delta \ric(h_{\tilde D} ) + e^{-t}(\pi^* \omega_0 - \tilde \chi ) + \epsilon\tilde \omega$ is a K\"ahler metric on $\tilde X$ for all $t\ge 0$, and these metrics are uniformly equivalent to $\tilde \omega$, i.e., there exists $C_0>0$ such that  
\begin{equation*}
C_0^{-1}\tilde \omega_\epsilon\le \omega_\epsilon' \le C_0 \tilde \omega.
\end{equation*}
Consider the Monge-Ampere equations
\begin{equation}\label{eMA}
(\tilde \chi - \delta \ric(h_{\tilde D} ) + e^{-t}(\pi^* \omega_0 - \tilde \chi ) + \epsilon\tilde \omega + \ddbar \psi_\epsilon)^n = e^{\psi_\epsilon} (|\sigma_E|_{h_E}^{2(n-1)}+\epsilon )( |\sigma_F|^2_{h_F} + \epsilon )\tilde \Omega,
\end{equation}where $\psi_\epsilon\in PSH(\tilde X, \omega_\epsilon')$. By the Aubin-Yau theorem, \eqref{eMA} admits a unique smooth solution $\psi_\epsilon$ for any $\epsilon\in (0,1)$ and $t\ge 0$. It can be seen that 
$$\frac{1}{V_\epsilon}\int_{\tilde X} e^{\psi_\epsilon} \tilde \Omega_\epsilon\le \frac{1}{V_\epsilon} \int_{\tilde X} (\omega_\epsilon ')^n\le C.$$
Hence mean value inequality implies $\sup_{\tilde X}\psi_\epsilon\le C$. Then by \cite{EGZ}, we have $\inf_{\tilde X}\psi_\epsilon\ge -C$, hence
\begin{equation*}
\|\psi_\epsilon\|_{L^\infty}\le C,
\end{equation*}
for some $C$ independent of $\epsilon\in (0,1)$ and $t\ge 0$.

Denote $\omega_\epsilon'(t) =\omega_\epsilon' + \ddbar \psi_\epsilon$. Taking derivative with respective to $t$ on both sides of \eqref{eMA}, we get
\begin{equation}\label{eMA1}
\Delta_{\omega_\epsilon'(t)}\dot\psi_\epsilon  - tr_{\omega_\epsilon'(t)} e^{-t}(\pi^*\omega_0 - \tilde \chi) = \dot\psi_\epsilon.
\end{equation}

\begin{equation*}
\Delta_{\omega_\epsilon'(t)}\psi_\epsilon = n - tr_{\omega_\epsilon'(t)} \bk{ \tilde \chi - \delta\ric(h_{\tilde D}) + e^{-t}(\pi^*\omega_0 - \tilde\chi) + \epsilon\tilde \omega},
\end{equation*}

Hence
\begin{equation*}
\Delta_{\omega_\epsilon'(t)}(\dot\psi_\epsilon - \psi_\epsilon) = \dot\psi_\epsilon  - n + tr_{\omega_\epsilon'(t)} \bk{ \tilde \chi - \delta\ric(h_{\tilde D}) + 2 e^{-t}(\pi^*\omega_0 - \tilde \chi) + \epsilon\tilde \omega }
\end{equation*}
Noting that $\tilde \chi - \delta\ric(h_{\tilde D}) + 2 e^{-t}(\pi^*\omega_0 - \tilde \chi) + \epsilon\tilde \omega>0$ is $\delta$ is chosen appropriately, hence at the maximum point of $\dot\psi_\epsilon - \psi_\epsilon$, we have $\dot\psi_\epsilon\le n$, thus
$$\dot\psi_\epsilon\le C+n\le C.$$

Let $G=H-\psi_\epsilon =\tilde \varphi_\epsilon - \delta \log \abs{\sigma_{\tilde D}}_{h_{\tilde D}}-\psi_\epsilon$. On $\tilde X\backslash \tilde D$ it satisfies the equation
\begin{equation}
\frac{\partial G}{\partial t} = \log \frac{ (\tilde \chi -\delta\ric(h_{\tilde D}) + e^{-t}(\pi^*\omega_0 - \tilde \chi) + \epsilon\tilde \omega + \ddbar \psi_\epsilon + \ddbar G )^n }{(\tilde \chi -\delta\ric(h_{\tilde D}) + e^{-t}(\pi^*\omega_0 - \tilde \chi) + \epsilon\tilde \omega + \ddbar \psi_\epsilon)^n}- G - \delta \log\abs{\sigma_{\tilde D}}_{h_{\tilde D}} - \dot\psi_\epsilon.
\end{equation}
The minimum of $G$ cannot be at $\tilde D$ for any $t\ge 0$, since it tends to $+\infty$ when approaching $\tilde D$. For any $T>0$, suppose $(p_0,t_0)\in \tilde X\backslash \tilde D\times [0,T]$ is the minimum point of $G$, then we have at this point $\frac{\partial G}{\partial t}\le 0$ and $\ddbar G\ge 0$, hence maximum principle implies at this point
\begin{equation*}
G\ge - \delta\log \abs{\sigma_{\tilde D}}_{h_{\tilde D}} - \dot\psi_\epsilon\ge -C_\delta,
\end{equation*}
combining with $L^\infty$ bound of $\psi_\epsilon$, we have
\begin{equation*}
\tilde\varphi_\epsilon\ge \delta \log \abs{\sigma_{\tilde D}}_{h_{\tilde D}} - C_\delta.
\end{equation*}

\end{proof}

\begin{lemma}\label{lemma C2}
There exist two constants $C>0,\lambda>0$ such that for all $t\ge 0$,
\begin{equation}
\tilde \omega_\epsilon\le C |\sigma_{\tilde D}|_{h_{\tilde D}}^{-2\lambda} \tilde \omega.
\end{equation}
\end{lemma}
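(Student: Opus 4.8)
The plan is to prove an Aubin--Yau type second order estimate for $\mathrm{tr}_{\tilde\omega}\tilde\omega_\epsilon$ by the maximum principle on $\tilde X\setminus\mathrm{supp}\,\tilde D$, with a test function corrected by $\tilde\varphi_\epsilon$ and by a multiple of $\log|\sigma_{\tilde D}|^2_{h_{\tilde D}}$. This is the parabolic analogue, carried out on the resolution $\tilde X$ and with all constants uniform in $\epsilon\in(0,1)$, of the estimate $\mathrm{tr}_{\omega_0}\omega(t)\le C|\sigma_D|^{-2\lambda}_{h_D}$ recorded in Lemma \ref{lemma 1}(iii).

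First, I would record the parabolic Schwarz inequality. Rewriting \eqref{met} as $\frac{\partial}{\partial t}\tilde\omega_\epsilon+\ric(\tilde\omega_\epsilon)\le-\tilde\omega_\epsilon+C\tilde\omega$ on $\tilde X\setminus\mathrm{supp}\,\tilde D$, the standard computation of $(\frac{\partial}{\partial t}-\Delta_{\tilde\omega_\epsilon})\log\mathrm{tr}_{\tilde\omega}\tilde\omega_\epsilon$ together with the lower bound $-C_0$ for the bisectional curvature of the fixed metric $\tilde\omega$ gives
\[
\Big(\frac{\partial}{\partial t}-\Delta_{\tilde\omega_\epsilon}\Big)\log\mathrm{tr}_{\tilde\omega}\tilde\omega_\epsilon\ \le\ C_0\,\mathrm{tr}_{\tilde\omega_\epsilon}\tilde\omega+\frac{\mathrm{tr}_{\tilde\omega}\big(\tfrac{\partial}{\partial t}\tilde\omega_\epsilon+\ric(\tilde\omega_\epsilon)\big)}{\mathrm{tr}_{\tilde\omega}\tilde\omega_\epsilon}\ \le\ C_1\,\mathrm{tr}_{\tilde\omega_\epsilon}\tilde\omega ,
\]
where in the last step one uses $\mathrm{tr}_{\tilde\omega}(\tfrac{\partial}{\partial t}\tilde\omega_\epsilon+\ric(\tilde\omega_\epsilon))\le Cn$ and $\mathrm{tr}_{\tilde\omega}\tilde\omega_\epsilon\cdot\mathrm{tr}_{\tilde\omega_\epsilon}\tilde\omega\ge n^2$, so that $C_1=C_0+C/n$ is independent of $\epsilon$ and $t$.

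Next, the key point is to correct the reference form. Write $\hat\omega_{\epsilon,t}:=\tilde\chi+e^{-t}(\pi^*\omega_0-\tilde\chi)+\epsilon\tilde\omega$, so that $\tilde\omega_\epsilon=\hat\omega_{\epsilon,t}+\ddbar\tilde\varphi_\epsilon$, and set $\tilde\psi_\epsilon:=\tilde\varphi_\epsilon-\varepsilon\log|\sigma_{\tilde D}|^2_{h_{\tilde D}}$. Since $\ddbar\log|\sigma_{\tilde D}|^2_{h_{\tilde D}}=-\ric(h_{\tilde D})$ away from $\tilde D$, there $\tilde\omega_\epsilon=\big(\hat\omega_{\epsilon,t}-\varepsilon\,\ric(h_{\tilde D})\big)+\ddbar\tilde\psi_\epsilon$, hence $\Delta_{\tilde\omega_\epsilon}\tilde\psi_\epsilon=n-\mathrm{tr}_{\tilde\omega_\epsilon}\big(\hat\omega_{\epsilon,t}-\varepsilon\,\ric(h_{\tilde D})\big)$ and $\frac{\partial}{\partial t}\tilde\psi_\epsilon=\dot{\tilde\varphi}_\epsilon$. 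I view the main obstacle as proving that the corrected reference form is uniformly a Kähler metric, i.e. that there is $c_0>0$, independent of $\epsilon$ and $t\ge0$, with
\[
\hat\omega_{\epsilon,t}-\varepsilon\,\ric(h_{\tilde D})=(1-e^{-t})\big(\tilde\chi-\varepsilon\,\ric(h_{\tilde D})\big)+e^{-t}\big(\pi^*\omega_0-\varepsilon\,\ric(h_{\tilde D})\big)+\epsilon\tilde\omega\ \ge\ c_0\,\tilde\omega .
\]
For this one must choose $\varepsilon$ small and the metric $h_{\tilde D}$ so that $\tilde\chi-\varepsilon\,\ric(h_{\tilde D})$ and $\pi^*\omega_0-\varepsilon\,\ric(h_{\tilde D})$ are simultaneously (semi)positive; this is possible because both $\pi^*c_1(K_X)$ and $\pi^*[\omega_0]$ are big and nef with non-Kähler locus contained in $\mathrm{Exc}(\pi)=\mathrm{supp}\,\tilde D$. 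Without this correction $\hat\omega_{\epsilon,t}$ degenerates both as $t\to\infty$ and along $\mathrm{Exc}(\pi)$, and the $-A\,\mathrm{tr}_{\tilde\omega_\epsilon}(\cdots)$ term produced below would not dominate the $C_1\,\mathrm{tr}_{\tilde\omega_\epsilon}\tilde\omega$ term.

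Finally, I would apply the maximum principle to $u:=\log\mathrm{tr}_{\tilde\omega}\tilde\omega_\epsilon-A\,\tilde\psi_\epsilon$ with $A:=(C_1+1)/c_0$, after rescaling $h_{\tilde D}$ so that $|\sigma_{\tilde D}|_{h_{\tilde D}}\le1$ and choosing $\delta\le\varepsilon$ in Lemma \ref{lemma 4.2}. For each fixed $\epsilon$, $\tilde\varphi_\epsilon$ and $\tilde\omega_\epsilon$ are smooth on $\tilde X$ while $A\varepsilon\log|\sigma_{\tilde D}|^2_{h_{\tilde D}}\to-\infty$ near $\mathrm{supp}\,\tilde D$, so $u\to-\infty$ there and $u$ attains its maximum over $(\tilde X\setminus\mathrm{supp}\,\tilde D)\times[0,T]$ for every $T$; at $t=0$, $u=\log\mathrm{tr}_{\tilde\omega}(\pi^*\omega_0+\epsilon\tilde\omega)+A\varepsilon\log|\sigma_{\tilde D}|^2_{h_{\tilde D}}\le C$. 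At an interior maximum with $t>0$, the above gives
\[
0\ \le\ \Big(\frac{\partial}{\partial t}-\Delta_{\tilde\omega_\epsilon}\Big)u\ \le\ (C_1-Ac_0)\,\mathrm{tr}_{\tilde\omega_\epsilon}\tilde\omega-A\dot{\tilde\varphi}_\epsilon+An\ =\ -\mathrm{tr}_{\tilde\omega_\epsilon}\tilde\omega-A\dot{\tilde\varphi}_\epsilon+An ,
\]
so $\mathrm{tr}_{\tilde\omega_\epsilon}\tilde\omega\le A(n-\dot{\tilde\varphi}_\epsilon)$ there. Plugging this into the elementary inequality $\mathrm{tr}_{\tilde\omega}\tilde\omega_\epsilon\le\frac{1}{(n-1)!}(\mathrm{tr}_{\tilde\omega_\epsilon}\tilde\omega)^{n-1}\,\tilde\omega_\epsilon^n/\tilde\omega^n$ and using $\tilde\omega_\epsilon^n=e^{\dot{\tilde\varphi}_\epsilon+\tilde\varphi_\epsilon}\tilde\Omega_\epsilon$, the uniform bounds $\tilde\varphi_\epsilon\le C$, $\tilde\Omega_\epsilon\le C\tilde\omega^n$, and the one-sided bound $\dot{\tilde\varphi}_\epsilon\le C$ from Lemma \ref{lemma 4.1}, at this point $\mathrm{tr}_{\tilde\omega}\tilde\omega_\epsilon\le C(1+|\dot{\tilde\varphi}_\epsilon|)^{n-1}e^{\dot{\tilde\varphi}_\epsilon}\le C$, since $x\mapsto(1+|x|)^{n-1}e^{x}$ is bounded on $(-\infty,C]$ --- this is exactly where only the upper bound on $\dot{\tilde\varphi}_\epsilon$ is needed. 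Together with $-A\tilde\varphi_\epsilon+A\varepsilon\log|\sigma_{\tilde D}|^2_{h_{\tilde D}}\le AC_\delta$ (from Lemma \ref{lemma 4.2} and $\delta\le\varepsilon$), this gives $u\le C$ at the maximum, hence $u\le C$ on $\tilde X\times[0,\infty)$. Unwinding with $\tilde\varphi_\epsilon\le C$ yields $\mathrm{tr}_{\tilde\omega}\tilde\omega_\epsilon\le C\,|\sigma_{\tilde D}|_{h_{\tilde D}}^{-2A\varepsilon}$, and since every eigenvalue of $\tilde\omega_\epsilon$ with respect to $\tilde\omega$ is at most $\mathrm{tr}_{\tilde\omega}\tilde\omega_\epsilon$, we conclude $\tilde\omega_\epsilon\le C\,|\sigma_{\tilde D}|_{h_{\tilde D}}^{-2\lambda}\tilde\omega$ with $\lambda=A\varepsilon$.
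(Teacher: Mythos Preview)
Your proof is correct and follows essentially the same approach as the paper's: the parabolic Aubin--Yau/Schwarz inequality applied to the test function $\log\mathrm{tr}_{\tilde\omega}\tilde\omega_\epsilon - A\tilde\varphi_\epsilon + A\varepsilon\log|\sigma_{\tilde D}|^2_{h_{\tilde D}}$, with the barrier forcing the maximum off $\tilde D$ and the corrected reference form $\hat\omega_{\epsilon,t}-\varepsilon\ric(h_{\tilde D})\ge c_0\tilde\omega$ producing the good $-\mathrm{tr}_{\tilde\omega_\epsilon}\tilde\omega$ term. The only stylistic differences are that you absorb the $C/\mathrm{tr}_{\tilde\omega}\tilde\omega_\epsilon$ term into $C_1\,\mathrm{tr}_{\tilde\omega_\epsilon}\tilde\omega$ via $\mathrm{tr}_{\tilde\omega}\tilde\omega_\epsilon\cdot\mathrm{tr}_{\tilde\omega_\epsilon}\tilde\omega\ge n^2$ at the outset, and at the maximum you bound $\mathrm{tr}_{\tilde\omega}\tilde\omega_\epsilon$ by $C(n-\dot{\tilde\varphi}_\epsilon)^{n-1}e^{\dot{\tilde\varphi}_\epsilon}\le C$ using the exponential decay in $\dot{\tilde\varphi}_\epsilon$, whereas the paper instead carries $-A\log(\tilde\omega_\epsilon^n/\tilde\omega^n)$ and invokes the elementary bound $-x+An\log x\le C$; both routes are equivalent. (One trivial remark: the factor $\tfrac{1}{(n-1)!}$ in your trace inequality is not the standard constant, but since only an upper bound is needed this is harmless.)
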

\begin{proof}
By the classical $C^2$ estimate for Monge-Ampere equations, there is a constant $C_1$ depending on the lower bound of bisectional curvature of $\tilde \omega$, such that
\begin{equation}\label{eqn:1}
\Delta_{\tilde \omega_\epsilon}\log tr_{\tilde \omega}\tilde\omega_\epsilon\ge -  C_1 tr_{\tilde \omega_\epsilon}\tilde \omega - C_1 - \frac{tr_{\tilde \omega} \ric(\tilde\omega_\epsilon)}{tr_{\tilde \omega}\tilde \omega_\epsilon}.
\end{equation}
By the inequality \eqref{met}, we have on $\tilde X\backslash \tilde D$
\begin{equation}\label{eqn:2}
\frac{\partial}{\partial t}\log tr_{\tilde \omega}\tilde\omega_\epsilon = \frac{tr_{\tilde \omega}\frac{\partial}{\partial t} \tilde \omega_\epsilon}{tr_{\tilde \omega}\tilde\omega_\epsilon}\le \frac{-tr_{\tilde \omega}\ric(\tilde\omega_\epsilon)}{tr_{\tilde \omega}\tilde\omega_\epsilon} -1 + \frac{C}{tr_{\tilde\omega}\tilde \omega_\epsilon}.
\end{equation}
So we have on $\tilde X\backslash \tilde D$,
\begin{align*}
&(\frac{\partial}{\partial t} - \Delta_{\tilde \omega_\epsilon})\bk{ \log tr_{\tilde \omega}\tilde \omega_\epsilon - A\tilde\varphi_\epsilon + A\delta \log \abs{\sigma_{\tilde D}}_{h_{\tilde D}}}\\
\le & \frac{C}{tr_{\tilde \omega}\tilde \omega_\epsilon} + Ctr_{\tilde \omega_\epsilon}\tilde \omega - A \log \frac{\tilde\omega_\epsilon^n}{\tilde \Omega_\epsilon} + A\tilde \varphi_\epsilon +C \\
&\quad  - Atr_{\tilde \omega_\epsilon} (\tilde \chi - \delta\ric(h_{\tilde D} ) + e^{-t}(\pi^*\omega_0 - \tilde \chi) + \epsilon\tilde \omega)\\
\le & \frac{C}{tr_{\tilde \omega}\tilde \omega_\epsilon}  - 2 tr_{\tilde \omega_\epsilon}\tilde \omega -  A \log \frac{\tilde\omega_\epsilon^n}{\tilde \omega^n} - A\log \frac{\tilde \omega^n}{\tilde \Omega_\epsilon} + C\\
\le & \frac{C}{tr_{\tilde \omega}\tilde \omega_\epsilon}  - tr_{\tilde \omega_\epsilon}\tilde \omega + C,
\end{align*}
if we choose $A$ sufficiently large and $\delta$ suitably small, and in the last inequality we use the facts that $$\log \frac{\tilde \omega^n}{\tilde \Omega_\epsilon}\le C,$$
and 
\begin{equation*}
-tr_{\tilde\omega_\epsilon}\tilde \omega -A \log \frac{\tilde \omega_\epsilon^n}{\tilde \omega^n}\le -tr_{\tilde\omega_\epsilon}\tilde \omega + An\log tr_{\tilde\omega_\epsilon}\tilde \omega\le C,
\end{equation*}
since the function $x(\in\mathbb R^+)\mapsto - x + An\log x$ is bounded above.

Using the inequality \begin{equation*}
tr_{\tilde \omega}\tilde \omega_\epsilon\le (tr_{\tilde \omega_\epsilon}\tilde \omega)^{n-1}\frac{\tilde \omega^n}{\tilde \omega_\epsilon^n}\le (tr_{\tilde \omega_\epsilon}\tilde \omega)^{n-1} e^{\tilde \varphi_\epsilon + \dot{\tilde \varphi}_\epsilon} \frac{\tilde \Omega_\epsilon}{\tilde\omega^n}\le C(tr_{\tilde \omega_\epsilon}\tilde \omega)^{n-1}.
\end{equation*}
The maximum of $\log tr_{\tilde \omega}\tilde \omega_\epsilon - A\tilde\varphi_\epsilon + A\delta \log \abs{\sigma_{\tilde D}}_{h_{\tilde D}}$ cannot be at $\tilde D$, then maximum principle implies that at the maximum of $\log tr_{\tilde \omega}\tilde \omega_\epsilon - A\tilde\varphi_\epsilon + A\delta \log \abs{\sigma_{\tilde D}}_{h_{\tilde D}}$, we have
\begin{equation*}
0\le \frac{C}{tr_{\tilde \omega}\tilde\omega_\epsilon} - C(tr_{\tilde\omega}\tilde \omega_\epsilon)^{\frac{1}{n-1}} + C,
\end{equation*}
that is, at the maximum point, $$tr_{\tilde \omega}\tilde \omega_\epsilon\le C.$$ Thus we get the desired estimate.
\end{proof}
By standard Schauder estimate (\cite{PSS}) for parabolic equations, we have
\begin{lemma}\label{lemma high}
For any integer $\ell\in\mathbb Z_+$, on any compact subset $K\subset\subset \tilde X\backslash \tilde D$, there is a constant $C_{\ell, K}$ such that for any $t\ge 0$
\begin{equation*}
\|\tilde \varphi_\epsilon\|_{C^\ell(K)}\le C_{\ell,K}.
\end{equation*}
\end{lemma}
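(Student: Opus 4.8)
The plan is to run the standard parabolic regularity machinery for the complex Monge--Amp\`ere equation \eqref{KRF2}, localized on compact subsets of $\tilde X\setminus\tilde D$, once the a priori estimates of Lemmas \ref{lemma 4.1}, \ref{lemma 4.2} and \ref{lemma C2} have been put in the right form. First I would fix $K\subset\subset\tilde X\setminus\tilde D$ and choose $K\subset\subset K'\subset\subset\tilde X\setminus\tilde D$. On $K'$ the norms $|\sigma_{\tilde D}|_{h_{\tilde D}}$, $|\sigma_E|_{h_E}$, $|\sigma_F|_{h_F}$ are bounded away from $0$ and from $\infty$, so every weight occurring in the previous lemmas turns into a genuine uniform bound: Lemma \ref{lemma 4.1} together with Lemma \ref{lemma 4.2} gives $\|\tilde\varphi_\epsilon\|_{C^0(K')}\le C$ uniformly in $\epsilon\in(0,1)$ and $t\ge 0$; Lemma \ref{lemma C2} gives $\tilde\omega_\epsilon\le C\tilde\omega$ on $K'$; and since the reference form $\tilde\chi+e^{-t}(\pi^*\omega_0-\tilde\chi)+\epsilon\tilde\omega$ is a smooth family dominated by $C\tilde\omega$, the complex Hessian $\ddbar\tilde\varphi_\epsilon=\tilde\omega_\epsilon-\big(\tilde\chi+e^{-t}(\pi^*\omega_0-\tilde\chi)+\epsilon\tilde\omega\big)$ is bounded above and below on $K'$.

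The one missing ingredient is the matching lower bound $\tilde\omega_\epsilon\ge c\tilde\omega$ on $K$. From \eqref{KRF2} one has $\tilde\omega_\epsilon^n=e^{\dot{\tilde\varphi}_\epsilon+\tilde\varphi_\epsilon}\tilde\Omega_\epsilon$, and $\tilde\Omega_\epsilon$ is uniformly comparable to $\tilde\omega^n$ on $K'$; since the product of the $n-1$ largest eigenvalues of $\tilde\omega_\epsilon$ relative to $\tilde\omega$ is $\le C$ by the upper bound, the lower bound on $\tilde\omega_\epsilon$ on $K$ reduces to a uniform lower bound $\dot{\tilde\varphi}_\epsilon\ge -C$ on $K'$. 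To get this I would apply the parabolic minimum principle to $\dot{\tilde\varphi}_\epsilon+\tilde\varphi_\epsilon=\log(\tilde\omega_\epsilon^n/\tilde\Omega_\epsilon)$, which a direct computation from \eqref{KRF2} shows satisfies $\big(\tfrac{\partial}{\partial t}-\Delta_{\tilde\omega_\epsilon}\big)\big(\dot{\tilde\varphi}_\epsilon+\tilde\varphi_\epsilon\big)=-n+tr_{\tilde\omega_\epsilon}(\tilde\chi+\epsilon\tilde\omega)\ge -n$.

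This is where the main obstacle lies: the naive global minimum principle fails, because when $\dim X\ge 3$ the initial density $\dot{\tilde\varphi}_\epsilon(0)=\log\frac{(\pi^*\omega_0+\epsilon\tilde\omega)^n}{\tilde\Omega_\epsilon}$ degenerates (like $\log\epsilon$) along the exceptional locus of $\pi$, so propagating the $t=0$ data gives no uniform bound. One must localize. The plan is to add to $\dot{\tilde\varphi}_\epsilon+\tilde\varphi_\epsilon$ a barrier $A\log|\sigma_{\tilde D}|_{h_{\tilde D}}$, with $A>0$ chosen in terms of the $\delta$ of Lemma \ref{lemma 4.2} and the $\lambda$ of Lemma \ref{lemma C2}: its complex Hessian away from $\tilde D$ is the fixed form $A\,\ric(h_{\tilde D})$, and it tends to $-\infty$ along $\tilde D$ fast enough to dominate the controlled rate (coming from Lemmas \ref{lemma 4.2} and \ref{lemma C2}) at which $\log(\tilde\omega_\epsilon^n/\tilde\Omega_\epsilon)$ can blow down near $\tilde D$. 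This should pin the minimum of the modified quantity into a region $\{|\sigma_{\tilde D}|_{h_{\tilde D}}\ge\rho\}$, on which the $\{t=0\}$ slice is harmless because $\pi^*\omega_0$ is uniformly comparable to $\tilde\omega$ away from the exceptional locus, and on which an interior minimum is handled by the differential inequality above. Making the competition between the rates in this barrier argument uniform in $\epsilon$ is the delicate point.

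Finally, with $c\tilde\omega\le\tilde\omega_\epsilon\le C\tilde\omega$ on $K$ and the data of \eqref{KRF2} (in particular $\tilde\Omega_\epsilon$ and its logarithm) uniformly bounded in $C^\alpha$ on $K$, the equation is a uniformly parabolic complex Monge--Amp\`ere equation on $K\times[0,T]$ with uniformly bounded right-hand side; the parabolic Evans--Krylov theorem (equivalently, the third-order estimate) then gives a uniform $C^{2,\alpha}$ bound for $\tilde\varphi_\epsilon$ on a slightly smaller cylinder, and differentiating \eqref{KRF2} and iterating the parabolic Schauder estimates (as in \cite{PSS}) yields $\|\tilde\varphi_\epsilon\|_{C^\ell(K)}\le C_{\ell,K}$ for every $\ell$, uniformly in $\epsilon$ and $t$.
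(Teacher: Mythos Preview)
The paper's own ``proof'' is a single sentence deferring to the standard parabolic Schauder/Evans--Krylov machinery of \cite{PSS}; your proposal is therefore far more detailed than what the paper supplies, and its overall architecture---localize to $K'\subset\subset\tilde X\setminus\tilde D$ so that the weights in Lemmas~\ref{lemma 4.1}--\ref{lemma C2} become genuine constants, establish a two-sided bound $c\tilde\omega\le\tilde\omega_\epsilon\le C\tilde\omega$ on $K'$, then run Evans--Krylov and bootstrap---is exactly what that citation unpacks to. You are also right that the one ingredient not already on the table is the uniform lower bound on $\dot{\tilde\varphi}_\epsilon$ (equivalently on $\tilde\omega_\epsilon^n$) on $K'$, and that a global minimum principle is obstructed by the degeneration of the initial data along the exceptional locus.

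Two points in your barrier sketch need repair. First, the inequality $(\partial_t-\Delta_{\tilde\omega_\epsilon})(\dot{\tilde\varphi}_\epsilon+\tilde\varphi_\epsilon)\ge -n$ presumes $\tilde\chi+\epsilon\tilde\omega\ge 0$; nefness of $K_X$ does not force the particular representative $\chi$ to be semipositive, so this is unjustified. The fix is to fold the barrier into the test function from the start: with $H=\dot{\tilde\varphi}_\epsilon+\tilde\varphi_\epsilon-\varepsilon_0\log|\sigma_{\tilde D}|^2_{h_{\tilde D}}$ one gets $(\partial_t-\Delta_{\tilde\omega_\epsilon})H=-n+\mathrm{tr}_{\tilde\omega_\epsilon}\big(\tilde\chi-\varepsilon_0\ric(h_{\tilde D})+\epsilon\tilde\omega\big)\ge -n$, using $\tilde\chi-\varepsilon_0\ric(h_{\tilde D})>0$. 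Second, your barrier has the wrong sign: adding $A\log|\sigma_{\tilde D}|$ drives the quantity to $-\infty$ along $\tilde D$ and hence pulls the minimum \emph{toward} $\tilde D$, not away; and Lemmas~\ref{lemma 4.2}--\ref{lemma C2} give only an \emph{upper} bound on $\log(\tilde\omega_\epsilon^n/\tilde\Omega_\epsilon)$ near $\tilde D$, not the lower ``controlled blow-down rate'' you invoke. One must instead subtract the log term (as in $H$ above), use that for each fixed $\epsilon>0$ the flow is smooth so the spatial minimum of $H$ lies off $\tilde D$, and then check that both the interior differential inequality and the $t=0$ slice $\dot{\tilde\varphi}_\epsilon(0)-\varepsilon_0\log|\sigma_{\tilde D}|^2$ yield bounds independent of $\epsilon$; the latter uses $(\pi^*\omega_0)^n\sim|\sigma_E|^{2(n-1)}|\sigma_F|^2\tilde\Omega$ and needs the multiplicity of $\tilde D$ along $E,F$ to absorb the vanishing, which is the ``delicate point'' you flag. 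With these corrections your plan goes through and matches what the paper leaves implicit.
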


From Lemma \ref{lemma high}, we see that $\tilde\varphi_\epsilon(t)$ converge to a smooth function $\varphi_\infty$ on $\tilde X\backslash \tilde D$ as $t\to\infty$ and $\epsilon\to 0$, which satisfies the estimates
$$\delta\log\abs{\sigma_{\tilde D}}_{h_{\tilde D}} - C_\delta \le \varphi_\infty\le C, \text{ on }\tilde X\backslash \tilde D,\text{ for any }\delta\in(0,1)$$
$$\tilde\omega_\infty := \tilde \chi + \ddbar \varphi_\infty\le C |\sigma_{\tilde D}|_{h_{\tilde D}}^{-2\lambda}.$$
Moreover, from \eqref{eqn:con} we see that
\begin{equation*}
\frac{\partial}{\partial t} (\tilde \varphi_\epsilon + C e^{-t/2})\le 0.
\end{equation*}
Hence on any compact subset $K\subset \tilde X\backslash \tilde D$, the function $\tilde \varphi_\epsilon (t) + C e^{-t/2}$ decreases to a function $\varphi_{\infty,\epsilon}$ as $t\to \infty$. Hence $\dot{\tilde \varphi}_\epsilon(t)|_K$ approaches zero as $t\to \infty$ and $\epsilon\to 0$. Thus the metric $\tilde \omega_\infty$ satisfies the equation 
\begin{equation*}
\tilde \omega_\infty^n = (\tilde \chi + \ddbar \varphi_\infty)^n  = e^{\varphi_\infty} |\sigma_{E}|_{h_E}^{2(n-1)}|\sigma_F|_{h_F}^2 \tilde \Omega, \text{ on }\tilde X\backslash \tilde D.
\end{equation*}

Let $\epsilon\to 0$, $\tilde\varphi_\epsilon(t)$ tends to a function $\tilde\varphi_0(t)\in PSH(\tilde X, \tilde \chi + e^{-t}(\pi^*\omega_0 - \tilde\chi))\cap C^\infty (\tilde X\backslash \tilde D)$ in $C^\infty_{loc}(\tilde X\backslash \tilde D\times [0,\infty))$-topology, which satisfies the degenerate parabolic Monge-Ampere equation
\begin{equation}\label{eqn:MA}
\left\{\begin{aligned}
\frac{\partial\tilde \varphi_0}{\partial t} &= \log \frac{(\tilde \chi + e^{-t}(\pi^*\omega_0 - \tilde \chi) + \ddbar \tilde \varphi_0)^n}{|\sigma_E|_{h_E}^{2(n-1)} |\sigma_F|_{h_F}^2 \tilde \Omega} - \tilde \varphi_0,\\
\tilde \varphi_0(0)& = 0
\end{aligned}
\right.
\end{equation}
with the estimates
\begin{equation}\label{eqn:3}
\delta\log\abs{\sigma_{\tilde D}}_{h_{\tilde D}} - C_\delta \le \tilde\varphi_0\le C, \text{ on }\tilde X\backslash \tilde D, \forall \delta\in (0,1),
\end{equation}
\begin{equation}\label{eqn:4}
\tilde\omega_0(t) := \tilde \chi + e^{-t}(\pi^*\omega_0 - \tilde \chi) + \ddbar \tilde\varphi_0\le C |\sigma_{\tilde D}|_{h_{\tilde D}}^{-2\lambda}, \text{ on }\tilde X\backslash \tilde D.
\end{equation}
When the solutions $\tilde \varphi_0$ to \eqref{eqn:MA} are in $PSH(\tilde X, \tilde \chi + e^{-t}(\pi^*\omega_0 - \tilde \chi))\cap L^\infty(\tilde X)$, the uniqueness of such solutions has been proved in \cite{ST3}. In the following, we will adapt their method to prove the uniqueness when solutions satisfy \eqref{eqn:3}, instead of global $L^\infty$-bound.
\begin{prop}\label{prop:uniqueness}
Let $\varphi'\in PSH(\tilde X,\tilde \chi + e^{-t}(\pi^*\omega_0 - \tilde \chi))\cap C^\infty(\tilde X\backslash \tilde D\times [0,\infty))$ be a solution to the equation \eqref{eqn:MA} with the estimate \eqref{eqn:3}, then $$\varphi' = \tilde \varphi_0.$$
\end{prop}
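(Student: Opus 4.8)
The plan is to compare the two solutions by a maximum-principle argument, exactly as in \cite{ST3}, the one new ingredient being a barrier that confines the relevant extremum to the interior of $\tilde X\setminus\tilde D$.

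First I would reduce to a one-sided statement: since $\tilde\varphi_0$ itself lies in $PSH(\tilde X,\tilde\chi+e^{-t}(\pi^*\omega_0-\tilde\chi))\cap C^\infty(\tilde X\setminus\tilde D\times[0,\infty))$, solves \eqref{eqn:MA} with zero initial value, and obeys \eqref{eqn:3}, it suffices to prove that any two solutions $\varphi_1,\varphi_2$ with property \eqref{eqn:3} satisfy $\varphi_1\le\varphi_2$, and then to swap the roles of $\varphi_1$ and $\varphi_2$. Writing $\omega_i=\tilde\chi+e^{-t}(\pi^*\omega_0-\tilde\chi)+\ddbar\varphi_i$ and $\psi=\varphi_1-\varphi_2$, subtracting the two copies of \eqref{eqn:MA} gives
$$\frac{\partial\psi}{\partial t}=\log\frac{\omega_1^n}{\omega_2^n}-\psi,\qquad \omega_1=\omega_2+\ddbar\psi,$$
on $(\tilde X\setminus\tilde D)\times[0,\infty)$. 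By concavity of $\log\det$ one has $\log(\omega_1^n/\omega_2^n)\le\mathrm{tr}_{\omega_2}(\omega_1-\omega_2)=\Delta_{\omega_2}\psi$, hence $(\partial_t-\Delta_{\omega_2}+1)\psi\le 0$; at a space--time maximum this forces $\psi\le 0$, which together with $\psi(0)=0$ would finish the proof were $\tilde X\setminus\tilde D$ compact.

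To make the maximum principle usable I would next build a barrier. Recall $\theta:=\tilde\chi-\varepsilon\,\ric(h_{\tilde D})$ is K\"ahler; moreover, since $\pi=\pi_1\circ\pi_2$ is a composition of blow-ups along smooth centres, the class $\pi^*[\omega_0]$ minus small positive multiples of the $\pi$-exceptional prime divisors (whose union is $\mathrm{supp}\,\tilde D$) is K\"ahler, so one may fix a function $v_0$, smooth and bounded above on $\tilde X\setminus\tilde D$, with $v_0\to-\infty$ along $\tilde D$ at a definite logarithmic rate and $\omega_{\tilde X}:=\pi^*\omega_0+\ddbar v_0>0$. For small $\eta>0$ set
$$U_\eta:=-\varphi_2+(1-e^{-t})\,\varepsilon\log|\sigma_{\tilde D}|^2_{h_{\tilde D}}+e^{-t}v_0,$$
so that, on $\tilde X\setminus\tilde D$,
$$\omega_2(t)+\ddbar U_\eta=(1-e^{-t})\,\theta+e^{-t}\,\omega_{\tilde X}\ \ge\ 0.$$
The design is such that the complex Hessian of $U_\eta$ only adds a convex combination of two K\"ahler forms, while the logarithmic singularity of the barrier lives in the last two terms. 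Using \eqref{eqn:3} --- i.e. $\varphi_i\le C$ and $\varphi_i\ge\delta\log|\sigma_{\tilde D}|_{h_{\tilde D}}-C_\delta$ for every $\delta\in(0,1)$ --- I would choose the auxiliary exponents $\delta$ small relative to $\eta$ and to the fixed constants $\varepsilon$ and the exceptional-divisor coefficients, and then $\Psi_\eta:=\psi+\eta\,U_\eta$ is bounded above and tends to $-\infty$ along $\tilde D$, uniformly for $t$ in any fixed $[0,T]$. Hence $\Psi_\eta$ attains its maximum over $(\tilde X\setminus\tilde D)\times[0,T]$ at some $(p_0,t_0)$ with $p_0\notin\tilde D$, or on $\{t=0\}$.

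On $\{t=0\}$, $\Psi_\eta=\eta v_0\le0$ (normalise $v_0\le0$). If $t_0>0$ then at $(p_0,t_0)$ the condition $\ddbar\Psi_\eta\le0$ together with $\omega_2(t_0)+\ddbar U_\eta\ge0$ gives $\omega_1(t_0)=\omega_2(t_0)+\ddbar\psi\le(1+\eta)\omega_2(t_0)$, hence $\log(\omega_1^n/\omega_2^n)\le n\log(1+\eta)$; while $\partial_t\Psi_\eta\ge0$, combined with the two equations \eqref{eqn:MA}, yields $\psi(p_0,t_0)\le n\log(1+\eta)+\eta\,\partial_tU_\eta(p_0,t_0)$, and the error term is controlled by the a priori estimates (the upper bound $\varphi_i\le C$, the bound $\dot\varphi_i\le C$ obtained as in Lemma \ref{lemma 4.1}, and the lower bound for $\dot{\tilde\varphi}_0$ coming from Lemma \ref{lemma C2} together with $\tilde\omega_0^n=e^{\dot{\tilde\varphi}_0+\tilde\varphi_0}|\sigma_E|_{h_E}^{2(n-1)}|\sigma_F|_{h_F}^{2}\tilde\Omega$) to be $o_\eta(1)$ as $\eta\to0$. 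In either case $\max\Psi_\eta\le n\log(1+\eta)+o_\eta(1)$, so on any compact $K\subset\tilde X\setminus\tilde D$ one has $\psi=\Psi_\eta-\eta U_\eta\le n\log(1+\eta)+o_\eta(1)+\eta\sup_K(-U_\eta)$; letting $\eta\to0$ gives $\psi\le0$ on $K\times[0,T]$, and as $K,T$ are arbitrary, $\varphi_1\le\varphi_2$ on all of $(\tilde X\setminus\tilde D)\times[0,\infty)$. By symmetry $\varphi_1=\varphi_2$, so $\varphi'=\tilde\varphi_0$. I expect the construction and analysis of the barrier $U_\eta$ to be the main obstacle: one must simultaneously keep the complex Hessian from spoiling the sign at an interior maximum and make the barrier's singularity strictly dominate that of $\psi$ along $\tilde D$ --- this is precisely where \eqref{eqn:3} (the solutions grow strictly slower than $\log|\sigma_{\tilde D}|$) and the bigness and nefness of $\tilde\chi$ are used, and it is the step going beyond the globally bounded case of \cite{ST3}.
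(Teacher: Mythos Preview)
Your approach is genuinely different from the paper's, and as written it has a gap at the very step you flag as the hardest.

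The paper never directly compares two singular solutions. Instead it manufactures globally smooth approximants: one family $\varphi_\epsilon$ with the reference form boosted by $\epsilon\tilde\omega$, and a second family $\varphi^{(r)}$ with $\pi^*\omega_0$ weakened to $(1-r)\pi^*\omega_0$. Comparing the arbitrary $\varphi'$ with $\varphi_\epsilon$ (plus the barrier $\varepsilon_0\epsilon\log|\sigma_{\tilde D}|^2$) gives $\varphi_\epsilon\ge\varphi'$, hence $\varphi_0\ge\varphi'$ after $\epsilon\to 0$; comparing $\varphi'$ with $\varphi^{(r)}$ (plus the barrier $e^{-t}r\varepsilon_0\log|\sigma_{\tilde D}|^2$) gives $\varphi'\ge\varphi^{(r)}$, hence $\varphi'\ge\varphi_0$ after $r\to 0$. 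In both comparisons the extra positivity sitting inside the reference form makes the log-ratio nonnegative at the extremum with \emph{no} error term at all --- nothing about $\dot\varphi'$ is ever needed.

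In your argument, by contrast, the barrier $U_\eta$ is built from $-\varphi_2$, so $\partial_t U_\eta$ at the interior maximum contains $-\dot\varphi_2(p_0,t_0)$. You then claim this is controlled by ``the bound $\dot\varphi_i\le C$ obtained as in Lemma \ref{lemma 4.1}'' and ``the lower bound for $\dot{\tilde\varphi}_0$ from Lemma \ref{lemma C2}''. Neither is available for an \emph{arbitrary} solution $\varphi_2$: Lemma \ref{lemma 4.1} is a global maximum principle on a compact manifold (your $\varphi_2$ is only smooth off $\tilde D$), and Lemma \ref{lemma C2} is proved only for the specific approximants $\tilde\varphi_\epsilon$. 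Since your strategy is symmetric, at least one of the two applications has $\varphi_2=\varphi'$, for which you have no such bounds. Worse, the maximum point $(p_0,t_0)$ drifts with $\eta$ and may approach $\tilde D$ as $\eta\to 0$, so you cannot simply invoke local smoothness.

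That said, your scheme can be repaired without the missing a~priori estimates. At $(p_0,t_0)$ the Hessian inequality $\ddbar\Psi_\eta\le 0$, together with $\omega_1\ge 0$, gives not only $\omega_1\le(1+\eta)\omega_2$ but also $(1+\eta)\omega_2\ge\eta\big[(1-e^{-t_0})\theta+e^{-t_0}\omega_{\tilde X}\big]$, a uniform K\"ahler lower bound. Feeding this into $\dot\varphi_2+\varphi_2=\log\big(\omega_2^n/|\sigma_E|^{2(n-1)}_{h_E}|\sigma_F|^2_{h_F}\tilde\Omega\big)$ and regrouping, one finds $\Psi_\eta(p_0,t_0)\le n\log(1+\eta)-n\eta\log\eta+O(\eta)=o(1)$ as $\eta\to 0$, with the remaining $\log|\sigma|$-terms having the favourable sign. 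This closes the argument; but it is this lower bound on $\omega_2$ at the extremum --- not any a~priori control of $\dot\varphi_i$ --- that does the work.
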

\begin{proof}
We consider the following perturbed equation
\begin{equation*}
\left\{\begin{aligned}\frac{\partial}{\partial t}\varphi_{\epsilon, \gamma} & = \log \frac{(\tilde \chi + e^{-t}(\pi^*\omega_0 - \tilde \chi) + \epsilon\tilde \omega + \ddbar \varphi_{\epsilon, \gamma})^n}{(|\sigma_{E}|_{h_E}^{2(n-1) } + \gamma) (|\sigma_F|_{h_F}^2 + \gamma) \tilde \Omega} - \varphi_{\epsilon,\gamma}\\
\varphi_{\epsilon,\gamma}(0) & = 0,
\end{aligned}\right.
\end{equation*}
for any $\epsilon\in (0,1)$, $\gamma\in (0,1)$. By similar arguments as in Lemmas \ref{lemma 4.1}, \ref{lemma 4.2}, \ref{lemma C2}, we can get the following estimates for $\varphi_{\epsilon,\gamma}$, 
\begin{equation}\label{eqn:a}
\sup_{\tilde X} \varphi_{\epsilon,\gamma}\le C, \quad \sup_{\tilde X} \dot\varphi_{\epsilon,\gamma}\le C.
\end{equation}
For any $\delta \in (0,1)$, there is a constant $C_\delta$ such that 
\begin{equation}\label{eqn:b}
\varphi_{\epsilon,\gamma}\ge \delta \log \abs{\sigma_{\tilde D}}_{h_{\tilde D}} - C_\delta.
\end{equation}
\begin{equation}\label{eqn:c}
\omega_{\epsilon,\gamma}(t): = \tilde \chi + e^{-t}(\pi^*\omega_0 - \tilde \chi) + \epsilon\tilde \omega + \ddbar \varphi_{\epsilon,\gamma} \le C |\sigma_{\tilde D}|_{h_{\tilde D}}^{-2\lambda} \tilde \omega.
\end{equation}
\begin{equation}\label{eqn:d}
\|\varphi_{\epsilon,\gamma}\|_{C^\ell(K)}\le C_{\ell,K}, \text{ for any compact }K\subset \subset \tilde X\backslash \tilde D.
\end{equation}
Moreover, by maximum principle, we have the following monotonicity properties
\begin{equation*}
\varphi_{\epsilon,\gamma_1}\ge \varphi_{\epsilon,\gamma_2}, \text{ for any }\gamma_1\le \gamma_2, \forall \epsilon\in (0,1);
\end{equation*}
\begin{equation*}
\varphi_{\epsilon_1,\gamma}\le \varphi_{\epsilon_2,\gamma}, \text{ for any  }\epsilon_1\le \epsilon_2, \forall \gamma\in (0,1).
\end{equation*}

We can define a function \begin{equation*}
\varphi_\epsilon: = (\lim_{\gamma\to 0} \varphi_{\epsilon, \gamma})^*,
\end{equation*}
where $f^* (z) = \lim_{r\to 0}\sup_{w\in B(z,r)\backslash\{z\}} f(w)$ is the upper regularization of a function. Then $\varphi_\epsilon$ satisfies the equation
\begin{equation*}
\left\{\begin{aligned}\frac{\partial}{\partial t}\varphi_{\epsilon} & = \log \frac{(\tilde \chi + e^{-t}(\pi^*\omega_0 - \tilde \chi) + \epsilon\tilde \omega + \ddbar \varphi_{\epsilon})^n}{|\sigma_{E}|_{h_E}^{2(n-1) } |\sigma_F|_{h_F}^2\tilde \Omega} - \varphi_{\epsilon}, \text{ on }\tilde X\backslash \tilde D\\
\varphi_{\epsilon}(0) & = 0,
\end{aligned}\right.
\end{equation*}
And we have the monotonicity 
\begin{equation*}
\varphi_{\epsilon_1}\le \varphi_{\epsilon_2},\text{ for any }\epsilon_1\le \epsilon_2, \text{ on }\tilde X\backslash \tilde D.
\end{equation*}
So we can define $\varphi_0:=\lim_{\epsilon\to 0} \varphi_{\epsilon}$, which satisfies the equation
\begin{equation}\label{varphi0}
\left\{\begin{aligned}\frac{\partial}{\partial t}\varphi_{0} & = \log \frac{(\tilde \chi + e^{-t}(\pi^*\omega_0 - \tilde \chi)  + \ddbar \varphi_{0})^n}{|\sigma_{E}|_{h_E}^{2(n-1) } |\sigma_F|_{h_F}^2\tilde \Omega} - \varphi_{0}, \text{ on }\tilde X\backslash \tilde D\\
\varphi_{0}(0) & = 0,
\end{aligned}\right.
\end{equation}
The estimates \eqref{eqn:a}, \eqref{eqn:b}, \eqref{eqn:c} and \eqref{eqn:d} implies that $$\varphi_\epsilon \xrightarrow {C^\infty(K)} \varphi_0, \text{ as }\epsilon\to 0,$$ for any compact $K\subset\subset \tilde X\backslash \tilde D$,
and $\varphi_0$ satisfies similar estimates as in \eqref{eqn:a}, \eqref{eqn:b}, \eqref{eqn:c} and \eqref{eqn:d}.

For any $\varphi'$ as in Proposition \ref{prop:uniqueness}, define a function $$\psi: = \varphi_\epsilon - \varphi' - \varepsilon_0\epsilon \log \abs{\sigma_{\tilde D}}_{h_{\tilde D}}$$
 on $\tilde X\backslash \tilde D$, where $\varepsilon_0$ is a small number such that $\tilde \omega - \varepsilon_0\ric(h_{\tilde D})>0$. For any $\epsilon$, $$\psi\ge \delta\log \abs{\sigma_{\tilde D}}_{h_{\tilde D}} - C_\delta - C - \varepsilon_0\epsilon \log \abs{\sigma_{\tilde D}}_{h_{\tilde D}}\to +\infty,$$
as the point approaching $\tilde D$, if $\delta$ is small enough, say, $\delta\le \varepsilon_0\epsilon/2$. Hence the minimum of $\psi(\cdot, t)$ can only be at $\tilde X\backslash \tilde D$. And on $\tilde X\backslash \tilde D$, $\psi$ satisfies the equation
\begin{equation*}
\frac{\partial\psi}{\partial t} = \log \frac{(\tilde \chi + e^{-t}(\pi^*\omega_0 - \tilde \chi ) + \ddbar \varphi'+ \epsilon (\tilde \omega - \omega_0\ric(h_{\tilde D})) +\ddbar \psi )^n}{(\tilde \chi + e^{-t}(\pi^*\omega_0 - \tilde \chi ) + \ddbar \varphi')^n} - \psi  - \varepsilon_0\epsilon\log\abs{\sigma_{\tilde D}}_{h_{\tilde D}}.
\end{equation*}
Maximum principle argument implies that $\psi_{\min}=\inf_{\tilde X\backslash \tilde D} \psi(\cdot,t)\ge 0$. (Recall we assume $\abs{\sigma_{\tilde D}}_{h_{\tilde D}}\le 1$.) Hence
\begin{equation*}
\varphi_\epsilon\ge \varphi'+ \varepsilon_0\epsilon\log \abs{\sigma_{\tilde D}}_{h_{\tilde D}}, \text{ on }\tilde X\backslash \tilde D.
\end{equation*}
On any compact $K\subset\subset \tilde X\backslash \tilde D$, letting $\epsilon\to 0$, we get
\begin{equation*}
\varphi_0\ge \varphi',\text{ on }K,
\end{equation*}
then let $K\to \tilde X\backslash \tilde D$, we see that \begin{equation}\label{one bound}\varphi_0\ge \varphi'.\end{equation} To show the uniqueness, we only need to show $\varphi_0\le \varphi'$, and this will be done by another perturbed equation, as Song-Tian do in \cite{ST3}.

\begin{equation*}
\left\{\begin{aligned}\frac{\partial}{\partial t}\varphi^{(r)}_{\epsilon, \gamma} & = \log \frac{(\tilde \chi + e^{-t}((1-r)\pi^*\omega_0 - \tilde \chi) + \epsilon\tilde \omega + \ddbar \varphi_{\epsilon, \gamma})^n}{(|\sigma_{E}|_{h_E}^{2(n-1) } + \gamma) (|\sigma_F|_{h_F}^2 + \gamma) \tilde \Omega} - \varphi^{(r)}_{\epsilon,\gamma}\\
\varphi^{(r)}_{\epsilon,\gamma}(0) & = 0,
\end{aligned}\right.
\end{equation*}
It's not hard to see that $\varphi^{(r)}_{\epsilon,\gamma}\to \varphi_{\epsilon,\gamma}$ as $r\to 0$. Denote $\hat\omega = \tilde \chi + e^{-t}((1-r)\pi^*\omega_0 - \tilde \chi) + \epsilon\tilde \omega + \ddbar \varphi_{\epsilon, \gamma}$.
\begin{lemma}\label{lemma:1} For some constant $C>0$, we have
\begin{equation*}
\sup_{\tilde X} \varphi^{(r)}_{\epsilon,\gamma}\le C,\quad C\log \abs{\sigma_{\tilde D}}_{h_{\tilde D}} - C\le \frac{\partial}{\partial r} \varphi^{(r)} _{\epsilon,\gamma}\le 0
\end{equation*}
\end{lemma}
\begin{proof}
The upper bound of $\varphi^{(r)}_{\epsilon,\gamma}$ follows similarly as the proof in Lemma \ref{lemma 4.1}.
\begin{equation*}
\frac{\partial}{\partial t}\bk{ \frac{\partial \varphi^{(r)}_{\epsilon,\gamma}}{\partial r} } = \Delta_{\hat\omega}\frac{\partial \varphi^{(r)}_{\epsilon,\gamma}}{\partial r} - e^{-t}tr_{\hat \omega}\pi^*\omega_0 - \frac{\partial \varphi^{(r)}_{\epsilon,\gamma}}{\partial r}\le \Delta _{\hat \omega} \frac{\partial \varphi^{(r)}_{\epsilon,\gamma}}{\partial r} - \frac{\partial \varphi^{(r)}_{\epsilon,\gamma}}{\partial r}.
\end{equation*}
Maximum principle argument implies $\frac{\partial \varphi^{(r)}_{\epsilon,\gamma}}{\partial r}\le 0$.

Let $H:=\frac{\partial \varphi^{(r)}_{\epsilon,\gamma}}{\partial r} + A\pe - A\varepsilon_0\log \abs{\sigma_{\tilde D}}_{h_{\tilde D}}$, where $\varepsilon_0>0$ is a small number such that $$\tilde \chi + e^{-t}((1-r)\pi^*\omega_0 - \tilde \chi) + \epsilon\tilde \omega - \varepsilon_0\ric(h_{\tilde D})\ge c_0\tilde \omega, $$ for all $t\ge 0$ and $c_0>0$ is a uniform constant.
 
On $\tilde X\backslash \tilde D$, if we choose $A$ sufficiently large, we have
\begin{align*}
(\frac{\partial}{\partial t} - \Delta_{\hat\omega}) H =& - e^{-t} tr_{\hat \omega} \pi^*\omega_0 - \frac{\partial\pe}{\partial r} + A \log\frac{\hat \omega^n }{\tilde \Omega_\gamma} - A\pe - An\\
& + Atr_{\hat\omega} (\tilde \chi + e^{-t}((1-r)\pi^*\omega_0 - \tilde \chi) + \epsilon\tilde \omega - \varepsilon_0\ric(h_{\tilde D}))\\
\ge & - H - A\varepsilon_0\log \abs{\sigma_{\tilde D}}_{h_{\tilde D}} - C\\
\ge & - H - C.
\end{align*}
Since the minimum of $H$ cannot occur at $\tilde D$, maximum principle argument implies that $H\ge -C$, combing with the uniform upper bound of $\pe$, we conclude that $$\frac{\partial}{\partial r}\pe\ge C\log\abs{\sigma_{\tilde D}}_{h_{\tilde D}} - C.$$

\end{proof}

Let \begin{equation*}
\varphi^{(r)}_\epsilon: = (\lim_{\gamma\to 0}\pe)^*,
\end{equation*}
then it satisfies the equation
\begin{equation*}
\left\{
\begin{aligned}
\frac{\partial}{\partial t}\varphi^{(r)}_\epsilon & = \log \frac{ (\tilde \chi + e^{-t}((1-r)\pi^*\omega_0 -\tilde \chi) +\epsilon\tilde\omega + \ddbar \varphi^{(r)}_\epsilon}{|\sigma_E|_{h_E}^{2(n-1)} |\sigma_F|_{h_F}^2 \tilde \Omega} - \varphi^{(r)}_\epsilon, \text{ on }\tilde X\backslash \tilde D\\
\varphi^{(r)}_\epsilon(0) &= 0.
\end{aligned}
\right.
\end{equation*}
We have the monotonicity $\varphi^{(r)}_{\epsilon_1}\le\varphi^{(r)}_{\epsilon_2}$ for any $\epsilon_1\le \epsilon_2$. Define $$\varphi^{(r)} = \lim_{\epsilon\to 0}\varphi^{(r)}_\epsilon.$$ From Lemma \ref{lemma:1} it's not hard to see that
\begin{equation*}
|\varphi^{(r_1)} - \varphi^{(r_2)}|\le C(1-\log\abs{\sigma_{\tilde D}}_{h_{\tilde D}}) |r_1 - r_2|, \text{ on }\tilde X\backslash \tilde D,
\end{equation*}
hence on any compact subset $K\subset\subset \tilde X\backslash \tilde D$, $\varphi^{(r)}\to \varphi_0$ in the $C^\infty$ sense as $r\to 0$, where $\varphi_0$ is the solution constructed in \eqref{varphi0}. 

\newcommand{\sg}{\abs{\sigma_{\tilde D}}_{h_{\tilde D}}}

Now we are ready to finish the proof of Proposition \ref{prop:uniqueness}. Define $G := \varphi' - \varphi^{(r)} - e^{-t}r\varepsilon_0 \log \abs{\sigma_{\tilde D}}_{h_{\tilde D}}$. By the assumption on $\varphi'$,  for any fixed $t\ge 0$, $r\in (0,1)$
\begin{equation*}
G\ge \delta\log\sg - C_\delta - C - e^{-t}r\varepsilon_0\log \sg\to +\infty,
\end{equation*}
as approaching $\tilde D$, if $\delta$ is smaller than $e^{-t} r\varepsilon_0$, hence the minimum of $G$ cannot be at $\tilde D$. On the other hand, on $\tilde X\backslash \tilde D$, we have
\begin{align*}
\frac{\partial}{\partial t} G & = \log \frac{ (\tilde \chi + e^{-t}((1-r)\pi^*\omega_0 - \tilde \chi) + \ddbar \varphi^{(r)} + r e^{-t} (\pi^*\omega_0 - \varepsilon_0\ric(h_{\tilde D})) + \ddbar G)^n }{(\tilde \chi + e^{-t}((1-r)\pi^*\omega_0 - \tilde \chi) + \ddbar \varphi^{(r)})^n}  - G\\
&\ge \log \frac{ (\tilde \chi + e^{-t}((1-r)\pi^*\omega_0 - \tilde \chi) + \ddbar \varphi^{(r)} + \ddbar G)^n }{(\tilde \chi + e^{-t}((1-r)\pi^*\omega_0 - \tilde \chi) + \ddbar \varphi^{(r)})^n}  - G,
\end{align*}
by maximum principle, we have $G\ge 0$, i.e., \begin{equation*}
\varphi'\ge \varphi^{(r)} + e^{-t}r\varepsilon_0\log \sg.
\end{equation*}
On any compact subset $K\subset\subset \tilde X\backslash \tilde D$, letting $r\to 0$, we get
$$\varphi'\ge \varphi_0,\quad\text{ on }K.$$
Then let $K\to \tilde X\backslash \tilde D$, we see that $\varphi'\ge \varphi_0$ on $\tilde X\backslash \tilde D$,  combing with \eqref{one bound}, we show that $\varphi' = \varphi_0$. Hence we finish the proof of uniqueness of solutions.

\end{proof}

From the uniqueness of solutions to \eqref{eqn:MA} and estimates of $\pi^*\varphi$, we see that 
\begin{equation}\label{equnique}
\tilde \omega_\epsilon(t)\xrightarrow{C^\infty_{loc}(\tilde X\backslash \tilde D)} \pi^*\omega(t), \text{ as }\epsilon\to 0,
\end{equation}
where $\omega(t)$ is the solution to the K\"ahler Ricci flow \eqref{KRF} on $X$.

%\subsection{estimates near singularities}
We will come back to equation \eqref{KRF2}. 

Let $O\in B_O\subset Z$ be a small Euclidean ball, $\tilde B_O = \pi_2^{-1}(B_O)\subset \tilde X$. The divisors $\tilde D'$ and $\pi^{-1}(D) -  E$ (the proper transform of $D$) lie in the zero set of of a local holomorphic function $w$ in $\tilde B_O$. By Lemma \ref{lemma C2}, we have
\begin{lemma}\label{lem:bound}
\begin{equation*}
\tilde \omega_\epsilon(t)\le \frac{C}{|w|^{2\lambda}}\tilde \omega,\quad\text{ on }\partial \tilde B_O.
\end{equation*}
\end{lemma}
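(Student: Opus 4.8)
The plan is to deduce Lemma~\ref{lem:bound} directly from the interior estimate of Lemma~\ref{lemma C2}, by comparing the norm of the defining section $\sigma_{\tilde D}$ with the local function $w$ on the compact set $\partial\tilde B_O$.

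First I would observe that $\partial\tilde B_O=\pi_2^{-1}(\partial B_O)$ is disjoint from the exceptional divisor $E$ of the blow-up $\pi_2$: since $E=\pi_2^{-1}(O)$ and $O\notin\partial B_O$, the map $\pi_2$ is a biholomorphism near $\partial\tilde B_O$. As $\mathrm{supp}\,\tilde D''=E$, the section $\sigma_{\tilde D''}$ is nowhere vanishing on a neighbourhood of the compact set $\partial\tilde B_O$, so $c_1\le|\sigma_{\tilde D''}|_{h_{\tilde D''}}\le c_2$ there for positive constants. Writing $\sigma_{\tilde D}=\sigma_{\tilde D'}\otimes\sigma_{\tilde D''}$ and, up to a bounded smooth factor, $h_{\tilde D}=h_{\tilde D'}\otimes h_{\tilde D''}$, this yields
$$c\,|\sigma_{\tilde D'}|_{h_{\tilde D'}}\ \le\ |\sigma_{\tilde D}|_{h_{\tilde D}}\ \le\ C\,|\sigma_{\tilde D'}|_{h_{\tilde D'}}\qquad\text{on }\partial\tilde B_O.$$

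Next I would use that $\mathrm{supp}\,\tilde D'\subseteq\{w=0\}$ inside $\tilde B_O$. Hence, as effective divisors on $\tilde B_O$, one has $\tilde D'\le M\cdot\mathrm{div}(w)$ for an integer $M$ large enough (concretely $M=\max_j\lceil b_j/c_j\rceil$, where $b_j$ is the multiplicity of the $j$-th component of $\tilde D'$ and $c_j\ge 1$ the vanishing order of $w$ along it). Therefore $w^{M}\sigma_{\tilde D'}^{-1}$ extends to a holomorphic section of $\mathcal O(M\cdot\mathrm{div}(w)-\tilde D')$; since this bundle carries a continuous metric for which the pointwise norm of that section is $|w|^{M}/|\sigma_{\tilde D'}|_{h_{\tilde D'}}$, and $\partial\tilde B_O$ is compact, we obtain
$$|w|^{M}\ \le\ C\,|\sigma_{\tilde D'}|_{h_{\tilde D'}}\qquad\text{on }\partial\tilde B_O,$$
which is nothing but the \L ojasiewicz inequality for the inclusion of analytic sets $\mathrm{supp}\,\tilde D'\subseteq\{w=0\}$ on this compact set. (When $\tilde D'$ does not meet $\tilde B_O$ at all, the same display holds trivially, $|\sigma_{\tilde D'}|_{h_{\tilde D'}}$ being bounded below and $|w|$ bounded above on $\partial\tilde B_O$.)

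Finally, combining Lemma~\ref{lemma C2} with the two displays above gives, on $\partial\tilde B_O$,
$$\tilde\omega_\epsilon(t)\ \le\ C\,|\sigma_{\tilde D}|_{h_{\tilde D}}^{-2\lambda}\,\tilde\omega\ \le\ C\,|\sigma_{\tilde D'}|_{h_{\tilde D'}}^{-2\lambda}\,\tilde\omega\ \le\ C\,|w|^{-2M\lambda}\,\tilde\omega,$$
and relabelling the exponent $M\lambda$ as $\lambda$ yields the claim, with constants uniform in $t\ge 0$ and $\epsilon\in(0,1)$ because those in Lemma~\ref{lemma C2} are. The only substantive step is the second one: one must pass from the scalar ``distance to $\{w=0\}$'' measured by $|w|$ to the norm of the section $\sigma_{\tilde D'}$, and since $\{w=0\}$ is in general strictly larger than $\mathrm{supp}\,\tilde D'$ (it also contains the proper transform of $D$), the comparison can only hold after raising $|w|$ to a suitable power $M$. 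This is however a routine compact \L ojasiewicz/Nullstellensatz estimate, so I do not anticipate any real obstacle here.
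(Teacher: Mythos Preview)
Your argument is correct and is precisely the approach the paper has in mind: the paper simply writes ``By Lemma~\ref{lemma C2}'' before stating Lemma~\ref{lem:bound}, leaving implicit exactly the comparison you spell out, namely that on the compact set $\partial\tilde B_O$ (which misses $E=\mathrm{supp}\,\tilde D''$) one has $|\sigma_{\tilde D}|_{h_{\tilde D}}\sim|\sigma_{\tilde D'}|_{h_{\tilde D'}}\gtrsim|w|^{M}$, whence the estimate of Lemma~\ref{lemma C2} becomes the stated bound after renaming $\lambda$. Your explicit identification of the power $M$ via the multiplicities and the \L ojasiewicz/Nullstellensatz justification are the right way to make the paper's one-line reduction rigorous.
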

Let $\hat \omega : =\pi_2^*\omega_{Eucl}$, where $\omega_{Eucl}$ is the Euclidean metric on $B_O$, then local calculation shows that (see \cite{SW,S1})
\begin{equation}\label{eqn:aa}
C_0^{-1}\hat\omega\le \tilde \omega\le \frac{C_0}{\abs{\sigma_E}_{h_E}} \hat\omega, \quad\text{in }\tilde B_O,
\end{equation}
and
\begin{equation*}
\tilde \chi - \varepsilon_0\ric(h_E)>0,\quad \text{ in }\tilde B_O.
\end{equation*}

\begin{prop}
There exist a small $\delta\in (0,1)$ and $\lambda>0$ such that for any $t\ge 0,\epsilon>0$, we have
\begin{equation}\label{equation 100}
\tilde \omega_\epsilon(t)\le \frac{C}{|\sigma_E|_{h_E}^{2(1-\delta)}|w|^{2\lambda}} \tilde \omega, \quad\text{in }\tilde B_O. 
\end{equation}
\end{prop}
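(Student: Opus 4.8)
Since $\tilde\omega_\epsilon(t)\le(\mathrm{tr}_{\hat\omega}\tilde\omega_\epsilon(t))\,\hat\omega$ pointwise and $\hat\omega\le C_0\tilde\omega$ in $\tilde B_O$ by \eqref{eqn:aa}, the estimate \eqref{equation 100} will follow once we bound $\mathrm{tr}_{\hat\omega}\tilde\omega_\epsilon(t)\le C|\sigma_E|_{h_E}^{-2(1-\delta)}|w|^{-2\lambda}$ on $\tilde B_O$, uniformly in $t\ge 0$ and $\epsilon\in(0,1)$. The gain over Lemma \ref{lemma C2} comes from replacing the fixed K\"ahler metric $\tilde\omega$ by $\hat\omega=\pi_2^*\omega_{Eucl}$, which is \emph{flat} on $\tilde B_O\backslash E$; consequently the parabolic Chern--Lu (Aubin--Yau) inequality for $\log\mathrm{tr}_{\hat\omega}\tilde\omega_\epsilon$ along \eqref{KRF2} carries no background curvature term there. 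Using $\tfrac{\partial}{\partial t}\tilde\omega_\epsilon+\ric(\tilde\omega_\epsilon)=-\tilde\omega_\epsilon+\tilde\chi+\epsilon\tilde\omega-\ddbar\log\tilde\Omega_\epsilon$, the $\epsilon$-uniform bound $\ddbar\log\tilde\Omega_\epsilon\ge -C\tilde\omega$ already established in the text (equivalently $-\ddbar\log\tilde\Omega_\epsilon\le C\tilde\omega$), the inequalities $\tilde\chi,\epsilon\tilde\omega\le C\tilde\omega$, and $\mathrm{tr}_{\hat\omega}\tilde\omega\le C|\sigma_E|_{h_E}^{-1}$ from \eqref{eqn:aa}, one gets on $\tilde B_O\backslash E$
\[
\Big(\tfrac{\partial}{\partial t}-\Delta_{\tilde\omega_\epsilon}\Big)\log\mathrm{tr}_{\hat\omega}\tilde\omega_\epsilon\ \le\ -1+\frac{C}{|\sigma_E|_{h_E}\,\mathrm{tr}_{\hat\omega}\tilde\omega_\epsilon}.
\]

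The plan is then to run the maximum principle on $\tilde B_O\times[0,T]$ (for arbitrary $T$, with all constants independent of $T$ and $\epsilon$) for the quantity
\[
Q\ :=\ \log\mathrm{tr}_{\hat\omega}\tilde\omega_\epsilon\ +\ (1-\delta)\log|\sigma_E|_{h_E}^2\ +\ \lambda\log|w|^2\ -\ A\,\tilde\varphi_\epsilon\ +\ A\delta_1\log|\sigma_{\tilde D}|_{h_{\tilde D}},
\]
with $\delta\in(0,\tfrac12)$ small, $\lambda$ the exponent of Lemma \ref{lemma C2}, $\delta_1>0$ small, $A>0$ large, to be fixed at the end. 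Here $\log|w|^2$ is pluriharmonic on $\tilde B_O\backslash\{w=0\}$, so it contributes nothing to $(\tfrac{\partial}{\partial t}-\Delta_{\tilde\omega_\epsilon})Q$; the term $(1-\delta)\log|\sigma_E|_{h_E}^2$ contributes $(1-\delta)\,\mathrm{tr}_{\tilde\omega_\epsilon}\ric(h_E)$; and, exactly as in the proof of Lemma \ref{lemma C2} (via $\dot{\tilde\varphi}_\epsilon=\log(\tilde\omega_\epsilon^n/\tilde\Omega_\epsilon)-\tilde\varphi_\epsilon$), the combination $-A\tilde\varphi_\epsilon+A\delta_1\log|\sigma_{\tilde D}|_{h_{\tilde D}}$ contributes $-A\log(\tilde\omega_\epsilon^n/\tilde\Omega_\epsilon)+A\tilde\varphi_\epsilon+An-A\,\mathrm{tr}_{\tilde\omega_\epsilon}\bigl(\tilde\chi-\tfrac{\delta_1}{2}\ric(h_{\tilde D})+e^{-t}(\pi^*\omega_0-\tilde\chi)+\epsilon\tilde\omega\bigr)$. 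Choosing $\delta_1$ small and $A$ large so that $\tilde\chi-\tfrac{\delta_1}{2}\ric(h_{\tilde D})+e^{-t}(\pi^*\omega_0-\tilde\chi)+\epsilon\tilde\omega\ge \tfrac{C}{A}\tilde\omega$ for all $t\ge 0$, as in that proof, the $\mathrm{tr}_{\tilde\omega_\epsilon}$-terms, together with $-A\log(\tilde\omega_\epsilon^n/\tilde\omega^n)\le An\log\mathrm{tr}_{\tilde\omega_\epsilon}\tilde\omega+C$ and $(1-\delta)\mathrm{tr}_{\tilde\omega_\epsilon}\ric(h_E)\le C\mathrm{tr}_{\tilde\omega_\epsilon}\tilde\omega$, are absorbed using the boundedness above of $x\mapsto -cx+An\log x$, while $A\tilde\varphi_\epsilon\le C$ and $-A\log(\tilde\omega^n/\tilde\Omega_\epsilon)\le C$ are bounded above $\epsilon$-uniformly.

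One then checks that the correction terms force $Q\to-\infty$ on approach to $E$ or to $\{w=0\}$ inside $\tilde B_O$: the a priori bound $\tilde\omega_\epsilon\le C|\sigma_{\tilde D}|_{h_{\tilde D}}^{-2\lambda}\tilde\omega$ of Lemma \ref{lemma C2} with \eqref{eqn:aa} makes $\log\mathrm{tr}_{\hat\omega}\tilde\omega_\epsilon$ blow up only logarithmically, while $-A\tilde\varphi_\epsilon$ is controlled from above by $-A\delta_2\log|\sigma_{\tilde D}|_{h_{\tilde D}}+C_{\delta_2}$ for arbitrary $\delta_2\in(0,1)$ (Lemma \ref{lemma 1}(ii)); taking $A\delta_1$ large relative to the multiplicities of $E$ and of the components of $\{w=0\}$ in $\tilde D$ makes the coefficients of $-\log|\sigma_E|_{h_E}$ and of $-\log|w|$ in $Q$ strictly positive. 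Hence the maximum of $Q$ on $\overline{\tilde B_O}\times[0,T]$ is attained at $t=0$, on $\partial\tilde B_O\times[0,T]$, or at an interior point of $(\tilde B_O\backslash(E\cup\{w=0\}))\times(0,T]$. At $t=0$ one has $\tilde\varphi_\epsilon=0$ and $\tilde\omega_\epsilon=\pi^*\omega_0+\epsilon\tilde\omega\le C\tilde\omega$, whence $Q\le\log(C|\sigma_E|_{h_E}^{-1})+2(1-\delta)\log|\sigma_E|_{h_E}+2\lambda\log|w|\le C$; on $\partial\tilde B_O$, Lemma \ref{lem:bound} gives $\mathrm{tr}_{\hat\omega}\tilde\omega_\epsilon\le C|w|^{-2\lambda}|\sigma_E|_{h_E}^{-1}$, so the $\log|w|$ contributions cancel, $(1-2\delta)\log|\sigma_E|_{h_E}\le 0$, and $-A\tilde\varphi_\epsilon+A\delta_1\log|\sigma_{\tilde D}|_{h_{\tilde D}}$ is bounded above, again giving $Q\le C$. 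Finally, at an interior maximum $(p_0,t_0)$ with $t_0>0$ one has $(\tfrac{\partial}{\partial t}-\Delta_{\tilde\omega_\epsilon})Q\ge 0$; feeding in the displayed inequality together with the absorbed trace terms leaves $0\le \frac{C}{|\sigma_E|_{h_E}\,\mathrm{tr}_{\hat\omega}\tilde\omega_\epsilon}-c\,\mathrm{tr}_{\tilde\omega_\epsilon}\hat\omega+C$ there, and combining this with $\mathrm{tr}_{\hat\omega}\tilde\omega_\epsilon\le(\mathrm{tr}_{\tilde\omega_\epsilon}\hat\omega)^{n-1}\,\tilde\omega_\epsilon^n/\hat\omega^n$, the local asymptotics $\hat\omega^n\asymp|\sigma_E|_{h_E}^{2(n-1)}\tilde\omega^n$ of \cite{SW,S1}, the elementary bound $\tilde\Omega_\epsilon\le C\tilde\omega^n$, and Lemma \ref{lemma 4.1}, one derives at $(p_0,t_0)$ an upper bound for $\mathrm{tr}_{\hat\omega}\tilde\omega_\epsilon$ yielding $Q(p_0,t_0)\le C$. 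Thus $Q\le C$ on $\tilde B_O\times[0,T]$, and letting $T\to\infty$ completes the proof.

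\textbf{Main difficulty.} The one genuinely new ingredient is the flatness of $\hat\omega$ away from $E$, which removes the curvature term in Chern--Lu and is what allows the exponent of $|\sigma_E|_{h_E}$ to be brought down to the near-optimal value $2(1-\delta)$ rather than an uncontrolled power inherited from Lemma \ref{lemma C2}. The technical work lies in the joint bookkeeping of $\delta,\lambda,\delta_1,A$: the correction terms must be large enough to push the maximum of $Q$ off $E\cup\{w=0\}$, the interior differential inequality must still close with $\epsilon$-independent constants, and the contribution of the $\log|\sigma_{\tilde D}|_{h_{\tilde D}}$ correction to the effective exponent of $|\sigma_E|_{h_E}$ in the final bound must stay below $2\delta$. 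Keeping the mixed quantities $\mathrm{tr}_{\hat\omega}(\,\cdot\,)$, $\mathrm{tr}_{\tilde\omega_\epsilon}\hat\omega$ and the volume ratio $\tilde\omega_\epsilon^n/\hat\omega^n$ under control near the degeneracy locus $E$ of $\hat\omega$ --- via \eqref{eqn:aa}, the equation \eqref{KRF2}, and the volume-form asymptotics, distinguishing if necessary the regimes $|\sigma_E|_{h_E}^{2(n-1)}\gtrless\epsilon$ in which the right-hand side of \eqref{KRF2} is or is not essentially nondegenerate --- is where the analysis requires the most care.
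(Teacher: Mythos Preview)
There is a genuine gap in your barrier argument near $E$, and it cannot be repaired within your set-up.

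First a minor point: from \eqref{eqn:aa} one has $\tilde\omega\le C_0|\sigma_E|_{h_E}^{-2}\hat\omega$, so $\mathrm{tr}_{\hat\omega}\tilde\omega\le C|\sigma_E|_{h_E}^{-2}$; your exponent $|\sigma_E|_{h_E}^{-1}$ in the displayed parabolic inequality should be $|\sigma_E|_{h_E}^{-2}$.

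The real problem is the following. For each fixed $\epsilon>0$ the metric $\tilde\omega_\epsilon(t)$ is a \emph{smooth} K\"ahler metric on $\tilde X$, while $\hat\omega=\pi_2^*\omega_{Eucl}$ has an $(n-1)$-dimensional kernel along $E$; a local computation (or \eqref{eqn:aa}) gives $\mathrm{tr}_{\hat\omega}\tilde\omega_\epsilon\ge c(\epsilon,t)\,|\sigma_E|_{h_E}^{-2}$ near $E$. Hence
\[
\log\mathrm{tr}_{\hat\omega}\tilde\omega_\epsilon+(1-\delta)\log|\sigma_E|_{h_E}^2\ \ge\ -2\delta\log|\sigma_E|_{h_E}+C'\ \longrightarrow\ +\infty
\]
as one approaches $E$. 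Since $-A\tilde\varphi_\epsilon$ is bounded for fixed $\epsilon,T$, the only term in $Q$ that can pull it back to $-\infty$ is $A\delta_1\log|\sigma_{\tilde D}|_{h_{\tilde D}}^2\sim 2A\delta_1 m_E\log|\sigma_E|_{h_E}$, where $m_E$ is the multiplicity of $E$ in $\tilde D$. This forces $A\delta_1 m_E>\delta$. But then, after the maximum principle yields $Q\le C$, unpacking gives
\[
\mathrm{tr}_{\hat\omega}\tilde\omega_\epsilon\ \le\ C\,|\sigma_E|_{h_E}^{-2(1-\delta)}\,|w|^{-2\lambda}\,|\sigma_{\tilde D}|_{h_{\tilde D}}^{-2A\delta_1}\ \le\ C\,|\sigma_E|_{h_E}^{-2(1-\delta)-2A\delta_1 m_E}\,|w|^{-2\lambda'},
\]
and the exponent $2(1-\delta)+2A\delta_1 m_E>2$. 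So the barrier condition and the desired sub-$2$ exponent on $|\sigma_E|_{h_E}$ are mutually exclusive: putting $(1-\delta)$ directly into $Q$ cannot work. (Your remark that ``the contribution of the $\log|\sigma_{\tilde D}|$ correction \dots\ must stay below $2\delta$'' is exactly the inequality $A\delta_1 m_E<\delta$, which contradicts what the barrier needs.)

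The paper resolves this tension differently. It takes the weight $|\sigma_E|_{h_E}^{2(1+r)}$ with $r>0$, so that $|\sigma_E|_{h_E}^{2(1+r)}\mathrm{tr}_{\hat\omega}\tilde\omega_\epsilon\sim|\sigma_E|_{h_E}^{2r}\to 0$ and the barrier is automatic, and then \emph{adds a second trace term} $\tfrac{1}{2C_1}\log\bigl(|w|^{2\lambda+2}\mathrm{tr}_{\tilde\omega}\tilde\omega_\epsilon\bigr)$ to the test function. The parabolic inequality for this second term produces $+\tfrac12\mathrm{tr}_{\tilde\omega_\epsilon}\tilde\omega$, which is absorbed by the $-\mathrm{tr}_{\tilde\omega_\epsilon}\tilde\omega$ coming from the first part. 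At the interior maximum one then obtains, using $\mathrm{tr}_{\hat\omega}\tilde\omega_\epsilon\ge C_0^{-1}\mathrm{tr}_{\tilde\omega}\tilde\omega_\epsilon$, a bound of the form
\[
\bigl(\mathrm{tr}_{\tilde\omega}\tilde\omega_\epsilon\bigr)^{\,1+(2C_1)^{-1}}\ \le\ C\,|\sigma_E|_{h_E}^{-2(1+r)}\,|w|^{-2\Lambda},
\]
so the effective exponent on $|\sigma_E|_{h_E}^{-2}$ is $\frac{1+r}{1+(2C_1)^{-1}}=1-\delta<1$ once $r<(2C_1)^{-1}$. The second trace is what converts the ``too large'' weight $(1+r)$ needed for the barrier into the ``small enough'' exponent $(1-\delta)$ in the conclusion; your single-trace quantity has no mechanism to do this.
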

\begin{proof}
We will do the calculation in $\tilde B_O\backslash E\cup\{w=0\}$. Since $\hat\omega$ has flat curvature in $\tilde B_O\backslash E\cup\{w=0\}$, we have
\begin{equation*}
\Delta_{\tilde \omega_\epsilon(t)}\log tr_{\hat\omega}\tilde\omega_\epsilon(t)\ge - \frac{tr_{\hat\omega} \ric(\tilde \omega_\epsilon(t))}{tr_{\hat\omega} \tilde \omega_\epsilon(t)},
\end{equation*}
and
\newcommand{\omt}{\tilde \omega_\epsilon(t)}
by \eqref{met}
\begin{equation*}
\frac{\partial}{\partial t}\log tr_{\hat \omega}\tilde\omega_\epsilon(t) \le \frac{ tr_{\hat\omega}(-\ric(\tilde \omega_\epsilon(t) ) - \tilde \omega_\epsilon(t) + C\tilde \omega) }{tr_{\hat\omega}\tilde\omega_\epsilon(t)}.
\end{equation*}
So
\begin{equation*}
(\frac{\partial}{\partial t} - \Delta_{\omt})\log tr_{\hat\omega}\omt\le -1 + C \frac{tr_{\hat\omega} \tilde \omega}{tr_{\hat\omega}\omt}\le \frac{C}{|\sigma_E|_{h_E}^2 tr_{\hat \omega} \omt},
\end{equation*}
where we have used \eqref{eqn:aa}.

So we have ($r$ is a sufficiently small number)
\begin{equation*}
(\frac{\partial}{\partial t} - \Delta_{\omt})\log ( |\sigma_E|_{h_E}^{2(1+r)} |w|^{2\lambda} tr_{\hat \omega}\omt )\le \frac{C}{|\sigma_E|_{h_E}^2 tr_{\hat \omega}\omt} + (1+r)tr_{\omt} \ric(h_E).
\end{equation*}

\begin{align*}
&(\frac{\partial}{\partial t} - \Delta_{\omt})\bk{\log |\sigma_E|_{h_E}^{2(1+r)} |w|^{2\lambda} tr_{\hat \omega}\omt - A\tilde \varphi_\epsilon}\\
\le & C - A\log \frac{\omt^n}{\tilde \omega^n} + \frac{C}{|\sigma_E|^2_{h_E} tr_{\hat\omega}\omt} + (1+r)\ric(h_E)\\
& \quad - A tr_{\omt}\tilde \chi - A e^{-t}tr_{\omt} (\pi^*\omega_0 - \tilde \chi) - A\epsilon tr_{\omt} \tilde \omega\\
&\le C - tr_{\omt}\tilde \omega +\frac{C}{|\sigma_E|_{h_E}^2 tr_{\hat\omega}\omt},
\end{align*}
if $A$ is sufficiently large, and in the last inequality we have used the fact that $\tilde\chi - \varepsilon \ric(h_E)$ is a K\"ahler metric on $\tilde B_O$ when $\varepsilon$ is small.

On the other hand, similar calculation shows that
\begin{equation*}
(\frac{\partial}{\partial t} - \Delta_{\omt}) \log tr_{\tilde \omega}\omt\le C_1 tr_{\omt}\tilde \omega + C_1  + \frac{C_1}{tr_{\tilde \omega}\omt},
\end{equation*}
where $C_1$ depends on the lower bound of the bisectional curvature of $\tilde \omega$. 

Define $$G = {\log |\sigma_E|_{h_E}^{2(1+r)} |w|^{2\lambda} tr_{\hat \omega}\omt - A\tilde \varphi_\epsilon} +\frac{1}{2C_1} \log |w|^{2\lambda+2} tr_{\tilde \omega}\omt,$$
by the calculations above, we have
\begin{align*}
(\frac{\partial}{\partial t} - \Delta_{\omt}) G \le & C - \frac{1}{2} tr_{\omt}\tilde \omega + \frac{C}{|\sigma_E|_{h_E}^2 tr_{\hat \omega} \omt} + \frac{1}{tr_{\tilde \omega}\omt} \\
\le & C_2 - \frac{1}{2} tr_{\omt}\tilde \omega + \frac{C}{|\sigma_E|_{h_E}^2 tr_{\hat \omega} \omt},
\end{align*}
where in the last inequality we use \eqref{eqn:aa}.

For any small positive $r$, $|\sigma_E|^{2(1+r)}_{h_E}|w|^{2\lambda}tr_{\hat\omega}\omt$ tends to $0$ as approaching $E$ and $\{w=0\}$, so for any $t\ge 0$, $G$ cannot obtain its maximum at $\tilde B_O\cap E\cap \{w=0\}$. Moreover, we know
\begin{equation*}
\tilde\varphi_\epsilon\ge \delta\log |w| - C_\delta, \text{ on }\partial \tilde B_O,
\end{equation*}
for any small $\delta>0$. Hence by Lemma \ref{lem:bound}, we have
\begin{equation*}
\sup_{\partial \tilde B_O} G\le C.
\end{equation*}
For any $T>0$, assume $(p_0,t_0)\in \overline{\tilde B_O}\backslash E\cup \{w = 0\}\times [0,T]$ is the maximum point of $G$. If $p_0\in \partial \tilde B_O$, then we are done. Otherwise, we have at this maximum point
\begin{equation*}
|\sigma_E|_{h_E}^{2} tr_{\hat\omega}\omt \big(tr_{\omt}\tilde \omega - 2C_2\big)\le C.
\end{equation*}
By the inequality
\begin{equation*}
tr_{\tilde \omega}\omt \le \frac{\omt^n}{\tilde\omega^n}(tr_{\omt}\tilde \omega)^{n-1} = (tr_{\omt}\tilde \omega)^{n-1} e^{\tilde\varphi_\epsilon+\dot{\tilde\varphi}_\epsilon} \frac{\tilde\Omega_\epsilon}{\tilde \omega^n}\le C_3 (tr_{\omt}\tilde \omega)^{n-1}.
\end{equation*}
So
\begin{equation*}
|\sigma_E|_{h_E}^{2} tr_{\hat\omega}\omt\big( (tr_{\tilde\omega}{\omt})^{1/(n-1)} - C_4 \big)\le C,\quad\text{at }(p_0,t_0).
\end{equation*}
If  $tr_{\tilde \omega}{\omt}(p_0,t_0)\le 2^{n-1} C_4^{n-1}$, then $|\sigma_E|_{h_E}^2 tr_{\hat \omega} \omt (p_0,t_0) \le 2^{n-1} C C_4^{n-1}$. Noting that in $\tilde B_O$, $$\tilde \varphi_\epsilon\ge \delta \log |\sigma_E|_{h_E}^2 + \delta \log |w|^2 - C_\delta,$$
hence $G$ is bounded above by a uniform constant, if we choose $\delta$ small enough in the above inequality.

If $tr_{\tilde \omega}{\omt}(p_0,t_0)\ge 2^{n-1} C_4^{n-1}$, then we have
\begin{equation*}
|\sigma_E|_{h_E}^{2} tr_{\hat\omega}\omt(p_0,t_0)\le C.
\end{equation*}
Then for $\delta$ small enough,
\begin{equation*}
G(p_0,t_0)\le r\log \abs{\sigma_E}_{h_E} - \delta \log \abs{\sigma_E}_{h_E} -  \delta \log |w|^2 + (\lambda+1)\log |w|^2 + C \le C.
\end{equation*}
In sum, in all cases, we have $\sup_{\tilde B_O\times [0,T]} G\le C$. Then
\begin{equation*}
\log \bk{ |\sigma_E|_{h_E}^{2(1+r)} tr_{\hat\omega} \omt  |w|^{2\lambda+(2+2\lambda)(2C_1)^{-1}}  (tr_{\tilde \omega}\omt)^{(2C_1)^{-1}}  }\le \tilde\varphi_\epsilon +C\le C,
\end{equation*}
noting that $tr_{\hat\omega}\omt\ge C_0^{-1} tr_{\tilde \omega}\omt$, we have
\begin{equation*}
 \bk{tr_{\tilde \omega} \omt}^{1+(2C_1)^{-1}}\le \frac{C}{|\sigma_E|_{h_E}^{2(1+r)} |w|^{2\Lambda}}.
\end{equation*}
If we choose $r$ sufficiently small, say, $r\le (10 C_1)^{-1}$, then $\frac{1+r}{1+ (2C_1)^{-1}} = 1-\delta$ for some $\delta\in (0,1)$,
and hence 
\begin{equation*}
tr_{\tilde \omega}\omt\le \frac{C}{|\sigma_E|_{h_E}^{2(1-\delta)} |w|^{2\Lambda}},\quad\text{in }\tilde B_O\backslash E\cup \{w= 0\}.
\end{equation*}
\end{proof}

\begin{corr}
By letting $\epsilon\to 0$ in \eqref{equation 100} and the convergence \eqref{equnique}, we have for any $t\ge 0$
\begin{equation}\label{eqn:bb}
\pi^*\omega(t)\le \frac{C}{|\sigma_E|_{h_E}^{2(1-\delta)} |w|^{2\lambda}}\tilde \omega, \text{ in }\tilde B_O\backslash (E\cup \{w=0\}).
\end{equation}
Letting $t\to\infty$, we have
\begin{equation*}
\pi^*\omega_\infty\le \frac{C}{|\sigma_E|_{h_E}^{2(1-\delta)} |w|^{2\lambda}}\tilde \omega, \text{ in }\tilde B_O\backslash (E\cup \{w=0\}).
\end{equation*}
\end{corr}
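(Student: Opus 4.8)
The plan is to obtain \eqref{eqn:bb} by passing to the limits $\epsilon\to 0$ and then $t\to\infty$ in the uniform bound \eqref{equation 100} of the preceding Proposition, so that no new PDE estimate is needed; the only point requiring care is that the relevant convergences be valid on $\tilde B_O\setminus(E\cup\{w=0\})$. To this end I would first note that within $\tilde B_O$ both $\mathrm{supp}\,\tilde D$ and $\pi^{-1}(D)$ are contained in $E\cup\{w=0\}$: indeed $\mathrm{supp}\,\tilde D=E\cup\mathrm{supp}\,\tilde D'$, the sets $\mathrm{supp}\,\tilde D'$ and the proper transform of $D$ both lie in $\{w=0\}$ by the choice of $w$, and $\pi^{-1}(D)$ is covered by $\mathrm{supp}\,\tilde D$ together with the proper transform of $D$. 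Hence $\tilde B_O\setminus(E\cup\{w=0\})\subset(\tilde X\setminus\tilde D)\cap(\tilde X\setminus\pi^{-1}(D))$, so on this set both the locally smooth convergence \eqref{equnique}, namely $\tilde\omega_\epsilon(t)\to\pi^*\omega(t)$ in $C^\infty_{loc}(\tilde X\setminus\tilde D)$ as $\epsilon\to0$, and the locally smooth convergence $\pi^*\omega(t)\to\pi^*\omega_\infty$ as $t\to\infty$ (obtained by pulling back $\omega(t)\to\omega_\infty$ in $C^\infty_{loc}(X\setminus D)$, established from Lemma \ref{lemma 1} and the discussion following it) are available.

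Next, fixing $t\ge0$ and a point $p\in\tilde B_O\setminus(E\cup\{w=0\})$ --- at which $|\sigma_E|_{h_E}(p)>0$ and $|w|(p)>0$, so the right-hand side of \eqref{equation 100} is finite --- I would evaluate \eqref{equation 100} at $p$ and let $\epsilon\to0$. Since the constants $C,\delta,\lambda$ there are independent of $\epsilon$ and of $t$, and $\tilde\omega_\epsilon(t)(p)\to\pi^*\omega(t)(p)$ while a non-strict inequality between Hermitian $(1,1)$-forms is preserved under pointwise limits, the bound survives: $\pi^*\omega(t)\le C|\sigma_E|_{h_E}^{-2(1-\delta)}|w|^{-2\lambda}\tilde\omega$ throughout $\tilde B_O\setminus(E\cup\{w=0\})$, which is \eqref{eqn:bb}.

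Finally, since the right-hand side of \eqref{eqn:bb} does not depend on $t$, I would let $t\to\infty$ in \eqref{eqn:bb} and invoke $\pi^*\omega(t)\to\pi^*\omega_\infty$ pointwise on $\tilde B_O\setminus(E\cup\{w=0\})$ to conclude $\pi^*\omega_\infty\le C|\sigma_E|_{h_E}^{-2(1-\delta)}|w|^{-2\lambda}\tilde\omega$ there. The only genuine ``obstacle'' is the geometric bookkeeping of the first paragraph --- recognizing $\tilde B_O\setminus(E\cup\{w=0\})$ as a region where both families converge smoothly --- together with the observation that the $\epsilon$- and $t$-uniformity of the constants in \eqref{equation 100} is precisely what legitimizes the iterated passage to the limit; beyond that the argument is a routine stability of inequalities under limits.
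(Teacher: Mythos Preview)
Your proposal is correct and follows exactly the route the paper indicates in the very statement of the corollary: pass to the limit $\epsilon\to 0$ using \eqref{equnique}, then $t\to\infty$ using the $C^\infty_{loc}(X\setminus D)$ convergence of $\omega(t)$ to $\omega_\infty$. Your additional paragraph verifying that $\tilde B_O\setminus(E\cup\{w=0\})$ lies inside both $\tilde X\setminus\tilde D$ and $\tilde X\setminus\pi^{-1}(D)$ is a welcome clarification that the paper leaves implicit.
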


\begin{lemma}\label{finite distance}
For any $q\in D\subset X$, there exists a smooth curve $\gamma(s):\in [0,1]\to X$ such that
\begin{enumerate}[label=(\arabic*)]
\item $\gamma([0,1))\subset  X\backslash D$, and $\gamma(1) = q$;
\item $\gamma$ is transversal to $D$;
\item for any $\varepsilon>0$, there exists an $s_0>0$, such that for all $s\in [s_0, 1)$
\begin{equation*}
d_{g(t)}(q,\gamma(s))\le \varepsilon, \quad \forall t\ge 0.
\end{equation*} 
\end{enumerate}
\end{lemma}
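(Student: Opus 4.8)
\emph{Strategy.} The plan is to lift the problem to the point blow-up $\tilde X$ and exploit the estimate \eqref{eqn:bb}; its essential feature is the exponent $2(1-\delta)$ with $\delta>0$ \emph{strictly less than} $2$, and this is precisely what forces a curve hitting $E$ transversally to have finite — in fact arbitrarily small — length, uniformly in $t$. Given $q\in D$, I would run the construction of Section 4 with the centre $O$ of the blow-up chosen over $q$: a log resolution $\pi_1:Z\to X$ (which we may take to be an isomorphism over the simple normal crossing locus of $(X,D)$) together with a point $O\in\pi_1^{-1}(q)$ lying on a single smooth component of the reduced total transform $\pi_1^{-1}(D)_{\mathrm{red}}$ — available over any $q$ after at most one further blow-up — and then blow up $O$, so that the exceptional divisor $E$ of $\pi_2$ satisfies $\pi(E)=\{q\}$. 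All estimates of Sections 3--4, and in particular \eqref{eqn:bb}, hold for this construction with constants independent of $t\ge 0$.

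\emph{Construction of the curve.} Work in the blow-up chart $\tilde B_O$ with coordinates $(v_1,\dots,v_n)$ in which $E=\{v_n=0\}$, $\pi_2(v)=(v_1v_n,\dots,v_{n-1}v_n,v_n)$, and $z_n=v_n$ is a local equation of the component of $\pi_1^{-1}(D)_{\mathrm{red}}$ through $O$. Because $O$ is a smooth point of $\pi_1^{-1}(D)_{\mathrm{red}}$, near a point $p\in E\cap\tilde B_O$ one has $\pi^{-1}(D)=E$ as sets; hence the strict transform $\pi^{-1}(D)-E$ is empty there, and since $\mathrm{supp}\,\tilde D'\subset\mathrm{supp}\,F$ and there is no component of $F$ near $p$, $\tilde D'$ is empty there as well, so $w$ is a nonvanishing holomorphic function on a neighbourhood $U\ni p$ with $|w|\ge c_0>0$. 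For small generic constants $a_1,\dots,a_{n-1}$ set $p=(a_1,\dots,a_{n-1},0)\in E$ and let $\tilde\gamma(s)\in U$ be the arc with $v_i\circ\tilde\gamma\equiv a_i$ and $v_n\circ\tilde\gamma(s)$ a small multiple of $1-s$; it is transverse to $E$, meets $E$ only at $\tilde\gamma(1)=p$, and $|\tilde\gamma'|_{\tilde\omega}\le C$. Put $\gamma:=\pi\circ\tilde\gamma:[0,1]\to X$. Then $\gamma$ is smooth on $[0,1]$, $\gamma(1)=q$, $\gamma([0,1))\subset X\setminus D$ (since $\tilde\gamma([0,1))$ avoids $\pi^{-1}(D)=E$ near $p$), and, because $z_n\circ\pi_2=v_n$, $\gamma$ meets $D$ only at $q$ for $s$ near $1$ and, for generic $(a_i)$, transversally there; this gives (1) and (2).

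\emph{The uniform length bound (3).} Along $\tilde\gamma$ one has $|\sigma_E|_{h_E}(\tilde\gamma(s))\ge c(1-s)$ (transversality) and $|w|\ge c_0$, so \eqref{eqn:bb} yields, with $C'$ independent of $t\ge0$,
\begin{equation*}
|\tilde\gamma'(s)|^2_{\pi^*\omega(t)}\ \le\ \frac{C}{|\sigma_E|_{h_E}^{2(1-\delta)}(\tilde\gamma(s))\,|w|^{2\lambda}(\tilde\gamma(s))}\,|\tilde\gamma'(s)|^2_{\tilde\omega}\ \le\ \frac{C'}{(1-s)^{2(1-\delta)}}.
\end{equation*}
Therefore, for $s$ close to $1$ and every $t\ge0$,
\begin{equation*}
d_{g(t)}\big(q,\gamma(s)\big)\ \le\ L_{g(t)}\big(\gamma|_{[s,1]}\big)\ =\ \int_s^1 |\tilde\gamma'(\tau)|_{\pi^*\omega(t)}\,d\tau\ \le\ \int_s^1\frac{\sqrt{C'}}{(1-\tau)^{1-\delta}}\,d\tau\ =\ \frac{\sqrt{C'}}{\delta}\,(1-s)^{\delta},
\end{equation*}
where $\delta>0$ is used. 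Given $\varepsilon>0$, pick $s_0<1$ with $\tfrac{\sqrt{C'}}{\delta}(1-s_0)^{\delta}<\varepsilon$; then $d_{g(t)}(q,\gamma(s))\le\varepsilon$ for all $s\in[s_0,1)$ and all $t\ge0$, which is (3).

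\emph{Main obstacle.} The analytic core, the $t$-uniform length bound, is almost immediate once \eqref{eqn:bb} is in hand — it works \emph{only} because $2(1-\delta)<2$, so that $\int(1-\tau)^{-(1-\delta)}\,d\tau$ converges; the weaker bound $\pi^*\omega(t)\le C|\sigma_{\tilde D}|_{h_{\tilde D}}^{-2\lambda}\tilde\omega$ of Lemma \ref{lemma C2} would not suffice. The genuinely delicate point is the birational bookkeeping near $p\in E$: checking that a suitable point of $E$ lies off both $\tilde D'$ and the strict transform of $D$ (so that $w$ is a unit there), and that $\gamma=\pi\circ\tilde\gamma$ is truly transversal to $D$ at $q$ even when $q$ is a singular point of $D$ — both handled by the choice of $O$ as a smooth point of $\pi_1^{-1}(D)_{\mathrm{red}}$ over $q$ together with, if needed, one extra blow-up.
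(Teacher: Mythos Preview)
Your proposal is correct and follows essentially the same approach as the paper: lift to the blow-up $\tilde X$ centred over $q$, pick a path $\tilde\gamma$ in $\tilde B_O$ hitting $E$ transversally while staying away from $\{w=0\}$, push down via $\pi$, and use the estimate \eqref{eqn:bb} to bound the $g(t)$-length uniformly in $t$. The paper's proof is extremely terse (four sentences), and you have supplied exactly the details it suppresses---the explicit integral $\int_s^1 (1-\tau)^{-(1-\delta)}\,d\tau = \delta^{-1}(1-s)^\delta$ and the reason the exponent $2(1-\delta)<2$ is essential, as well as the birational check that a generic point of $E$ avoids $\{w=0\}$.
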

\begin{proof}
We take the resolution $\pi_1: Z\to X$ and choose a point $O$ in a smooth component of $\pi_1^{-1}(D)$ with $\pi_1(O)=q$, and blow up $O$, $\pi_2: \tilde X\to Z$, and $\pi= \pi_1\circ\pi_2: \tilde X\to X$. We choose an appropriate smooth path $\tilde \gamma([0,1))\subset \tilde B_O\backslash E\cup \{w = 0\}$ which keeps away from $\{w=0\}$ and  $\tilde \gamma(1)\subset E$. Then  $\gamma = \pi(\tilde \gamma)$ is the desired path, and last item follows from the uniform estimate \eqref{eqn:bb}. 
\end{proof}

\begin{corr}\label{corollary finite}
For a fixed $p\in X\backslash D$, any $q\in D$, there exists a constant $C=C_q$ such that for any $t\ge 0$
\begin{equation*}
d_{g(t)}(p,q)\le C_q.
\end{equation*}
Hence along the convergent sequence $(X,g(t_i),p)\xrightarrow{d_{GH}}(X_\infty, d_\infty, p_\infty)$, $q\in (X,g(t_i))$ converges (up to a subsequence) to some $q_\infty\in X_\infty$ in the Gromov-Hausdorff sense. 
\end{corr}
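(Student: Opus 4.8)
The plan is to bound $d_{g(t)}(p,q)$ uniformly in $t$ by concatenating a path that stays in a fixed compact subset of $X\setminus D$ with the short ``tail'' curve supplied by Lemma~\ref{finite distance}. Fix $q\in D$ and let $\gamma:[0,1]\to X$ be the curve from Lemma~\ref{finite distance}. Item (3) of that lemma, applied with $\varepsilon=1$, produces an $s_0=s_0(q)\in(0,1)$, \emph{independent of $t$}, such that $d_{g(t)}(q,\gamma(s_0))\le 1$ for all $t\ge 0$. Since $d_{g(t)}(p,q)\le d_{g(t)}(p,\gamma(s_0))+d_{g(t)}(\gamma(s_0),q)$, it suffices to bound $d_{g(t)}(p,\gamma(s_0))$ uniformly in $t$, and now $p$ and $\gamma(s_0)$ both lie in $X\setminus D$.

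Because $D$ has real codimension two, $X\setminus D$ is connected, so we may fix once and for all a smooth path $\eta:[0,1]\to X\setminus D$ from $p$ to $\gamma(s_0)$; its image lies in some compact set $K\subset\subset X\setminus D$. On $K$ the norm $|\sigma_D|_{h_D}$ is bounded below away from $0$, so Lemma~\ref{lemma 1}(iii) gives a constant $C_K$, independent of $t$, with $\mathrm{tr}_{\omega_0}\omega(t)\le C_K$ on $K$; since all eigenvalues of $\omega(t)$ relative to $\omega_0$ are then $\le C_K$, this means $\omega(t)\le C_K\,\omega_0$ on $K$. Hence
\[
d_{g(t)}(p,\gamma(s_0))\ \le\ \int_0^1 |\dot\eta(u)|_{g(t)}\,du\ \le\ \sqrt{C_K}\int_0^1 |\dot\eta(u)|_{g_0}\,du\ =:\ C',
\]
which is independent of $t$, and combining with the previous step we obtain $d_{g(t)}(p,q)\le C'+1=:C_q$ for all $t\ge 0$.

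For the final assertion, recall that along $t_i\to\infty$ the $(X,g(t_i))$ are almost K\"ahler--Einstein with a uniform lower volume bound near $p$ and a uniform Ricci lower bound, so by Bishop--Gromov the metric balls of a given radius are uniformly totally bounded and the pointed limit $(X_\infty,d_\infty,p_\infty)$ of \eqref{eqn:GH} is a proper length space; in particular its closed balls are compact. Choose $\varepsilon_i$-Gromov--Hausdorff approximations realizing \eqref{eqn:GH}. For $i$ large the bound $d_{g(t_i)}(p,q)\le C_q$ places the image of $q$ inside the compact set $\overline{B_{d_\infty}(p_\infty,C_q+1)}$; passing to a subsequence, these images converge to some $q_\infty\in X_\infty$, which is exactly the asserted Gromov--Hausdorff convergence $q\to q_\infty$.

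I do not expect any serious obstacle inside Corollary~\ref{corollary finite} itself: the genuine input is Lemma~\ref{finite distance} (behind which sits the uniform estimate \eqref{eqn:bb} near $D$), which we may assume. The only points that require care are bookkeeping: the curve $\gamma$, the truncation parameter $s_0$, and hence the constant $C_q$, all depend on $q$; and the connecting path $\eta$ must be chosen inside a \emph{fixed} compact subset of $X\setminus D$ so that Lemma~\ref{lemma 1}(iii) provides a $t$-independent comparison constant.
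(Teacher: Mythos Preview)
Your proof is correct and is precisely the argument the paper intends: the corollary is stated without proof immediately after Lemma~\ref{finite distance}, and your two-step decomposition (the short tail from Lemma~\ref{finite distance} plus a fixed path in a compact $K\subset\subset X\setminus D$ on which Lemma~\ref{lemma 1}(iii) gives $\omega(t)\le C_K\omega_0$) is exactly the intended derivation. The Gromov--Hausdorff part is also handled correctly via the uniform distance bound and compactness of closed balls in the limit.
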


Since we aim to give a purely analytic proof of our main results, without using of Kawamata's base point free theorem, we need the local freeness of some power of the canonical line bundle $K_X$ as proved in \cite{S2}, for completeness we give a sketched proof.
\begin{prop}\cite{S2}
For any $q\in D$, there exists $\sigma\in H^0(X, mK_{X})$ for some $m\in\mathbb Z_+$ such that
\begin{equation*}
\sigma(q)\neq 0.
\end{equation*}
\end{prop}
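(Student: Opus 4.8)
The plan is to produce $\sigma$ by the same H\"ormander $L^2$-construction used in the proof that $\mathcal S_X\subset\mathcal S$ above, specialised to the fixed point $q$ and to $0$-jets. Because $q$ is now fixed, the Perelman pseudo-locality and the local holomorphic coordinate constructions needed there are superfluous: it is enough to work with the K\"ahler metric $\omega(t)$ at a single large finite time $t$, which is a smooth K\"ahler metric on all of $X$, together with ambient holomorphic coordinates near $q$.

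First I would choose holomorphic coordinates $z=(z_1,\dots,z_n)$ on a small coordinate ball $B$ centred at $q$, a holomorphic frame $e$ of $K_X$ on $B$, and a cut-off $\eta$ equal to $1$ on $\tfrac12 B$ and supported in $B$; put $s_0=\eta\,e^{\otimes m}$, a smooth global section of $mK_X$, and $\tau=\bar\partial s_0=\bar\partial\eta\cdot e^{\otimes m}$, a smooth $mK_X$-valued $(0,1)$-form supported in the annulus $B\setminus\tfrac12 B$ and vanishing near $q$. Then, for a small $\epsilon>0$ and $t$ large, I would equip $mK_X$ with the singular Hermitian metric
\[
h\;=\;h_\chi^{\,m}\,e^{-m\varphi(t)/2}\,|\sigma_D|_{h_D}^{-2\epsilon}\,e^{-(n+1)\eta\log\abs{z}}\,e^{-\varphi(t)-\dot\varphi(t)},
\]
where $h_\chi=1/\Omega$ and $h_D$ is the metric on $[D]$ from Kodaira's lemma, chosen (after $m$ and $\epsilon$ are fixed) so that $\tfrac m2\chi-\epsilon\,\ric(h_D)$ is a fixed K\"ahler form. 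This $h$ is precisely the weight $h_k^m e^{-\Phi_k}$ of the cited proof, taken with $\mu=0$, at time $t$, and in the reference coordinates at $q$; hence, by the identical computation carried out there --- using $\ric(\omega(t))=-\chi-\ddbar(\varphi+\dot\varphi)$, the Monge--Amp\`ere equation for $\varphi$, the harmlessness of $\ddbar(\eta\log\abs z)$ (nonnegative near $q$, bounded on the annulus, and absorbed into $\tfrac m2\omega(t)$ once $m$ is large), and that $\tfrac m2\chi-\epsilon\ric(h_D)$ dominates $\tfrac m2 e^{-t}(\omega_0-\chi)$ for $t$ large --- one obtains, for $m$ large,
\[
\ric(h)+\ric(\omega(t))\;\ge\;\tfrac m4\,\omega(t)
\]
in the current sense on $X$.

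Now Theorem \ref{thm:L2} applies with $L=mK_X$, the metric $h$, the K\"ahler metric $\omega(t)$ and $\delta=\tfrac m4$: since $\tau$ is supported away from $q$ and, $\epsilon$ being small, the pole of $h$ along $D$ is locally integrable there, $\int_X\abs{\tau}_{h,\omega(t)}\,\omega(t)^n<\infty$, so there is a smooth section $u$ of $mK_X$ on $X$ with $\bar\partial u=\tau$ and $\int_X\abs{u}_{h,\omega(t)}\,\omega(t)^n<\infty$. Near $q$ the factor $e^{-(n+1)\eta\log\abs z}$ equals $|z|^{-2(n+1)}$ in the Euclidean norm, a pole of high enough order to force $u(q)=0$ (the pole-order check is exactly that in the cited proof). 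Therefore $\sigma:=s_0-u$ is holomorphic, i.e.\ $\sigma\in H^0(X,mK_X)$, and $\sigma(q)=e^{\otimes m}(q)-u(q)=e^{\otimes m}(q)\neq 0$, as required.

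The substantive point --- and essentially the only thing that is not bookkeeping --- is the curvature inequality $\ric(h)+\ric(\omega(t))\ge\tfrac m4\omega(t)$ on all of $X$, which is where the K\"ahler--Ricci flow genuinely enters: $\chi$ is merely nef, whereas $\tfrac m2\ddbar\varphi(t)=\tfrac m2\bigl(\omega(t)-\chi-e^{-t}(\omega_0-\chi)\bigr)$ supplies the genuinely positive term $\tfrac m2\omega(t)$; this is exactly the estimate proved in establishing $\mathcal S_X\subset\mathcal S$, so no new difficulty arises. The one new wrinkle relative to that proof is that $q\in D$, so the support of $\tau$ (an annulus around $q$) meets $D$, which forces $\epsilon$ to be small enough that $|\sigma_D|_{h_D}^{-2\epsilon}$ is integrable there; this is harmless because the Kodaira divisor $D$ may be chosen with $K_X-\epsilon D$ ample for all small $\epsilon>0$, so the constants can be fixed in the order ``$m$ large, then $\epsilon$ small, then $h_D$, then $t$ large''.
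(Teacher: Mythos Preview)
Your direct $L^2$ construction at $q$ has a real gap, and it is precisely where you claim ``no new difficulty arises'': the curvature inequality $\ric(h)+\ric(\omega(t))\ge\tfrac m4\,\omega(t)$ is \emph{not} the estimate proved in the $\mathcal S_X\subset\mathcal S$ argument. There, the holomorphic coordinates $z^{(k)}$ were produced via pseudo-locality precisely so that the Euclidean form $\sum_\alpha i\,dz^{(k)}_\alpha\wedge d\bar z^{(k)}_\alpha$ is uniformly equivalent to $\omega(t_k+\varepsilon_0)$ on the ball; this is what yields $\ddbar\bigl((n{+}1)\eta\log|z^{(k)}|^2\bigr)\ge-\Lambda\,\omega(t_k+\varepsilon_0)$ with $\Lambda$ \emph{independent of $k$}, so that $m\ge 10\Lambda$ can be fixed once. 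In your setup the coordinates are fixed ambient coordinates at $q\in D$, and on the annulus you only get $\ddbar\bigl((n{+}1)\eta\log|z|^2\bigr)\ge-C_0\,\omega_0$. To absorb this into $\tfrac m2\,\omega(t)$ you would need $\omega(t)\ge c\,\omega_0$ on an annulus that meets $D$, and no such lower bound is available; nor can the ``fixed K\"ahler'' term $\bigl(\tfrac m2-1\bigr)\chi-\epsilon\,\ric(h_D)$ help, because $K_X$ is only nef so $\chi$ need not be semipositive, and with $\epsilon$ small and $m$ large this form can be arbitrarily negative. Your ordering ``$m$ large, then $\epsilon$ small, then $t$ large'' is therefore circular: the $\Lambda$ that determines $m$ depends on a lower bound for $\omega(t)$ near $D$, which you cannot fix without first choosing $t$.

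The paper takes an entirely different route. It first passes to the Gromov--Hausdorff limit point $q_\infty$ of $q$ (which exists by Corollary~\ref{corollary finite}), and invokes \cite{S2} --- the partial $C^0$ estimate on the limit space --- to produce $\sigma\in H^0(X,mK_X)$ with $|\sigma|^2_{h_{KE}^m}(q_\infty)>1$. Non-vanishing at $q$ is then deduced by approaching $q$ along the transversal sequence $q_k\in\mathcal R_X$ of Lemma~\ref{finite distance}: continuity of $|\sigma|^2_{h_{KE}^m}$ gives $e^{-m\varphi_{KE}}|\sigma|^2_{h_\chi^m}(q_k)\ge\tfrac12$, and combining this with the lower bound $\varphi_{KE}\ge\delta\log|\sigma_D|^2_{h_D}-C_\delta$ (valid for every $\delta>0$) yields $|\sigma_D|^{2\delta}_{h_D}(q_k)\le C\,|\sigma|^2_{h_\chi^m}(q_k)$. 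Since $q_k\to q$ transversally to $D$ and $\delta$ is arbitrarily small, $\sigma$ cannot vanish at $q$. Thus the H\"ormander/Donaldson--Sun work is done at $q_\infty$, where the tangent-cone structure is available, and the entire machinery of Section~4 (the blow-up estimate \eqref{eqn:bb} and Lemma~\ref{finite distance}) is what carries the conclusion back to $q$.
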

\begin{proof}
By Corollary \ref{corollary finite}, we can take $q_\infty$ as the limit point of $q$. By \cite{S2}, there exists a $\sigma\in H^0(X, mK_{X})$ such that \begin{equation*}
|\sigma|_{h_{KE}^m}^2(q_\infty)>1,
\end{equation*}
where $|\sigma|_{h_{KE}^m}^2$ is a Lipschitz continuous function on $X_\infty$. 

By Lemma \ref{finite distance}, there exists a sequence of points $\{q_k\}\subset\mathcal R_X$ which transversely tend to $q$ and 
\begin{equation*}
d_{g(t_i)}(q,q_k)\le k^{-1},\quad \forall i.
\end{equation*} 
We may assume $q_k\in\mathcal R_X$ converge to the same point $q_k\in\mathcal R = \mathcal R_X$. Hence
\begin{equation*}
d_\infty(q_\infty, q_k) = \lim_{i\to\infty} d_{g(t_i)}(q, q_k)\le k^{-1}.
\end{equation*} 
By the continuity of $|\sigma|_{h_{KE}^m}^2$, for $k$ large enough, $$e^{-m\varphi_{KE}}|\sigma|_{h_\chi^m}^2(q_k) = |\sigma|_{h_{KE}^m}^2(q_k)\ge \frac 1 2,$$
i.e. $$\varphi_{KE}(q_k)\le C +C \log \abs{\sigma}_{h_\chi^m}(q_k).$$
On the other hand, for any $\delta\in (0,1)$, we have
\begin{equation*}
\delta \log \abs{\sigma_D}_{h_D} (q_k) - C_\delta\le \varphi_{KE}(q_k),
\end{equation*}
hence
\begin{equation*}
|\sigma_D|_{h_D}^{2\delta}(q_k)\le C |\sigma|_{h_\chi^m}^2(q_k).
\end{equation*}
Since $q_k$ approaches $D$ transversely, and $\delta$ is any arbitrarily small number, we see that $\sigma$ cannot vanish at $q$. Thus complete the proof.
\end{proof}

By a compactness argument and the previous proposition, we have:

\begin{prop}\label{prop 4.4}
There exists an integer $m\in \mathbb Z_+$ such that for any $q\in X$, there exists a holomorphic section $\sigma\in H^0(X,mK_X)$ such that $\sigma(q)\neq 0$, i.e., $mK_X$ is base point free. Thus a basis $\{\sigma_0,\ldots,\sigma_{N_m}\}$ of $H^0(X,mK_X)$ gives a morphism 
\begin{equation*}
\Phi_m: X\to X_{can}\subset\mathbb{CP}^{N_m},
\end{equation*}
where $X_{can}$ is the image of $X$ under $\Phi_m$.
\end{prop}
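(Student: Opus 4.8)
The plan is to upgrade the pointwise base-point-freeness already established into a uniform statement by a soft compactness argument. For $k\in\mathbb Z_+$ write $\mathrm{Bs}|kK_X| := \{q\in X : \sigma(q)=0 \text{ for every } \sigma\in H^0(X,kK_X)\}$ for the base locus of the linear system $|kK_X|$. Since $H^0(X,kK_X)$ is finite dimensional, $\mathrm{Bs}|kK_X|$ is the common zero set of a finite basis $\{\sigma_0,\dots,\sigma_{N_k}\}$, hence a closed analytic subset of $X$, so that $U_k := X\setminus\mathrm{Bs}|kK_X|$ is open. Moreover, if $\sigma\in H^0(X,kK_X)$ satisfies $\sigma(q)\neq 0$, then $\sigma^j\in H^0(X,jkK_X)$ satisfies $\sigma^j(q)\neq 0$, so $U_k\subseteq U_{jk}$ for every $j\ge 1$ (equivalently, $\mathrm{Bs}|jkK_X|\subseteq\mathrm{Bs}|kK_X|$).

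First I would check that the open sets $U_k$ already cover $X$ pointwise. If $q\in X\setminus D$, then $q\in\mathcal R_X$ by Section 3, and the definition of $\mathcal R_X$ (applied to the multi-index $\mu=0$) says precisely that the evaluation of $H^0(X,mK_X)$ at $q$ is onto $(mK_X)_q$ for some $m=m(q)$, i.e. some section of $mK_X$ does not vanish at $q$; thus $q\in U_{m(q)}$. If $q\in D$, the previous proposition supplies $m=m(q)\in\mathbb Z_+$ and $\sigma\in H^0(X,mK_X)$ with $\sigma(q)\neq 0$, so again $q\in U_{m(q)}$. Hence $\{U_{m(q)}\}_{q\in X}$ is an open cover of $X$.

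Then by compactness of $X$ there are finitely many points $q_1,\dots,q_N$ with $X=\bigcup_{i=1}^N U_{m(q_i)}$. Put $m:=\mathrm{lcm}(m(q_1),\dots,m(q_N))$. By the monotonicity noted above, $U_{m(q_i)}\subseteq U_m$ for each $i$, so $X=\bigcup_i U_{m(q_i)}\subseteq U_m$, that is $\mathrm{Bs}|mK_X|=\emptyset$: at every point of $X$ there is a section of $mK_X$ not vanishing there. A basis $\{\sigma_0,\dots,\sigma_{N_m}\}$ of $H^0(X,mK_X)$ then has no common zero, hence defines a morphism $\Phi_m:X\to\mathbb{CP}^{N_m}$, and one sets $X_{can}:=\Phi_m(X)$.

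I do not expect a genuine obstacle in this step: all the real content sits in the inputs (pointwise freeness on $D$ from the previous proposition, and on $X\setminus D$ from the identification of the regular set $\mathcal R_X$), while the present argument is just the passage from "pointwise" to "uniform" — compactness, or equivalently the descending chain condition applied to the nested closed sets $\mathrm{Bs}|k!\,K_X|$. The only point meriting a line of care is that base loci are honestly closed, which is immediate from finite-dimensionality of $H^0(X,kK_X)$.
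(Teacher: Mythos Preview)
Your argument is correct and is precisely the compactness argument the paper has in mind: the paper's own proof consists only of the line ``By a compactness argument and the previous proposition, we have,'' and you have simply written out the details (pointwise freeness on $X\setminus D$ from $X\setminus D\subset\mathcal R_X$, on $D$ from the preceding proposition, then finite subcover and lcm).
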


\begin{remark}
Proposition \ref{prop 4.4} is well-known from Kawamata's base point free theorem. It follows from algebraic geometry \cite{L} that  when $mK_X$ is base point free the maps $\Phi_m$ stabilize when $m$ is sufficiently large, i.e., $\Phi_m$ is independent of $m$ when $m$ is large enough and we will denote this map by $\Phi$. 
\end{remark}

For the given basis $\{\sigma_0,\ldots,\sigma_N\}$ of $H^0(X, mK_X)$, we have
\begin{equation*}
\sum_{i=0}^N |\sigma_i|^2_{h_t^m} = \sum_{i=0}^N |\sigma_i|^2_{h_\chi^m} e^{-\varphi-\dot\varphi}\ge c_0\sum_{i=0}^N |\sigma_i|_{h_\chi^m}\ge c_1>0.
\end{equation*}
Moreover, by Proposition \ref{prop:1}, we know

\begin{equation*}
\sup_X \sum_{i=1}^N |\sigma_i|_{h_t^m}^2\le C,\quad\sup_X |\nabla_{\omega(t)}\sigma_i|_{h_t^m}^2\le C, \forall i, ~~\forall t\ge 0, 
\end{equation*}
thus the map 
\begin{equation*}
\Phi_i: (X,\omega(t_i))\to (X_{can}, \omega_{FS}), \quad x\mapsto [\sigma_0(x):\ldots:\sigma_N(x)]\in \mathbb {CP}^N 
\end{equation*}
has uniformly bounded derivatives (see e.g. \cite{DS}). Since the target space $(X_{can},\omega_{FS})$ is compact, by Arzela-Ascoli theorem, the map extends to the Gromov-Hausdorff limit space
\begin{equation*}
\Phi_\infty: (X_\infty, d_\infty)\to (X_{can}, \omega_{FS}),
\end{equation*}
which is Lipschitz continuous.

Under our assumption that the Ricci curvature is uniformly bounded below, Tian-Wang's theory (\cite{TiWa}) on the structure of limit of almost K\"ahler Einstein manifolds implies that the singular set is closed and of Hausdorff codimension at least $4$, which also implies that any tangent cone in the limit space is good (see \cite{DS}), in the sense that there exists a tangent cone $C(Y)$, such that for any $\eta>0$ there exists a cut-off function $\beta$ which is $1$ on a small neighborhood of the singular set $S_Y\subset Y$, and vanishes outside the $\eta$-neighborhood of $S_Y$, and $\|\nabla \beta\|_{L^2(Y)}\le \eta$, then following Donaldson-Sun's idea (\cite{DS}) on partial $C^0$ estimates (see also \cite{Ti}),  by similar arguments as in \cite{S1}, for any two distinct points $p,q\in X_\infty$, one can construct two holomorphic sections $\sigma_1,\sigma_2\in H^0(X_\infty,m K_{X_\infty})$ which separate $p,q$, hence we have 

\begin{prop}\cite{S1}
$\Phi_\infty$ is injective.
\end{prop}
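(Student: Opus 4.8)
The proposition follows formally from the separation property established in the paragraph preceding it, once one knows it in the following concrete form (to be proved below): for every $x\in X_\infty$ and every $\varepsilon>0$ there is a section $\sigma_x\in H^0(X,mK_X)$ with $|\sigma_x|^2_{h_\infty^m}(x)\ge c_0>0$, $c_0$ uniform, and $|\sigma_x|^2_{h_\infty^m}\le\varepsilon$ outside the ball $B_{d_\infty}(x,\varepsilon)$. To set up the deduction I would fix once and for all an integer $m$ lying simultaneously in the base-point-free range of Proposition~\ref{prop 4.4}, in the stable range where $\Phi_m=\Phi$ is independent of $m$, and with $m>K$ (the Ricci lower bound constant of Theorem~\ref{thm:1}), and I would fix the basis $\{\sigma_0,\dots,\sigma_N\}$ of $H^0(X,mK_X)$ defining $\Phi_m$, hence $\Phi_\infty$. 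By Proposition~\ref{prop:1} and \eqref{eqn:bd}, for any $\sigma\in H^0(X,mK_X)$ the functions $|\sigma|^2_{h_t^m}$ are uniformly bounded in $C^1$, so along \eqref{eqn:GH} they converge to a Lipschitz function $|\sigma|^2_{h_\infty^m}$ on $X_\infty$; and $c_1\le\sum_j|\sigma_j|^2_{h_\infty^m}\le C$ on $X_\infty$, as recorded just before the statement.

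The deduction is then a short limiting argument. Suppose $p\neq q$ in $X_\infty$, and apply the peak-section statement at $x=p$ with $\varepsilon$ so small that $q\notin B_{d_\infty}(p,\varepsilon)$ and $\varepsilon/c_1<c_0/C$. Writing $\sigma_p=\sum_k a_k\sigma_k$, let $\ell=\sum_k a_k x_k$ be the corresponding linear form on $\mathbb{CP}^N$. For any $y\in X_\infty$ and any $y_i\to y$ along \eqref{eqn:GH}, since $\Phi_{t_i}(y_i)=[\sigma_0(y_i):\cdots:\sigma_N(y_i)]$ one has
\[
\frac{|\ell(\Phi_\infty(y))|^2}{\|\Phi_\infty(y)\|^2}=\lim_i\frac{|\sigma_p(y_i)|^2_{h_{t_i}^m}}{\sum_k|\sigma_k(y_i)|^2_{h_{t_i}^m}}=\frac{|\sigma_p|^2_{h_\infty^m}(y)}{\sum_k|\sigma_k|^2_{h_\infty^m}(y)},
\]
the denominator lying between $c_1$ and $C$. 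Evaluating at $y=p$ and $y=q$ gives $|\ell(\Phi_\infty(p))|^2/\|\Phi_\infty(p)\|^2\le\varepsilon/c_1<c_0/C\le|\ell(\Phi_\infty(q))|^2/\|\Phi_\infty(q)\|^2$, so one and the same point of $\mathbb{CP}^N$ cannot be both $\Phi_\infty(p)$ and $\Phi_\infty(q)$. Hence $\Phi_\infty(p)\neq\Phi_\infty(q)$, i.e.\ $\Phi_\infty$ is injective.

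It remains to prove the peak-section statement, and this is where the real work lies; I would follow the Donaldson-Sun partial $C^0$ estimate \cite{DS} (see also \cite{Ti}) as in \cite{S1}. Given $x\in X_\infty$ and $\varepsilon>0$, choose a tangent cone $C(Y)$ at $x$; it is good because, by Tian-Wang's structure theory \cite{TiWa}, the singular set $\mathcal S$ is closed of Hausdorff codimension at least $4$ and therefore carries cutoff functions $\beta$, equal to $1$ near $\mathcal S$, supported in an $\eta$-neighborhood, with $\|\nabla\beta\|_{L^2}\le\eta$. Build a model holomorphic peak section on a rescaled ball around $x$, truncate it with $\beta$ and a large-radius cutoff to obtain an $mK_X$-valued $(0,1)$-form whose $\bar\partial$ is $L^2$-small, and solve $\bar\partial u=(\text{error})$ by Theorem~\ref{thm:L2}: the hypothesis $\ric(h)+\ric(\omega(t_i))\ge\delta\omega(t_i)$ holds for a singular Hermitian metric $h$ on $mK_X$ of the same type used in the proof that $\mathcal S_X\subset\mathcal S$, precisely because $m>K$ and the Ricci curvature is bounded below along the flow. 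The corrected section $\sigma_x=(\text{peak section})-u$ is genuinely holomorphic, and gradient and mean-value estimates (as in Proposition~\ref{prop:1}) turn the $L^2$ control into the stated pointwise bounds. The two delicate points are verifying that every tangent cone of $X_\infty$ is good (which rests on the codimension-$4$ bound for $\mathcal S$) and matching the model peak section to the flow along the Gromov-Hausdorff convergence with enough quantitative control (uniform curvature positivity, locally smooth convergence on the regular set, gradient estimates) that, after solving $\bar\partial$, the resulting holomorphic section is still peaked at $x$ and negligible away from it.
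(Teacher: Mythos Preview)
Your overall strategy matches the paper's: both invoke the Donaldson--Sun partial $C^0$ argument (\cite{DS,Ti}) via good tangent cones, as carried out in \cite{S1}, to produce sections separating any two points of $X_\infty$. The paper gives no more detail than the paragraph preceding the proposition, so in spirit you are on the same track.

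However, your peak-section statement as formulated is impossible for a \emph{fixed} $m$. You claim that for every $\varepsilon>0$ there is $\sigma_x\in H^0(X,mK_X)$ with $|\sigma_x|^2_{h_\infty^m}(x)\ge c_0$ yet $|\sigma_x|^2_{h_\infty^m}\le\varepsilon$ at distance $\varepsilon$ from $x$. But by Proposition~\ref{prop:1} and \eqref{eqn:bd} the functions $|\sigma|^2_{h_\infty^m}$ are uniformly Lipschitz with a constant depending only on $m$; hence $|\sigma_x|^2_{h_\infty^m}$ cannot drop from $c_0$ to $\varepsilon$ over distance $\varepsilon$ once $\varepsilon<c_0/(2C_m)$, for any section of the fixed line bundle $mK_X$. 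Equivalently, $H^0(X,mK_X)$ is finite-dimensional, so its sections cannot be arbitrarily concentrated. Your deduction then writes $\sigma_p=\sum_k a_k\sigma_k$ in the fixed basis of $H^0(X,mK_X)$, so the issue is not merely cosmetic.

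The standard fix is to let the power depend on the pair $(p,q)$. Given $p\neq q$ at $d_\infty$-distance $d>0$, rescale by a large multiple $m'$ of $m$ so that the Donaldson--Sun construction yields $\sigma\in H^0(X,m'K_X)$ peaked at $p$ and small at $q$; this is precisely the ``rescaled ball'' you allude to. This shows $\Phi_{m',\infty}(p)\neq\Phi_{m',\infty}(q)$. Since $m$ is already in the stable range, $\Phi_{m'}$ factors through $\Phi_m$ via a Veronese-type morphism $\nu:\mathbb{CP}^{N_m}\dashrightarrow\mathbb{CP}^{N_{m'}}$, and by continuity the same factorisation passes to the Gromov--Hausdorff limits: $\Phi_{m',\infty}=\nu\circ\Phi_{m,\infty}$ on $X_\infty$. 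Hence $\Phi_{m,\infty}(p)\neq\Phi_{m,\infty}(q)$, which is what you want.

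A minor slip: in your displayed inequality you have $p$ and $q$ interchanged. Since $\sigma_p$ is large at $p$ and small at $q$, the ratio $|\ell|^2/\|\cdot\|^2$ is $\ge c_0/C$ at $\Phi_\infty(p)$ and $\le\varepsilon/c_1$ at $\Phi_\infty(q)$, not the other way around.
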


\section{Proof of Theorems}
\begin{proof}[Proof of Theorem \ref{thm:1}]
To prove Theorem \ref{thm:1}, we will argue by contradiction. Following the ideas in  \cite{S2}, we need the following lemma:
\begin{lemma}\label{cont lemma}
Suppose $\mathrm{diam}(X,g(t_i))\to \infty$, then we have $\mathrm{diam}(X_\infty, d_\infty) = \infty$, 
\begin{enumerate}[label=(\arabic*)]
\item $\Phi_\infty: (X_\infty, d_\infty)\to (X_{can},\omega_{FS})$ is not surjective;
\item For $p\in X_{can}\backslash \Phi_\infty(X_\infty)$ and any sequence of points $q_j\in X_\infty$ with $d_{\omega_{FS}} (\Phi_\infty(q_j), p)\to 0$ as $j\to \infty$, we have
$$d_\infty(p_\infty, q_j)\to \infty.$$
\end{enumerate}
\end{lemma}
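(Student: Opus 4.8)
\emph{Step 1: $\mathrm{diam}(X_\infty,d_\infty)=\infty$.} First I would prove this using only the pointed Gromov--Hausdorff convergence \eqref{eqn:GH} and the connectedness of $X$. If $\mathrm{diam}(X_\infty,d_\infty)=D_0<\infty$, then $B_{d_\infty}(p_\infty,D_0+1)=X_\infty$, so by the definition of pointed Gromov--Hausdorff convergence $B_{g(t_i)}(p,D_0+1)$ is $\epsilon_i$-close to $X_\infty$ with $\epsilon_i\to 0$, and hence has $g(t_i)$-diameter $\le D_0+\epsilon_i$. But $\mathrm{diam}(X,g(t_i))\to\infty$ yields, for $i$ large, a point at $g(t_i)$-distance $>D_0+1$ from $p$; joining it to $p$ by a path in the connected manifold $X$ and applying the intermediate value theorem to the distance-to-$p$ function produces a point $y_i$ with $d_{g(t_i)}(p,y_i)=D_0+\tfrac12$, which lies in $B_{g(t_i)}(p,D_0+1)$ yet has distance $D_0+\tfrac12>D_0+\epsilon_i$ to $p$ once $\epsilon_i<\tfrac12$ --- a contradiction.

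\emph{Step 2: $\Phi_\infty$ is not surjective.} I would argue by contradiction, assuming $\Phi_\infty$ onto. By Step 1 and connectedness there is $z_i\in X$ with $d_{g(t_i)}(p,z_i)\to\infty$, and after passing to a subsequence $\Phi(z_i)\to p^\ast\in X_{can}$; the claim is that $p^\ast\notin\Phi_\infty(X_\infty)$. First, $p^\ast$ cannot lie in $X_{can}\setminus\Phi(D)$: there $\Phi$ is a biholomorphism (the non-ample locus of $K_X$ being contained in $D$), so $\Phi^{-1}(p^\ast)=\{x^\ast\}\subset X\setminus D$ and $z_i$ eventually lies in a fixed neighborhood $V\subset\subset X\setminus D$ of $x^\ast$; since $\omega(t_i)\to\omega_\infty$ in $C^\infty_{loc}(X\setminus D)$, both $\mathrm{diam}_{g(t_i)}(\overline V)$ and the $g(t_i)$-length of a fixed path from $p$ to $x^\ast$ in $X\setminus D$ stay bounded, forcing $d_{g(t_i)}(p,z_i)$ bounded --- impossible. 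Hence $p^\ast\in\Phi(D)$, and $F:=\Phi^{-1}(p^\ast)$ is a compact connected subvariety of $D$. Now suppose $p^\ast=\Phi_\infty(q)$; write $q=\lim y_i$ in the sense of \eqref{eqn:GH} with $d_{g(t_i)}(p,y_i)\le R_0$, so $\Phi(y_i)\to p^\ast$. For a fixed small $\epsilon_0>0$, both $z_i$ and $y_i$ then lie in $\Phi^{-1}(B_{\omega_{FS}}(p^\ast,\epsilon_0))$ for $i$ large; granting that this set has $g(t_i)$-diameter bounded uniformly in $i$ (Step 3), we get $d_{g(t_i)}(z_i,y_i)\le C(\epsilon_0)$, contradicting $d_{g(t_i)}(z_i,y_i)\ge d_{g(t_i)}(p,z_i)-R_0\to\infty$. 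Thus $p^\ast\notin\Phi_\infty(X_\infty)$, so $\Phi_\infty$ is not onto.

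\emph{Step 3: the uniform diameter bound near $\Phi(D)$ --- the main obstacle.} This is the technical heart of the argument. Since $\Phi$ is proper, for $\epsilon_0$ small $\Phi^{-1}(B_{\omega_{FS}}(p^\ast,\epsilon_0))$ lies in an arbitrarily small neighborhood of $F\subset D$, which can be covered by finitely many charts $\tilde B_O$ of the kind used in Section 4. On each such chart the uniform estimate $\pi^\ast\omega(t)\le C|\sigma_E|_{h_E}^{-2(1-\delta)}|w|^{-2\lambda}\tilde\omega$ from \eqref{eqn:bb} bounds the $g(t)$-length of curves transverse to $E\cup\{w=0\}$, precisely because the exponent $1-\delta<1$ makes $|\sigma_E|_{h_E}^{-(1-\delta)}$ integrable along such curves --- the same mechanism already exploited in Lemma \ref{finite distance}. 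Chaining finitely many such transverse arcs with a path through a fixed compact subset of $X\setminus D$ back to $p$ bounds $\sup_a d_{g(t_i)}(p,a)$ over the neighborhood of $F$, uniformly in $i$, hence its $g(t_i)$-diameter. I expect this uniform control of distances near $D$ along the whole flow to be the crux; the sub-unit exponent $1-\delta$, which ultimately comes from the Kodaira-lemma divisor making $K_X-\varepsilon D$ ample, is exactly what makes the path lengths converge.

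\emph{Step 4: divergence in (2).} Finally, given $p\in X_{can}\setminus\Phi_\infty(X_\infty)$ and $q_j\in X_\infty$ with $d_{\omega_{FS}}(\Phi_\infty(q_j),p)\to 0$: if $d_\infty(p_\infty,q_j)$ did not tend to $\infty$, then along a subsequence $q_j\in\overline{B_{d_\infty}(p_\infty,R)}$, a compact set since $(X_\infty,d_\infty)$ --- a Gromov--Hausdorff limit of manifolds with a uniform lower Ricci bound --- is proper. Passing to a convergent subsequence $q_j\to q_\ast$ and using the continuity of $\Phi_\infty$ gives $\Phi_\infty(q_\ast)=\lim\Phi_\infty(q_j)=p\in\Phi_\infty(X_\infty)$, a contradiction; hence $d_\infty(p_\infty,q_j)\to\infty$.
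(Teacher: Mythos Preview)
Your Steps 1 and 4 are fine and essentially coincide with the paper's treatment (the paper takes the unboundedness of $\mathrm{diam}(X_\infty,d_\infty)$ for granted, and its proof of part (2) is exactly your compactness-plus-continuity argument).

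The real divergence is in Steps 2--3, and there is a genuine gap. The paper proves part (1) \emph{entirely inside the limit space} $X_\infty$: assuming $\Phi_\infty$ surjective, it takes $q_j\in\mathcal R$ with $d_\infty(p_\infty,q_j)\to\infty$, passes to $\Phi_\infty(q_j)\to q'_\infty\in X_{can}$, lifts to $q_\infty\in X_\infty$ with $\Phi_\infty(q_\infty)=q'_\infty$, and then --- using the \emph{injectivity} of $\Phi_\infty$ (from the partial $C^0$ estimate, Proposition at the end of Section 4) --- shows all but finitely many $q_j$ lie in $B_{d_\infty}(q_\infty,1)$, a contradiction. The key input is injectivity of $\Phi_\infty$, not any metric estimate on $(X,g(t_i))$.

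Your route instead requires (Step 3) that $\Phi^{-1}(B_{\omega_{FS}}(p^\ast,\epsilon_0))$, a full open neighborhood of a fiber $F\subset D$, has uniformly bounded $g(t_i)$-diameter. The estimate \eqref{eqn:bb} does not give this. In a chart $\tilde B_O$ it reads $\pi^\ast\omega(t)\le C\,|\sigma_E|_{h_E}^{-2(1-\delta)}|w|^{-2\lambda}\tilde\omega$; the exponent $1-\delta<1$ makes the $\sigma_E$-factor integrable along curves transverse to $E$, but there is no control on $\lambda$, so the $|w|^{-2\lambda}$ factor blows up along the proper transform of $D$. Consequently the paper's estimates yield only Lemma \ref{finite distance} and Corollary \ref{corollary finite}: for each \emph{fixed} $q\in D$ one specific transverse curve into $q$ has bounded $g(t)$-length. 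They do \emph{not} bound the $g(t)$-distance from $p$ to an arbitrary point $z_i$ lying merely in a tubular neighborhood of $F$; if $z_i$ approaches $F$ in a direction close to $D$, the available bound degenerates. Covering $F$ by finitely many charts $\tilde B_O$ does not help, since each chart handles only the single point $\pi_1(O)$ and loses control near $\{w=0\}$, which is precisely where neighboring points of $D$ sit.

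More tellingly, if your Step 3 were available (and the argument would have to apply to any fiber $F\subset D$, since $p^\ast$ is produced from an arbitrary diverging sequence $z_i$), then by compactness of $D$ one would conclude $\sup_{x\in X}d_{g(t)}(p,x)<\infty$ directly --- that is Theorem \ref{thm:1} itself, and Lemma \ref{cont lemma} would be superfluous. The paper's detour through injectivity of $\Phi_\infty$ is there precisely because this uniform neighborhood bound is not available from Section 4.
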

\begin{proof}
(1) Suppose $\Phi_\infty$ is surjective. Since $\mathrm{diam}(X_\infty, d_\infty) = \infty$, there exists a sequence of points $q_j\in \mathcal R\subset X_\infty$ with $d_\infty(p_\infty, q_j)\to \infty$. $(X_{can},\omega_{FS})$ is a compact metric space, hence there exists a convergent subsequence of $\{\Phi_\infty(q_j)\}$ which converge to some $q'_\infty\in X_{can}$ with respect to the metric $\omega_{FS}$. Then there is a point $q_\infty\in X_\infty$ such that $\Phi_\infty(q_\infty) = q'_\infty$. We claim that the ball $B_{d_\infty}(q_\infty,1)$ contains all but finitely many $q_j$'s. Assuming this claim, we get that the distance of $q_j$ and $p_\infty$ is bounded by $d_\infty(p_\infty,q_\infty)+1$ contradicting the choice of $q_j$ which converge to $\infty$ under $d_\infty$ as $j\to\infty$. To see the claim, suppose not, there exists a subsequence $q_{j_l}\subset \{q_j\}$ such that $d_\infty(x_{j_l},q_{j_l})\ge 1$, where $x_j$ is a sequence of points contained in  $ \mathcal R$ which converge to $q_\infty$ under the metric $d_\infty$. Since $\Phi_\infty(q_{j_l})$ and $\Phi_\infty(x_{j_l})$ are both in $X_{can}^{reg}$ which is connected and these points both converge to $q_\infty'$, so we can choose a curve $\gamma_{j_{l}}\subset X_{can}^{reg}$ whose length under $\omega_{FS}$ tend to $0$ as $j_l\to\infty$. Then $\Phi_\infty^{-1}(\gamma_{j_l})\subset \mathcal R\subset X_\infty$ is a connected curve connecting $x_{j_l}$ and $q_{j_l}$ which has  $d_\infty$-length greater than $1$, so we can take a point $y_{j_l}\in \Phi_\infty^{-1}(\gamma_{j_l})$ such that $1/2\le d_\infty(x_{j_l},y_{j_l})\le 1$. Then by compactness we may assume that up to a subsequence $y_{j_l}$ converge to a point $y_\infty\in X_\infty$ which satisfies $1/2\le d_\infty(q_\infty,y_\infty)\le 1$. It's not hard to see by triangle inequality that $d_{\omega_{FS}}(\Phi_\infty(y_{j_l}), q_\infty')\to 0$, hence $d_{\omega_{FS}}(\Phi_\infty(y_\infty),q_\infty') = 0$ and $\Phi_\infty(y_\infty) = q_\infty'$, and this contradicts the property that $\Phi_\infty$ is injective. Hence we prove the claim.

%Since the map $\Phi_\infty$ is injective,  an open neighborhood of $q_\infty$ contains all but finitely many $q_j$'s, and this would contradict $d_\infty(p_\infty,q_j)\to \infty$.

(2) Suppose $d_\infty(p_\infty, q_j)\le A$ for some constant $A>0$. By compactness we can assume a subsequence of $q_j$ converges to a point $q_\infty$, with $d_\infty(p_\infty,q_\infty)\le A$.

Then we have $d_{\omega_{FS}}(p,\Phi_\infty(q_j))\to d_{\omega_{FS}} (p, \Phi_\infty(q_\infty)) = 0$ as $j\to \infty$, thus $p = \Phi_\infty(q_\infty)$ and this contradicts the choice of $p$.
\end{proof}

Since $\Phi_\infty$ is not surjective, there exists a $q'\in X_{can}\backslash \Phi_\infty(X_\infty)$.  Consider a point $q\in D\subset X$ with $\Phi(q) = q'$, for a sequence of points $\{q_j\}\subset \mathcal R_X=\mathcal R$ in the path constructed in Lemma \ref{finite distance} with $q_j\to q$, i.e. $d_{\omega_{FS}}(q',\Phi_\infty(q_j))\to 0$, we have 
\begin{equation*}
\sup_j d_\infty(p_\infty,q_j)<\infty.
\end{equation*}
This contradicts item (2) in Lemma \ref{cont lemma}. Hence the diameter of $(X,g(t_i))$ is uniformly bounded. And we finish the proof of Theorem \ref{thm:1}
\end{proof}

\begin{proof}[Proof of Corollary \ref{cor:1}]We will show $\Phi_\infty: X_\infty \to X_{can}$ is surjective. Suppose not, there is $p\not\in \Phi_\infty(X_\infty)$. Since $\Phi_\infty(\mathcal R)$ is dense in $X_{can}$, there exists a sequence of points $q_j\in \mathcal R$ such that $d_{\omega_{FS}} (p,\Phi_\infty(q_j))\to 0$ as $j\to\infty$. We have shown diam$(X_\infty, d_\infty)$ is bounded, hence $q_j$ would converge to some point $q_\infty\in X_\infty$ under $d_\infty$. Hence
\begin{equation*}
d_{\omega_{FS}} (p,\Phi_\infty(q_\infty))\ =  \lim_{j\to \infty} d_{\omega_{FS}}(p,\Phi_\infty(q_j)) = 0,
\end{equation*}
and we conclude that $p = \Phi_\infty(q_\infty)$, and thus a contradiction. Hence $\Phi_\infty$ is surjective. Combining with Song's result that $\Phi_\infty$ is also injective, we see that $\Phi_\infty$ is a Lipschitz continuous homeomorphism of $(X_\infty,d_\infty)$ and $X_{can}$, since $(X_\infty,d_\infty)$ is a compact space. Moreover, $\Phi_\infty|_{\mathcal R}:(\mathcal R,d_\infty)\to (\mathcal R_X, g_\infty)$ is an isometry so $\Phi_\infty$ induces an isometry between $(X_\infty, d_\infty)$ and $\overline{(\mathcal R_X, g_\infty)} = (X_{can}, g_\infty)$. Hence the Gromov-Hausdroff limit of the K\"ahler Ricci flow \eqref{KRF} is the canonical model of $X$, with the limit metric of the flow, under the assumption of bounded Ricci curvature along the flow.
\end{proof}

\begin{proof}[Proof of Theorem \ref{TZ}]
Suppose the flow \eqref{KRF} is of Type III, i.e. $|Rm|(g(t))$ is uniformly bounded,  by Shi's derivative estimates all derivatives of $Rm$ are bounded. Fix a point $p\in X\backslash D$, for any sequence $t_i\to \infty$, by the smooth convergence of $\omega(t_i)$ on $X\backslash D$ (Lemma \ref{lemma 1}), the volumes of unit balls $B_{g(t_i)}(p,1)\subset (X,g(t_i),p)$ are bounded below by a uniform positive constant, the limit space $(X_\infty,d_\infty,p_\infty)$ is smooth. Hence $X_\infty = \mathcal R$ and $\mathcal S= \mathcal S_X = \emptyset$. Then $\mathcal R_X = X$, otherwise, if there exists $q\in X\backslash \mathcal R_X$, then by Corollary  \ref{corollary finite} we have $d_{g(t_i)}(p,q)\le C_q$ for any $t_i$ and a uniform constant $C_q$ depending only on $q$, hence $q$ must converge to some point $q_\infty\in X_\infty$ along the Gromov-Hausdorff convergence, by the definition of $\mathcal S_X$, $q\in \mathcal S_X\neq \emptyset$, thus a contradiction. So we have $X$ is a compact K\"ahler manifold admitting  a smooth K\"ahler Einstein metric $\omega_{KE}$ with $\ric(\omega_{KE}) = -\omega_{KE}$, hence $K_X$ is ample.
\end{proof}

\section*{Appendix}

In this appendix, we will show that along the K\"ahler-Ricci flow \eqref{KRF}, assume Ricci curvature is uniformly bounded below for all $t\ge 0$, then for any sequence $t_i\to \infty$, $(X,\omega(t_i),p)$ is a sequence of almost K\"ahler-Einstein manifolds in the sense of Tian-Wang (\cite{TiWa}), where $p\in X\backslash D$ is a fixed point. Recall a sequence of K\"ahler manifolds $(X_i, \omega_i,p_i)$ is called almost K\"ahler Einstein if the following conditions are satisfied.
\begin{enumerate}[label=(\arabic*)]
\item $\ric(\omega_i)\ge -\omega_i$
\item $Vol_{\omega_i}(B(p_i,r_0))\ge v_0>0$, for two fixed constants $r_0>0$ and $v_0$.
\item The flow $\frac{\partial}{\partial t}\omega = -\ric(\omega) + \lambda_i \omega$ has a solution $\omega(t)$ with $\omega(0) = \omega_i$ on $X_i\times [0,1]$, where $\lambda_i\in [-1,1]$ is a constant. Moreover, $\int_0^1\int_{X_i}|R(\omega(t)) - n\lambda_i|\omega(t)^n dt\to 0$ as $i\to \infty$. 
\end{enumerate}

We may assume $\ric(\omega(t))\ge - K$ for a constant $K>0$ (we may assume $K\ge 1$) and any $t\ge 0$. Let $\tilde \omega_i = K\omega(t_i)$, then $\ric(\tilde \omega_i)\ge -1$. Since $(X,\omega(t_i))$ is non-collapsed at the point $p\in X\backslash D$ due to the smooth convergence, we have $(X,\tilde \omega_i)$ is also non-collapsed at $p$, i.e., there exists $v_0>0$ such that $Vol_{\tilde \omega_i}(B_{\tilde \omega_i} (p,r_0))\ge v_0$ for some small $r_0>0$.

$\tilde \omega_i(t) := K\omega(t_i + K^{-1} t)$ with $t\in[0,1]$ satisfies the (normalized) K\"ahler Ricci flow equation
\begin{equation*}
\frac{\partial}{\partial t}\tilde \omega_i(t) = -\ric(\tilde \omega_i(t)) - K^{-1}\tilde \omega_i(t),
\end{equation*}
with the initial $\tilde \omega_i( 0 ) = \tilde \omega_i$. From the evolution equation for the scalar curvature $R(\omega(t))$ ($\omega(t)$ is the solution to \eqref{KRF}) 
\begin{equation*}
\frac{\partial}{\partial t} R = \Delta_{\omega(t)} R + \abs{\ric} + R,
\end{equation*}
by maximum principle, at the minimum point of $R(\omega(t))$ for each $t$, $R_{\min} = \min_X R(\omega(t))$, we have
\begin{equation*}
\frac{d}{dt}R_{\min}(t)\ge \abs{\ric} + R_{\min}(t)\ge \frac{R_{\min}(t)^2}{n} + R_{\min}(t).
\end{equation*}
Standard comparison theorem of ODE implies that
\begin{equation*}
R_{\min}(t)\ge - n -\frac{R_{\min}(0)n + n^2}{R_{\min}(0) e^t - R_{\min}(0) - n} \ge  -n - O(e^{-t}).
\end{equation*}
Hence for $t\in [0,1]$, we have
\begin{equation*}
R(\tilde \omega_i(t)) = K^{-1} R(\omega(t_i + K^{-1}t))\ge - K^{-1}n - O(e^{-t_i}).
\end{equation*}
Then
\begin{align*}
\int_0^1\int_X |R(\tilde \omega_i(t)) + K^{-1}n| \tilde\omega_i(t)^n dt &\le \int_0^1\int_X \big((R(\tilde \omega_i(t)) + K^{-1}n) + O(e^{-t_i})\big) \tilde\omega_i(t)^n dt \\ &
 = \int_0^1 \int_X n (\ric(\tilde \omega_i(t)) +K^{-1} \tilde \omega_i(t))\wedge \tilde \omega_i(t)^{n-1} dt + O(e^{-t_i})\\
 & = \int_0^1 \int_X n(e^{-t_i - K^{-1}t} (\omega_0 - \chi) - \ddbar \dot\varphi)\wedge \tilde\omega_i(t)^{n-1} dt + O(e^{-t_i})\\
 & = \int_0^1 \int_X n e^{-t_i - K^{-1}t} (\omega_0 - \chi)\wedge \tilde\omega_i(t)^{n-1} dt + O(e^{-t_i})\\
 &\le O(e^{-t_i})\to 0,\quad \text{as }t_i\to \infty.
\end{align*}

\end{document}